\documentclass[12pt]{amsart}
\usepackage[utf8]{inputenc}

\usepackage[boxsize=1em]{ytableau}
\usepackage{amsmath,amssymb,amsthm,mathtools,mathdots,nicefrac,tikz-cd,bbding,stmaryrd,float,multicol,caption,bbm, upgreek}
\usepackage[mathscr]{euscript}
\usepackage[margin=1in, marginparwidth=1.75cm]{geometry}
\usepackage{color}
\usepackage[all]{xy}

\usepackage[alphabetic]{amsrefs}
\usepackage{hyperref}
\hypersetup{colorlinks=true,citecolor=purple,linkcolor=purple}

\theoremstyle{definition}
\newtheorem{theorem}{Theorem}[section]

\theoremstyle{definition}
\newtheorem{corollary}[theorem]{Corollary}

\theoremstyle{definition}
\newtheorem{proposition}[theorem]{Proposition}

\theoremstyle{definition}
\newtheorem{lemma}[theorem]{Lemma}

\theoremstyle{definition}
\newtheorem{definition}[theorem]{Definition}

\theoremstyle{definition}
\newtheorem{example}[theorem]{Example}

\theoremstyle{definition}

\theoremstyle{definition}

\theoremstyle{definition}
\newtheorem{conjecture}[theorem]{Conjecture}

\theoremstyle{definition}

\newcommand{\bk}{\mathbf{k}}
\newcommand{\Sym}{\text{Sym}}

\newcommand{\Z}{\mathbf{Z}}

\newcommand{\GL}{\mathbf{GL}}

\newcommand{\coker}{\text{coker}\,}

\newcommand{\bfS}{\textbf{S}}

\renewcommand{\char}{\text{char}}

\DeclareMathOperator{\Tab}{Tab}

\newcommand{\interior}[1]{%
  {\kern0pt#1}^{\mathrm{o}}%
}

\title{Standard monomial theory modulo Frobenius in characteristic two}
\author{Laura Casabella}
\address{Max Planck Institute for Mathematics in the Sciences, Leipzig, Germany}
\email{\href{mailto:laura.casabella@mis.mpg.de}{laura.casabella@mis.mpg.de}}
 \urladdr{\url{https://sites.google.com/view/lauracasabella}}

\author{Teresa Yu}
\address{Department of Mathematics, University of Michigan, Ann Arbor, MI}
\email{\href{mailto:twyu@umich.edu}{twyu@umich.edu}}
\urladdr{\url{https://teresaxyu.github.io}}
\thanks{TY was supported by NSF grant DGE-1841052.}

\begin{document}
\maketitle

\begin{abstract}
    Over a field of characteristic two, we develop a theory of standard monomials for polynomial rings modulo a Frobenius power of the maximal ideal generated by all variables. As a result, we obtain a filtration by modular $\GL_n$-representations whose characters are given by particular truncated Schur polynomials, thus proving a conjecture by Gao--Raicu--VandeBogert in the characteristic two case. 
\end{abstract}

\section{Introduction}

There has been recent interest in studying positive characteristic analogues of classical results on flag varieties in characteristic zero, such as the Borel--Weil--Bott theorem on the cohomology of line bundles on the complete flag variety (see \cite{GRV23} for a survey). The study of flag varieties is connected to commutative algebra, representation theory, and combinatorics through standard monomial theory and determinantal ideals \cite{BCRV}. In this paper, we provide results towards developing an analogue of standard monomial theory in positive characteristic by considering determinantal ideals ``modulo Frobenius". 

Let $\bk$ be an algebraically closed field (for now, of arbitrary characteristic), and let $R=\bk[x_1,\ldots,x_n,y_1,\ldots,y_n]$ with bigrading $\deg(x_i)=(1,0),\deg(y_i)=(0,1)$. Let $I\subset R$ be the bihomogeneous ideal generated by the $2\times 2$ minors of the generic matrix
\[\begin{pmatrix} x_1 & x_2 & \cdots & x_n \\ y_1 & y_2 & \cdots & y_n\end{pmatrix}.\]
Then, $R$ has a $\bk$-basis consisting of products of minors of the matrix, and this basis is indexed by semistandard Young tableaux (SSYT) with entries in $[n]=\{1,\ldots,n\}$; the basis elements are called \emph{standard monomials}. In particular, for $a\geq b\geq d$, the $\bk$-vector space $(I^d/I^{d+1})_{(a,b)}$ has a basis indexed by semistandard Young tableaux of shape $(a+b-d,d)$. This basis and its properties are useful for studying many aspects of $I$ and $R/I$ \cite[Ch. 3]{BCRV}. For example, this $\bk$-vector space has an action of $\GL_n$, and since standard monomials are weight vectors, the character of this representation is
\begin{equation}\label{eqn:character char0}
    \left[(I^d/I^{d+1)})_{(a,b)}\right]=s_{(a+b-d,d)},
\end{equation}
where $s_{(a+b-d,d)}$ is the Schur polynomial in $n$ variables indexed by the partition $(a+b-d,d)$.

When $\char(\bk)=p>0$, one can work modulo a Frobenius power of the ideal generated by the variables of $R$, and consider the quotient ring
\[\overline{R}=R/( x_1^p,\ldots,x_n^p,y_1^p,\ldots,y_n^p),\]
and the ideal $\overline{I}=I\overline{R}$. In \cites{DW92,wal94}, the authors give modular analogues of Schur polynomials. The \emph{$p$-truncated complete symmetric polynomials} $h_d^{(p)}\in\Z[t_1,\ldots,t_n]$ are defined by
\[h_d^{(p)}=\sum_{\substack{i_1+\cdots+i_n=d\\ 0\leq i_j<p}} t_1^{i_1}\cdots t_n^{i_n},\]
and the \emph{$p$-truncated Schur polynomials} $s^{(p)}_{(a,b)}\in\Z[t_1,\ldots,t_n]$ are defined by
\[s^{(p)}_{(a,b)}=\det\begin{pmatrix}
    h^{(p)}_a & h^{(p)}_{a+1}\\ h^{(p)}_{b-1} & h^{(p)}_b
    \end{pmatrix}.\]

It is natural to ask whether the analogue of (\ref{eqn:character char0}) holds for $p$-truncated Schur polynomials. Although one can find immediate counterexamples (see for instance \cite[Section 6]{GRV23}), in \cite[Conjecture 6.1]{GRV23} the authors conjecture that the analogue does hold for $a-b \geq p-1$. In this paper, we give a complete answer to the question in the $p=2$ case by finding the character of $(\overline{I}^d/\overline{I}^{d+1})_{(a,b)}$  for all $a\geq b\geq d$.

\begin{theorem}[Theorem~\ref{thm:main}]
    Let $p=2$, and $a\geq b\geq d$. Then,
    \[\left[\left(\overline{I}^d/\overline{I}^{d+1}\right)_{(a,b)}\right]=\begin{cases}
    \sum_{j=1}^a(-1)^{j-1}s^{(2)}_{(a+j,a-j)}&\quad\text{if $a=b=d$},\\ \\
    s^{(2)}_{(a,a)}+\sum_{j=2}^a (-1)^j s^{(2)}_{(a+j,a-j)}&\quad\text{if $a-1=b-1=d$},\\ \\
    s^{(2)}_{(a+b-d,d)}&\quad\text{otherwise.}
    \end{cases}\]
\end{theorem}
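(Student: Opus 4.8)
The plan is to track the classical (characteristic-free) standard monomial basis of $R$ through the reduction modulo Frobenius and then compute the resulting $\GL_n$-characters. Classical standard monomial theory already gives, in every characteristic, that $(I^d/I^{d+1})_{(a,b)}$ has a basis of products of $d$ minors and $a+b-2d$ free variables indexed by SSYT of shape $(a+b-d,d)$, so the \emph{pre-Frobenius} character is the ordinary Schur polynomial $s_{(a+b-d,d)}$; the entire content of the theorem is to determine what this becomes after imposing $(x_i^2,y_i^2)$. The decisive characteristic-two input is that, beyond $x_i^2=y_i^2=0$, every minor is square-zero, $p_{ij}^2=(x_iy_j-x_jy_i)^2=0$, the cross term $2x_ix_jy_iy_j$ vanishing. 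Writing $V=\bk^n$ for the standard representation we have $\overline R_{(a,b)}\cong\Lambda^aV\otimes\Lambda^bV$ of character $e_ae_b$, and since for $p=2$ one has $h^{(2)}_k=e_k$, the Pieri rule supplies the organizing identity $e_ae_b=\sum_{j\ge0}s^{(2)}_{(a+b-j,j)}$. Thus each graded piece of $\overline R$ is a sum of $p$-truncated Schur characters, and I would first show that the images of the standard monomials with no repeated minor and no repeated free variable span each $(\overline I^d/\overline I^{d+1})_{(a,b)}$, reducing the problem to controlling their linear relations.

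The structural tool, available precisely because $p=2$, is that the box-moving operators $E=\sum_k x_k\partial_{y_k}$ and $F=\sum_k y_k\partial_{x_k}$ now descend to $\overline R$ (for instance $E(y_k^2)=2x_ky_k=0$), making $\overline R$ an $\mathfrak{sl}_2\times\GL_n$-module in which $\overline R_{(a,b)}$ has $\mathfrak{sl}_2$-weight $a-b$; moreover each minor is $\mathfrak{sl}_2$-invariant, $Ep_{ij}=Fp_{ij}=x_ix_j-x_jx_i=0$, so multiplication by minors is $\mathfrak{sl}_2$-equivariant and the $\overline I$-adic filtration refines the resulting skew-Howe isotypic structure. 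The expected generic value $s^{(2)}_{(a+b-d,d)}$ is exactly the character of the $\GL_n$-costandard module $\nabla(2^d,1^{a+b-2d})$ that this decomposition attaches to the $d$-th layer, and the generic part of the theorem is the statement that the minor-multiplication maps attain their maximal expected rank in characteristic two. I would prove this by realizing $(\overline I^d/\overline I^{d+1})_{(a,b)}$ as the cohomology of a characteristic-free complex assembled from $E$ and the minor-multiplication maps, verifying exactness away from the expected spot, and evaluating the surviving character by the Lindström--Gessel--Viennot determinant attached to the truncated two-row tableaux, which collapses to the single term $s^{(2)}_{(a+b-d,d)}=e_{a+b-d}e_d-e_{a+b-d+1}e_{d-1}$.

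The heart of the matter, and the main obstacle, is the two families $a=b=d$ and $a=b=d+1$, which are exactly the bidegrees of $\mathfrak{sl}_2$-weight zero sitting at the center of the shortest strings (spin $0$ and spin $1$). This is where characteristic two genuinely intervenes: the relevant $\mathbf{SL}_2$-Weyl modules are reducible at the zero weight, the skew-Howe filtration fails to split, and the maps $\Lambda^mW\to\Lambda^mV\otimes\Lambda^mV$, with $W=\langle p_{ij}\rangle$, drop rank relative to characteristic zero. The extra relations are the Plücker relations modulo Frobenius, the simplest being $p_{12}p_{34}+p_{13}p_{24}+p_{14}p_{23}=0$, and I would compute the resulting correction in two mutually checking ways: from the characters of Weyl and tilting modules for $\mathbf{SL}_2$ in characteristic two at weight zero, and by an inclusion--exclusion over the multiplicities of repeated columns in the spanning set, in which collapsing an $\ell$-fold repeated minor contributes $s^{(2)}_{(a+\ell,a-\ell)}$ with sign $(-1)^{\ell-1}$. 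The delicate bookkeeping is to show these two computations agree and terminate in exactly the stated alternating sums: that the boundary term where the second row empties enters with the correct sign (via $h^{(2)}_{-1}=0$), and that passing from $a=b=d$ to $a=b=d+1$ changes only the leading term, from absent to $+\,s^{(2)}_{(a,a)}$, reflecting the single extra free variable of each type. The base cases $d\le1$ and the stabilized range $b-d\ge2$, both handled by the generic analysis, would anchor an induction on $d$.
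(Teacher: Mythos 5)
There is a genuine gap. Your global skeleton coincides in outline with the paper's (span $(\overline{I}^d/\overline{I}^{d+1})_{(a,b)}$ by images of classical standard monomials, use $[\overline{R}_{(a,b)}]=e_ae_b=\sum_{d}s^{(2)}_{(a+b-d,d)}$ as the organizing identity, and treat $a=b=d$ and $a-1=b-1=d$ separately via alternating sums), but the central step of the generic case is missing. After discarding standard monomials with a repeated minor or repeated tail variable (which do vanish, since every minor squares to zero in $\overline{R}$), your spanning set is still strictly larger than the target dimension: standard monomials in which two \emph{distinct} minors share an index survive, e.g.\ $[\alpha,\delta][\alpha,\varepsilon]=x_\alpha y_\alpha[\delta,\varepsilon]\neq 0$, and ``controlling their linear relations'' is precisely where all the work lies. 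Your proposed mechanism --- a characteristic-free complex built from $E=\sum_k x_k\partial_{y_k}$ and minor multiplication, whose exactness is to be ``verified'' and whose surviving character is to be read off an LGV determinant --- is never constructed, and the claim that minor multiplication ``attains its maximal expected rank in characteristic two'' is not an invocable lemma but a restatement of the theorem. It is moreover genuinely delicate: for $a=b$ the expected rank fails at $d=b$ and $d=b-1$ yet holds for all $d\leq b-2$, a distinction your $\mathfrak{sl}_2$-weight-zero heuristic cannot detect, since \emph{every} bidegree with $a=b$ has weight zero. The paper does this work by an explicit $2$-straightening algorithm whose engine is the characteristic-two collapse $[\alpha,\delta][\alpha,\varepsilon]=x_\alpha y_\alpha[\delta,\varepsilon]$ (Proposition~\ref{prop:main 2-straightening rule}) combined with the Pl\"ucker relation, a precise characterization of the $2$-straight tableaux in the image of $\Phi_{a,b,d}$ (Proposition~\ref{prop:a=b=d 2-straightened} and its companion), and a column-by-column induction culminating in Theorem~\ref{thm:main span}; nothing in your sketch substitutes for this.

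The special cases have the same character of gap. The sign rule ``an $\ell$-fold repeated minor contributes $(-1)^{\ell-1}s^{(2)}_{(a+\ell,a-\ell)}$'' is asserted without a definition of what is being collapsed: in $\overline{R}$ a literally repeated minor is zero, so the alternating sum is not an inclusion--exclusion over repeated columns of standard monomials but a purely combinatorial identity about $2$-semistandard tableaux. The paper first pins down the exact index sets of the bases ($\Tab^{(2)}_{(a,a)}$ minus identical-row tableaux when $a=b=d$; $\Tab^{(2)}_{(a+1,a-1)}$ together with the identical-row tableaux when $a-1=b-1=d$) and then proves $\sum_{j\geq 1}(-1)^{j-1}s^{(2)}_{(a+j,a-j)}=\sum_{T\in\mathcal{D}}t^T$ by the explicit weight-preserving bijections $\mathcal{D}_{a,i}\simeq\mathcal{C}_{a,i+1}$ of Lemma~\ref{lem:property blah bijection} and Proposition~\ref{prop:a=b=d schur poly}; your proposal supplies neither the index sets nor the telescoping argument. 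The suggested cross-check via $\mathbf{SL}_2$ Weyl/tilting characters is likewise only heuristic, as the filtration layers are not known a priori to carry the module structures that computation presumes. To repair the proposal you would need to (i) actually construct your complex and prove its exactness, or replace it with a straightening argument as in the paper, and (ii) prove the tableau identity behind the alternating sums; as written, the proposal restates the theorem at exactly the hard spots rather than proving it.
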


Our overall proof strategy is to find a basis indexed by a class of tableaux introduced in \cite{GRV23}, called \emph{$2$-semistandard Young tableaux}; such tableaux of shape $(a,b)$ index the monomials appearing in $s^{(2)}_{(a,b)}$. This basis will be obtained by taking the images of particular classical standard monomials in $\overline{R}$, and this ensures that the elements are in $\overline{I}^d$ for an appropriate $d$. We then use tableaux combinatorics to show that the images of all classical standard monomials are in the span of the particular subset taken to form our basis, and this will be sufficient to obtain the character formulas.

\subsection{Modular representation theory of $\GL_n$}

If $\char(\bk)=0$, then the representation theory of $\GL_n$ over $\bk$ is very well-understood: every simple polynomial representation is given by $\bfS_\lambda(\bk^n)$, where $\bfS_\lambda(-)$ is the Schur functor corresponding to a partition $\lambda$ with at most $n$ rows. The highest weight of this simple module is $\lambda$, and its character is the Schur polynomial $s_\lambda$ in $n$ variables.

If $\char(\bk)=p>0$, then representations of $\GL_n$ are much more difficult to understand. Let $F^p\subset\Sym^p$ denote the \emph{Frobenius power functor}. Then the \emph{truncated symmetric power functor} is given by
\[T_p\,\Sym^d(\bk^n)=\coker\left(F^p(\bk^n)\otimes\Sym^{d-p}(\bk^n)\to\Sym^d(\bk^n)\right).\]
Doty--Walker conjectured that the characters of all simple $\GL_n$-representations could be understood via tensor products of representations of the form $T_p\,\Sym^d(\bk^n)$, leading to the study of $p$-truncated Schur polynomials \cites{DW92, wal94}. More recently, generalizations of the $h^{(p)}_d$ polynomials have been studied in the context of symmetric function theory \cites{FM, grinberg}. The $p$-truncated Schur polynomials are virtual $\GL_n$-characters, and \cite[Conjecture 6.1]{GRV23} predicts a particular $\GL_n$-representation realizing some of them as actual characters.

As $\GL_n$-representations, we have that
\[\overline{R}_{(a,b)}=T_p\,\Sym^a(\bk^n)\otimes T_p\,\Sym^b(\bk^n)\]
with character $h^{(p)}_a\cdot h^{(p)}_b$. Our main result shows that for $p=2$ and $a>b$, $\overline{R}_{(a,b)}$ has a filtration by $\GL_n$-representations with characters $s^{(2)}_{(a+b-d,d)}$ for $0\leq d\leq b$.

A related open question concerns which simple $\GL_n$-representations can be realized as a composition factor of tensor products of truncated symmetric powers. As in the characteristic zero case, simple modules are indexed by partitions $\lambda$ corresponding to the highest weight of the module. A partition $\lambda$ is called \emph{$p$-restricted} if $\lambda_i-\lambda_{i-1}\leq p-1$ for all $i$.

\begin{conjecture}[Conjecture 6.4, \cite{GRV23}]\label{conj:grv simples}
    If $a-b\geq p-1$, then every composition factor $L(\lambda)$ of $T_p\,\Sym^a(\bk^n)\otimes T_p\,\Sym^b(\bk^n)$ is $p$-restricted.
\end{conjecture}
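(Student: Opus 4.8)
The plan is to pass to characters and reduce the conjecture to a statement about two-row $p$-truncated Schur polynomials. Because the Brauer characters $\{[L(\lambda)]\}$ of the simple polynomial $\GL_n$-modules form a $\Z$-basis of the character ring, the multiplicity of $L(\lambda)$ as a composition factor of $M:=T_p\,\Sym^a(\bk^n)\otimes T_p\,\Sym^b(\bk^n)$ is exactly the coefficient of $[L(\lambda)]$ in $h_a^{(p)}h_b^{(p)}$. Thus it suffices to show that $h_a^{(p)}h_b^{(p)}$ lies in the $\Z$-span of the $[L(\lambda)]$ with $\lambda$ $p$-restricted. The first step is the Jacobi--Trudi telescoping identity
\[ h_a^{(p)}h_b^{(p)}=\sum_{d=0}^{b}s^{(p)}_{(a+b-d,d)}, \]
an identity of symmetric functions, which rewrites the character as a sum of two-row $p$-truncated Schur polynomials, each of row difference $(a+b-2d)\ge a-b\ge p-1$. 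Everything therefore reduces to the claim that for $c-e\ge p-1$ the polynomial $s^{(p)}_{(c,e)}$ lies in the $\Z$-span of the $p$-restricted simple characters.

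I would split this claim into a characteristic-free part and a modular part. First note that every weight of $M$ has coordinates at most $2(p-1)$, and that a non-$p$-restricted dominant weight $\lambda$ in this range decomposes, via the Steinberg tensor product theorem, as $L(\lambda)\cong L(\sigma)\otimes(\Lambda^{k}\bk^n)^{[1]}$ with $\sigma$ $p$-restricted and $k=\#\{i:\lambda_i\ge p\}\ge 1$; so excluding non-$p$-restricted constituents is precisely excluding these Frobenius-twisted factors. \emph{Ingredient 1} (characteristic-free): for $c-e\ge p-1$ the ordinary Schur expansion $s^{(p)}_{(c,e)}=\sum_\kappa N_\kappa s_\kappa$ is supported on $p$-restricted partitions $\kappa$. \emph{Ingredient 2} (modular): for each such $p$-restricted $\kappa$, the dual Weyl module $\nabla(\kappa)$ has only $p$-restricted composition factors. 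Granting both, $s^{(p)}_{(c,e)}=\sum_\kappa N_\kappa\,[\nabla(\kappa)]$ lies in the $p$-restricted span, as required. This structure is borne out in examples: for $p=3$ one computes $s^{(3)}_{(4,2)}=s_{(4,2)}$ and $s^{(3)}_{(5,1)}=s_{(3,2,1)}-s_{(3,1,1,1)}-s_{(1^6)}$, all with $p$-restricted support.

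For Ingredient 1 I would use the combinatorial model of \cite{GRV23}, in which $s^{(p)}_{(c,e)}$ is the generating function of $p$-semistandard Young tableaux of shape $(c,e)$, and run a straightening argument generalizing the $2$-semistandard tableau combinatorics developed in this paper, tracking that the hypothesis $c-e\ge p-1$ keeps the first row long enough that only $p$-restricted shapes $\kappa$ survive in the Schur expansion. This part is purely combinatorial and is closest in spirit to the standard monomial methods already in place here.

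I expect Ingredient 2 to be the main obstacle. The naive hope that $p$-restrictedness is an order ideal for dominance is \emph{false}: for $p=3$ one has $(3,3)\trianglelefteq(4,2)$ with $(4,2)$ $p$-restricted but $(3,3)$ not. Hence one cannot avoid genuine modular input, and the correct tool is the linkage principle; in this example $L((3,3))$ fails to be a constituent of $\nabla((4,2))$ precisely because $(3,3)$ and $(4,2)$ lie in different blocks. The difficulty is that composition factors of Weyl modules (equivalently, decomposition numbers for $\GL_n$) are unknown in general, so the argument must exploit the restricted shape of $\kappa$ together with linkage rather than any closed formula. An alternative, purely representation-theoretic route to excluding the twisted factors $L(\sigma)\otimes(\Lambda^k\bk^n)^{[1]}$ is to restrict $M$ to $G_1T$ (the first Frobenius kernel times the torus), where such a factor can occur only at torus weights in $\sigma+p\Z^n$ of a constrained form; using the explicit monomial model $\bk[x_1,\dots,x_n]/(x_1^p,\dots,x_n^p)$ for the truncated symmetric powers, one would aim to show that the gap $a-b\ge p-1$ makes these weights unreachable. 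Either way, reconciling the combinatorics of Ingredient 1 with the modular constraints of Ingredient 2 is where the real work lies.
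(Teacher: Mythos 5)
Your reduction to characters is sound, and the telescoping identity $h^{(p)}_ah^{(p)}_b=\sum_{d=0}^{b}s^{(p)}_{(a+b-d,d)}$ is correct (the paper uses the same identity at $p=2$ when computing $[\overline{R}_{(a,b)}]$ in the proof of Theorem~\ref{thm:main}). But the proposal is not a proof: both ingredients are left unproven, and Ingredient 2 is the fatal one. As you state it --- ``for $p$-restricted $\kappa$, the dual Weyl module $\nabla(\kappa)$ has only $p$-restricted composition factors'' --- it is false. Already for $\GL_4$ with $p=2$, take $\kappa=(3,2,2,1)$, which is $2$-restricted; then $\Delta(\kappa)\cong\det^{\otimes 2}\otimes\Delta(1,0,0,-1)$ is a twist of the adjoint Weyl module, and since $p\mid n$ the trivial module is a composition factor of the adjoint, so $L((2,2,2,2))=\det^{\otimes 2}$ occurs in $\Delta(\kappa)$ --- and $(2,2,2,2)$ is \emph{not} $2$-restricted in the convention of this paper and of \cite{GRV23} (its last difference is $2$; equivalently, it has two columns of equal length). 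So Ingredient 2 cannot be quoted as a general fact; at best one could hope to prove it for the specific $\kappa$ arising in the expansions of Ingredient 1, with arguments sensitive to $n$ and to linkage --- exactly the part you yourself flag as ``the main obstacle.'' A proposal whose decisive step is acknowledged as open is a plan, not a proof, and Ingredient 1 is likewise only sketched.

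The contrast with the paper is instructive: the paper claims and proves the conjecture only for $p=2$, and its proof is a few lines. In characteristic $2$ one has $T_2\,\Sym^a(\bk^n)=\bfS_{(1^a)}(\bk^n)$, so Pieri's rule gives a filtration of the tensor product with sections $\bfS_\lambda(\bk^n)$ for $\lambda=(a+i,b-i)^\dagger$, a two-column shape with columns of distinct lengths because $a>b$; any composition factor $L(\mu)$ of $\bfS_\lambda(\bk^n)$ satisfies $\mu\trianglelefteq\lambda$, and a short partial-sum count shows any such $\mu$ is again two-column with distinct column lengths, hence $2$-restricted. Your machinery in fact collapses to precisely this argument at $p=2$: Ingredient 1 is trivial there because $s^{(2)}_{(c,e)}=s_{(c,e)^\dagger}$ is a \emph{single} Schur polynomial of $2$-restricted shape (Proposition~\ref{prop:schur poly tab corr}), and Ingredient 2 follows from dominance alone for two-column shapes, with no modular input needed. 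Your $p=3$ example $(3,3)\trianglelefteq(4,2)$ correctly diagnoses why dominance stops sufficing for $p\geq 3$, but you never observe that it does suffice at $p=2$, so you fail to close even the one case the paper actually proves.
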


For $p=2$, this conjecture is a straightforward consequence of Pieri's rule; it is provided in \S \ref{subsec:irreps dir}.

\subsection{Outline} The rest of the paper is organized as follows. In \S\ref{sec:prelim}, we provide definitions and properties on $2$-semistandard Young tableaux. In \S\ref{sec:basis construction}, we construct elements of $\overline{I}^d_{(a,b)}$ corresponding to $2$-semistandard Young tableaux, and in \S\ref{sec:span}, we show that this set of elements forms a $\bk$-spanning set of $\overline{R}_{(a,b)}$. In \S\ref{sec:main thm}, we prove our main result on the character and discuss future work related to simple modular $\GL_n$-representations and developing a standard monomial theory over exterior algebras.

\subsection{Notation} We list some of the notation that will be used throughout the remainder of the paper.
\begin{itemize}
    \item $R=\bk[x_1,\ldots,x_n,y_1,\ldots,y_n]$, where $\bk$ is a field of characteristic $2$
    \item $I\subset R$ is the ideal generated by all $2\times 2$ minors of
    \[\begin{pmatrix} x_1 & x_2 & \cdots & x_n \\ y_1 & y_2 & \cdots & y_n\end{pmatrix}\]
    \item $\overline{R}=R/(x_1^2,\ldots,x_n^2,y_1^2,\ldots,y_n^2)$, and $\overline{I}=I\overline{R}$
    \item $\overline{f}\in\overline{R}$ denotes the image of $f\in R$
    \item $\Tab_{(a,b)}$, $\Tab^{(2)}_{(a,b)}$ are the ($2$-)SSYT of shape $(a,b)$
    \item $s_{(a,b)}$, $s^{(2)}_{(a,b)}$ are ($2$-truncated) Schur polynomials
    \item $h_d$, $h_d^{(2)}$ are ($2$-truncated) complete symmetric polynomials
    \item $F_{T,a}$ is the element of $I^d_{(a,b)}$ corresponding to $T$ a tableau of shape $(a+b-d,d)$
    \item $G_{T,a}$ is the element of $\overline{I}^d_{(a,b)}$ corresponding to a $2$-SSYT $T$
    \item $\mathcal{B}_{a,b,d}$ is the set of $2$-standard monomials in $\overline{I}^d_{(a,b)}$
    \item $\mathcal{B}_{a,b}$ is the union of $\mathcal{B}_{a,b,d}$ over $0\leq d\leq b$
    \item $\Phi_{a,b,d}$ is the map from a set of $2$-SSYT to $\Tab_{(a+b-d,d)}$
\end{itemize}

\subsection*{Acknowledgements}
We thank the organizers of the 2023 Pragmatic workshop held at the University of Catania, where this project began. We are especially grateful to Claudiu Raicu for suggesting this problem, and to him and Alessio Sammartano for many helpful discussions and feedback. Computational experiments using \texttt{Macaulay2} \cite{M2} provided valuable insights throughout our work, and we thank Van Vo for assistance with these computations.

\section{Preliminaries}\label{sec:prelim}

We begin by recalling the definitions of semistandard Young tableaux and classical standard monomials.

\begin{definition}
    A Young tableau $T$ of shape $(a,b)$ given by
    \begin{displaymath}
\ytableausetup
{mathmode,boxsize=1.5em}
T=\begin{ytableau}
\alpha_1 & \alpha_2 & \cdots & \alpha_b & \cdots & \alpha_a\\
\beta_1 & \beta_2 & \cdots & \beta_b
\end{ytableau}
\end{displaymath}
   is called \emph{semistandard} if its indices are weakly increasing along rows and strictly increasing along columns:
    \[\alpha_1\leq\cdots\leq \alpha_a,\quad\beta_1\leq\cdots\leq\beta_b,\quad \alpha_i<\beta_i\text{ for all $1\leq i\leq b$.}\]
    We let $\Tab_{(a,b)}$ denote the set of semistandard Young tableaux of shape $(a,b)$ with indices in $[n]$.
\end{definition}

For a tableau $T$ of shape $(a,b)$, let
\[t^T\coloneqq t_{\alpha_1}\cdots t_{\alpha_a}t_{\beta_1}\cdots t_{\beta_b}\in \Z[t_1,\ldots,t_n].\]
Then for $a\geq b\geq d$, we have that
\begin{equation}\label{eqn:schur tab}
s_{(a+b-d,d)}=\sum_{T\in\Tab_{(a+b-d,d)}}t^T.
\end{equation}
Furthermore, we have a basis of $(I^d/I^{d+1})_{(a,b)}$ indexed by $\Tab_{(a+b-d,d)}$. For $i\neq j$ with $1\leq i,j\leq n$, let $[i,j]$ denote the $2$-minor $x_iy_j-x_jy_i\in R$. For a tableau $T$ (not necessarily semistandard) of shape $(a+b-d,d)$, let
\[F_{T,a}=\left(\prod_{i=1}^d [\alpha_i,\beta_i]\right) x_{\alpha_{d+1}}\cdots x_{\alpha_a}y_{\alpha_{a+1}}\cdots y_{\alpha_{a+b-d}}.\]
The residue classes of elements of the set $\{F_{T,a}:T\in \Tab_{(a+b-d,d)}\}$ form a basis of $(I^d/I^{d+1})_{(a,b)}$. One way to show linear independence is by noting that the initial term of $F_{T,a}$ with respect to the reverse lexicographic order on $R$ is
\[x_{\alpha_1}\cdots x_{\alpha_a}y_{\beta_1}\cdots y_{\beta_d}y_{\alpha_{a+1}}\cdots y_{\alpha_{a+b-d}}.\]

The following modification to the definition of semistandard Young tableaux is motivated by obtaining an analogue of (\ref{eqn:schur tab}) for $p$-truncated Schur polynomials \cite[Definition 6.2]{GRV23}.

\begin{definition}
    A tableau $T$ of shape $(a,b)$ is \emph{$p$-semistandard} if the following hold:
    \begin{enumerate}
        \item $T$ is weakly increasing along rows and columns:
        \[\alpha_1\leq\cdots\leq \alpha_a,\quad \beta_1\leq\cdots\leq \beta_b,\quad \alpha_i\leq \beta_i\text{ for $1\leq i\leq b$.}\]
        \item Any constant sequence within each row has length at most $(p-1)$:
        \[\alpha_i<\alpha_{i+p-1}\quad\text{and}\quad \beta_j<\beta_{j+p-1}\text{ for all $i,j$.}\]
        \item If $\alpha_j=\beta_j$, consider the unique indices $r\geq j$ and $s\leq j$ so that
        \[\alpha_j=\cdots=\alpha_r<\alpha_{r+1},\quad \beta_{s-1}<\beta_s=\cdots=\beta_j.\]
        We have $(r-j+1)+(j-s+1)\geq p$, i.e., the total number of entries to the right of (and including) $\alpha_j$ and to the left of (and including) $\beta_j$ is at least $p$.
    \end{enumerate}
\end{definition}

Notice that $T$ is $2$-semistandard if and only if its transpose tableau is semistandard in the classical sense. We now prove the analogue of (\ref{eqn:schur tab}) in the case $p=2$ using this fact. For a partition $\lambda$, let $\lambda^\dagger$ denote the transpose partition.

\begin{proposition}\label{prop:schur poly tab corr}
    Let $a\geq b$. Then,
    \[s^{(2)}_{(a,b)}=\sum_{T\in \Tab^{(2)}_{(a,b)}} t^T.\]
\end{proposition}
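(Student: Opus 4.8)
The plan is to reduce everything to the classical theory of Schur polynomials by exploiting the transpose correspondence recorded just before the statement. The first observation is that the case $p=2$ simplifies the truncated complete symmetric polynomials drastically: the condition $0\leq i_j<2$ forces each exponent $i_j\in\{0,1\}$, so a monomial in $h^{(2)}_d$ is a product of $d$ distinct variables. Hence $h^{(2)}_d=e_d$, the elementary symmetric polynomial in $t_1,\ldots,t_n$. Substituting this into the defining determinant gives
\[s^{(2)}_{(a,b)}=\det\begin{pmatrix} e_a & e_{a+1}\\ e_{b-1} & e_b\end{pmatrix}.\]

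Next I would pass through the transpose bijection $T\mapsto T^\dagger$ between $\Tab^{(2)}_{(a,b)}$ and the set of classical semistandard Young tableaux of the conjugate shape $(a,b)^\dagger=(2^b,1^{a-b})$. Transposing a tableau merely permutes the positions of the entries and leaves the underlying multiset unchanged, so $t^T=t^{T^\dagger}$, and therefore
\[\sum_{T\in\Tab^{(2)}_{(a,b)}}t^T=\sum_{T'\in\Tab_{(2^b,1^{a-b})}}t^{T'}=s_{(2^b,1^{a-b})},\]
where the last equality is the standard combinatorial formula for the Schur polynomial of the shape $(2^b,1^{a-b})$ (the general-shape version of \eqref{eqn:schur tab}).

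It then remains to identify $s_{(2^b,1^{a-b})}$ with the $2\times 2$ determinant from the first step, and for this I would invoke the dual Jacobi--Trudi (Nägelsbach--Kostka) identity, which writes a Schur polynomial of shape $\lambda$ as $\det\bigl(e_{\lambda^\dagger_i-i+j}\bigr)$, a determinant of size $\lambda_1$. Taking $\lambda=(2^b,1^{a-b})$, whose largest part is $\lambda_1=2$ and whose conjugate is $\lambda^\dagger=(a,b)$, the four index shifts produce exactly the entries $e_a,e_{a+1},e_{b-1},e_b$ in the positions above, so the identity delivers precisely $\det\begin{pmatrix} e_a & e_{a+1}\\ e_{b-1} & e_b\end{pmatrix}=s^{(2)}_{(a,b)}$.

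The argument is essentially formal, with the only genuine external input being the classical dual Jacobi--Trudi identity. The one point requiring a little care is the bookkeeping of the conjugate shape and the index shifts, so that the determinant emerges in exactly the normalization used in the definition of $s^{(2)}_{(a,b)}$. I would also check the degenerate case $b=0$ separately: there the shape $(2^b,1^{a-b})$ collapses to the single column $(1^a)$, while $e_{b-1}=e_{-1}=0$ makes the determinant reduce to $e_a\cdot e_0=e_a=s_{(1^a)}$, so the two sides still agree.
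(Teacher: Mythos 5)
Your proof is correct and takes essentially the same route as the paper's: both reduce to $h^{(2)}_d=e_d$, invoke the dual Jacobi--Trudi identity to identify the determinant $\det\begin{pmatrix} e_a & e_{a+1}\\ e_{b-1} & e_b\end{pmatrix}$ with $s_{(a,b)^\dagger}$, and use the transpose bijection between $\Tab^{(2)}_{(a,b)}$ and classical semistandard tableaux of the conjugate shape. The only differences are cosmetic: you run the chain of equalities in the opposite direction and add an explicit check of the degenerate case $b=0$, which the paper leaves implicit.
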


\begin{proof}
    Note that $h_d^{(2)}$ is the sum of all degree $d$ squarefree monomials in $t_1,\ldots,t_n$, which is exactly the $d^{\text{th}}$ classical elementary symmetric polynomial $e_d$. We then see that
    \[s^{(2)}_{(a,b)}=\det\begin{pmatrix} h_a^{(2)} & h_{a+1}^{(2)}\\ h_{b-1}^{(2)} & h_b^{(2)}\end{pmatrix}=\det\begin{pmatrix} e_a & e_{a+1} \\ e_{b-1} & e_b\end{pmatrix}=s_{(a,b)^\dagger},\]
    where the last equality follows from the Jacobi--Trudi identity. Thus,
    \[s^{(2)}_{(a,b)}=s_{(a,b)^\dagger}=\sum_{T\in\Tab_{(a,b)^\dagger}}t^T=\sum_{T\in\Tab^{(2)}_{(a,b)}}t^T.\]
\end{proof}

Since $\char(\bk)=2$, the ring $\overline{R}$ is the exterior algebra on $2n$ variables, and we make further remarks on this in \S \ref{subsec:exterior}.

\subsection{$2$-Straightening Rules}

The straightening rules for Young tableaux are essential for showing that classical standard monomials span $R$. They are induced by Pl\"ucker relations, and are independent of the characteristic of $\bk$. In what follows, we specialize to working over a field of characteristic $2$, and deduce further relations among minors in $\overline{R}$. We will use these relations in \S \ref{sec:span} to show that all images of standard monomials are in the span of the proposed basis that we define in \S \ref{sec:basis construction}.

The following result is the key straightening rule for our case of $2$-minors over a field of characteristic $2$ (see \cite[Lemma 3.2.3]{GRV23} for a more general statement).

\begin{proposition}[Classical straightening in characteristic $2$]\label{prop:classical straightening}
Let $\alpha,\beta,\gamma,\delta\in[n]$ be distinct. Then the following relations hold in $R$:
    \begin{align*}
        [\alpha,\beta][\gamma,\delta]+[\alpha,\gamma][\beta,\delta]+[\alpha,\delta][\beta,\gamma]&= 0,\\
        [\alpha,\beta]x_\gamma+[\alpha,\gamma]x_\beta+[\beta,\gamma]x_\alpha&=0.
    \end{align*}
\end{proposition}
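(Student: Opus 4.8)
The plan is to recognize both identities as reductions modulo $2$ of classical integral Plücker-type relations, where the characteristic-$2$ hypothesis enters only to collapse the alternating signs into the symmetric form stated. Throughout I use that $[i,j]=x_iy_j-x_jy_i$, and that the distinctness of $\alpha,\beta,\gamma,\delta$ guarantees the relevant monomials are genuinely squarefree so no degenerate coincidences occur.

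For the second (linear) relation, I would first establish the signed version over $\Z$,
\[
[\alpha,\beta]x_\gamma-[\alpha,\gamma]x_\beta+[\beta,\gamma]x_\alpha=0,
\]
by observing that it is exactly the cofactor expansion along the top row of the $3\times 3$ matrix
\[
\begin{pmatrix} x_\alpha & x_\beta & x_\gamma \\ x_\alpha & x_\beta & x_\gamma \\ y_\alpha & y_\beta & y_\gamma \end{pmatrix},
\]
whose determinant vanishes identically because two of its rows coincide. Reducing coefficients modulo $2$ turns each $-$ into a $+$, yielding the stated relation in $R$.

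For the first (quadratic) relation, I would invoke the classical three-term Plücker relation
\[
[\alpha,\beta][\gamma,\delta]-[\alpha,\gamma][\beta,\delta]+[\alpha,\delta][\beta,\gamma]=0,
\]
which holds over $\Z$ for the $2\times 2$ minors of any $2\times n$ matrix; reducing modulo $2$ again replaces the alternating signs with $+$. If a self-contained derivation is preferred to citing this standard fact, I would instead work directly in $R$, where $[i,j]=x_iy_j+x_jy_i$, expand each of the three products into its four monomials, and verify that each of the six squarefree monomials of the form $x_\bullet x_\bullet y_\bullet y_\bullet$ appears exactly twice in the total sum; since $2=0$ in $\bk$, the whole expression vanishes.

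The computations are entirely routine, so there is no genuine obstacle; the only point requiring care is bookkeeping the index patterns and signs so that the reduction modulo $2$ is applied to the correct integral identity, and confirming in the direct approach that the pairing of monomials is exhaustive.
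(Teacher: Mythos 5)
Your proof is correct. The paper itself gives no proof of this proposition, deferring instead to the more general \cite[Lemma 3.2.3]{GRV23}, and your argument---reducing modulo $2$ the classical signed three-term Pl\"ucker relation and the vanishing cofactor expansion of a $3\times 3$ determinant with a repeated row---is precisely the standard justification; both integral identities hold as stated, and in the direct expansion each of the six monomials $x_\bullet x_\bullet y_\bullet y_\bullet$ indeed occurs exactly twice, so the sum vanishes since $\char(\bk)=2$.
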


We now provide ``$2$-straightening" rules for Young tableaux. These rules give relations among images of classical standard monomials in $\overline{R}$.

\begin{proposition}\label{prop:main 2-straightening rule}
    Let $a \geq 2$ and $m = a-2$. Let $T$ be a tableau of shape $(a,a)$ given by
    \begin{displaymath}
    \ytableausetup
    {mathmode,boxsize=2em}
    \begin{ytableau}
    \alpha & \alpha & \beta_1 & \beta_2 & \cdots & \cdots & \beta_{m-1} &  \beta_m \\
    \beta_1 & \beta_2 & \cdots & \cdots & \cdots & \beta_m & \delta & \varepsilon 
    \end{ytableau}
    \end{displaymath}
    Then in $\overline{R}$,
    \[\overline{F}_{T,a}=\left(x_\alpha y_\alpha\prod_{i=1}^m x_{\beta_i}y_{\beta_i} \right)[\delta,\varepsilon].\]
\end{proposition}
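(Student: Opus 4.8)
The plan is to reduce the entire statement to a single elementary identity in $\overline{R}$ and then iterate it. First I would read off the $2$-minors attached to the columns of $T$: taking the top entry over the bottom entry in each column, we get
\[F_{T,a}=[\alpha,\beta_1]\,[\alpha,\beta_2]\left(\prod_{i=1}^{m-2}[\beta_i,\beta_{i+2}]\right)[\beta_{m-1},\delta]\,[\beta_m,\varepsilon],\]
since column $j$ for $3\le j\le m$ contributes $[\beta_{j-2},\beta_j]$, while the last two columns contribute $[\beta_{m-1},\delta]$ and $[\beta_m,\varepsilon]$.

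The key identity is that for any indices $\gamma,\mu,\nu$ one has $[\gamma,\mu]\,[\gamma,\nu]=x_\gamma y_\gamma\,[\mu,\nu]$ in $\overline{R}$. This follows by expanding $[\gamma,\mu]=x_\gamma y_\mu+x_\mu y_\gamma$ and $[\gamma,\nu]=x_\gamma y_\nu+x_\nu y_\gamma$ and discarding the two terms containing $x_\gamma^2$ or $y_\gamma^2$, which vanish in $\overline{R}$; the surviving terms are $x_\gamma y_\gamma(x_\nu y_\mu+x_\mu y_\nu)=x_\gamma y_\gamma[\mu,\nu]$. I note that no distinctness hypothesis is needed, which keeps the induction clean. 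Applying this to the first two columns, which share the index $\alpha$, gives $[\alpha,\beta_1][\alpha,\beta_2]=x_\alpha y_\alpha[\beta_1,\beta_2]$, so that, writing $\beta_{m+1}=\delta$ and $\beta_{m+2}=\varepsilon$ to absorb the final two factors,
\[\overline{F}_{T,a}=x_\alpha y_\alpha\,[\beta_1,\beta_2]\prod_{i=1}^{m}[\beta_i,\beta_{i+2}].\]

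The remaining factor has the shape of a ``chain'', and I would finish by a short induction. Setting $c_i=\beta_i$ and $k=m+2$, the claim to prove is
\[[c_1,c_2]\prod_{i=1}^{k-2}[c_i,c_{i+2}]=\left(\prod_{i=1}^{k-2}x_{c_i}y_{c_i}\right)[c_{k-1},c_k]\]
for any sequence $c_1,\dots,c_k$. The base case $k=2$ is trivial. For the inductive step I combine the leading factor $[c_1,c_2]$ with the first factor $[c_1,c_3]$ of the product; since they share $c_1$, the key identity yields $x_{c_1}y_{c_1}[c_2,c_3]$, and the leftover factors $[c_2,c_3]\prod_{i=2}^{k-2}[c_i,c_{i+2}]$ are precisely the chain for the shifted sequence $c_2,\dots,c_k$, to which the inductive hypothesis applies. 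Peeling off one factor $x_{c_i}y_{c_i}$ at each stage leaves exactly $[c_{k-1},c_k]=[\delta,\varepsilon]$, which together with the factor $x_\alpha y_\alpha$ extracted above gives the asserted formula.

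I expect no serious obstacle here: the only arithmetic input is the key identity, which is immediate in the exterior algebra $\overline{R}$. The main care is bookkeeping---correctly matching the column minors of $T$ and organizing the index shifts so that the telescoping induction closes---rather than any genuine difficulty.
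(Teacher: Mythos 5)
Your proof is correct and takes essentially the same approach as the paper: your key identity $[\gamma,\mu][\gamma,\nu]=x_\gamma y_\gamma[\mu,\nu]$ in $\overline{R}$ is precisely the paper's base case $m=0$, and your telescoping chain induction is the paper's induction on $m$, just peeling the shared index off the left end of the chain instead of absorbing all but the last column into the inductive hypothesis. The only cosmetic difference is that you extract $x_\alpha y_\alpha$ first and isolate the remaining product as a standalone chain lemma.
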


\begin{proof}
    We proceed by induction on $m$. For $m=0$, we have that
    \[[\alpha,\delta][\alpha,\varepsilon]=x_\alpha x_\varepsilon y_\alpha y_\delta + x_\alpha x_\delta y_\alpha y_\varepsilon=x_\alpha y_\alpha [\delta,\varepsilon].\]
    Now assume $m>0$. Assuming the statement has been proven for $m-1$, we have that
    \begin{align*}
        \overline{F}_{T,a}&=\left(x_\alpha y_\alpha\prod_{i=1}^{m-1} x_{\beta_i}y_{\beta_i} \right)[\beta_m,\delta][\beta_m,\varepsilon]\\
        &=\left(x_\alpha y_\alpha\prod_{i=1}^{m-1} x_{\beta_i}y_{\beta_i} \right)x_{\beta_m}y_{\beta_m}[\delta,\varepsilon].
    \end{align*}
\end{proof}

\begin{example}
    Let $a=b=d=4$, and let $T$ be a tableau of shape $(a,a)$ given by
        \begin{displaymath}
\ytableausetup
{mathmode,boxsize=1.5em}
\begin{ytableau}
1 & 1 & 2 & 3 \\
2 & 3 & 4 & 5  
\end{ytableau}
\end{displaymath}
Then, in $\overline{R}$, we have
\begin{align*}
 \overline{F}_{T,4} &= (x_1y_2+x_2y_1)(x_1y_3+x_3y_1)(x_4y_2+x_2y_4)(x_3y_5+x_5y_3)\\
 &= x_1y_1(x_2y_2)(x_3y_3)(x_4y_5+x_5y_4).
\end{align*}
\end{example}

\begin{corollary}\label{cor:2-straightening swap 3,3}
    Let $S,T$ be tableaux of shape $(3,3)$ given by
    \begin{displaymath}
    \ytableausetup
    {mathmode,boxsize=2em}
    S=\begin{ytableau}
    \alpha & \alpha & \gamma \\
    \delta & \varepsilon & \eta
    \end{ytableau}\,\, ,
    \quad T = \begin{ytableau}
    \varepsilon & \eta & \gamma \\
    \delta & \alpha & \alpha
    \end{ytableau}
    \end{displaymath}
    Then $\overline{F}_{S,3}=\overline{F}_{T,3}$ in $\overline{R}$.
\end{corollary}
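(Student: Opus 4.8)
The plan is to expand both $\overline{F}_{S,3}$ and $\overline{F}_{T,3}$ directly from the definition of $F_{T,a}$ and show they reduce to a common expression in $\overline{R}$. Since both tableaux have shape $(3,3)$, i.e. $a=b=d=3$, the $x$- and $y$-tails in the formula for $F_{T,a}$ are empty, so each $\overline{F}$ is simply the product of the three $2$-minors read off the columns:
\[\overline{F}_{S,3}=\overline{[\alpha,\delta][\alpha,\varepsilon][\gamma,\eta]},\qquad \overline{F}_{T,3}=\overline{[\varepsilon,\delta][\eta,\alpha][\gamma,\alpha]}.\]

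The central observation is that each product contains a pair of minors sharing the repeated index $\alpha$, and the base case ($m=0$) inside the proof of Proposition~\ref{prop:main 2-straightening rule} gives the identity $\overline{[\alpha,\mu][\alpha,\nu]}=x_\alpha y_\alpha[\mu,\nu]$ in $\overline{R}$ (it is here that the relations $x_\alpha^2=y_\alpha^2=0$ are used). For $S$, the first two columns supply the pair $[\alpha,\delta][\alpha,\varepsilon]$, with $\alpha$ in the top row, and this identity gives $\overline{F}_{S,3}=x_\alpha y_\alpha[\delta,\varepsilon][\gamma,\eta]$ after multiplying by the remaining column $[\gamma,\eta]$.

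For $T$ the repeated $\alpha$ instead sits in the \emph{bottom} row, so the relevant pair is $[\eta,\alpha][\gamma,\alpha]$. I would handle this using the antisymmetry $[\mu,\nu]=-[\nu,\mu]=[\nu,\mu]$ in characteristic $2$ to rewrite $[\eta,\alpha][\gamma,\alpha]=[\alpha,\eta][\alpha,\gamma]$, then apply the same base-case identity to obtain $x_\alpha y_\alpha[\eta,\gamma]=x_\alpha y_\alpha[\gamma,\eta]$. Since also $[\varepsilon,\delta]=[\delta,\varepsilon]$ in characteristic $2$, this yields $\overline{F}_{T,3}=x_\alpha y_\alpha[\delta,\varepsilon][\gamma,\eta]$, matching the expression for $\overline{F}_{S,3}$ and finishing the proof.

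There is no genuine obstacle: the statement is a short, self-contained computation that bootstraps off the $m=0$ case already established in Proposition~\ref{prop:main 2-straightening rule}. The only point requiring a moment of care is tracking which slot the repeated index $\alpha$ occupies in $S$ versus $T$, and this is resolved entirely by the characteristic-$2$ antisymmetry of the minors. Indeed, this sign-insensitivity is the structural reason the ``swap'' in the corollary is legitimate, which suggests the cleanest write-up is to state the slot-independent relation $\overline{[\alpha,\mu][\alpha,\nu]}=\overline{[\mu,\alpha][\nu,\alpha]}=x_\alpha y_\alpha[\mu,\nu]$ once and then apply it to each side.
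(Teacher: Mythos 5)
Your proof is correct and matches the paper's: the paper likewise reduces both $\overline{F}_{S,3}$ and $\overline{F}_{T,3}$ to $x_\alpha y_\alpha[\delta,\varepsilon][\gamma,\eta]$ via the $m=0$ case of Proposition~\ref{prop:main 2-straightening rule}, with the characteristic-$2$ symmetry $[\mu,\nu]=[\nu,\mu]$ handling the placement of the repeated index $\alpha$ exactly as you describe. Your write-up merely makes explicit the sign-insensitivity that the paper's one-line proof leaves implicit.
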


\begin{proof}
    By Proposition~\ref{prop:main 2-straightening rule}, 
    \[\overline{F}_{S,3}=x_\alpha y_\alpha[\delta,\epsilon][\gamma,\eta]=\overline{F}_{T,3}.\]
\end{proof}

\begin{corollary}\label{cor:2-straightening pairs}
    Let $m\geq 0$, and let $S,T,U$ be tableaux of shape $(a,a)$ given by
    \begin{displaymath}
    \ytableausetup
    {mathmode,boxsize=2em}
    S=\begin{ytableau}
    \alpha & \alpha & \beta_1 & \beta_2 & \cdots & \cdots & \beta_{m-1} &  \beta_m & \gamma \\
    \beta_1 & \beta_2 & \cdots & \cdots & \cdots & \beta_m & \delta & \varepsilon & \eta
    \end{ytableau}
    \end{displaymath}
    
    \begin{displaymath}
    \ytableausetup
    {mathmode,boxsize=2em}
    T = \begin{ytableau}
    \alpha & \alpha & \beta_1 & \beta_2 & \cdots & \cdots & \beta_{m-1} &  \beta_m & \delta \\
    \beta_1 & \beta_2 & \cdots & \cdots & \cdots & \beta_m & \gamma & \varepsilon & \eta
    \end{ytableau}
    \end{displaymath}

    \begin{displaymath}
    \ytableausetup
    {mathmode,boxsize=2em}
    U = \begin{ytableau}
    \alpha & \alpha & \beta_1 & \beta_2 & \cdots & \cdots & \beta_{m-1} &  \beta_m & \varepsilon \\
    \beta_1 & \beta_2 & \cdots & \cdots & \cdots & \beta_m & \gamma & \delta & \eta
    \end{ytableau}
    \end{displaymath}

    Then $\overline{F}_{S,a}+\overline{F}_{T,a}+\overline{F}_{U,a}=0$ in $\overline{R}$.

    The same statement holds when $S,T,U$ are of shape $(a,a-1)$ and the box with index $\eta$ is removed from each.
\end{corollary}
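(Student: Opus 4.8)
The plan is to reduce each $\overline{F}_{\bullet,a}$ to a common monomial times a single quadratic (resp.\ linear) expression in the minors on the four (resp.\ three) ``free'' indices, and then to recognize the resulting sum as one of the straightening relations of Proposition~\ref{prop:classical straightening}.

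First I would observe that in each of $S,T,U$ the leftmost $m+2$ columns have exactly the shape $(m+2,m+2)$ form treated in Proposition~\ref{prop:main 2-straightening rule}: the top row reads $\alpha,\alpha,\beta_1,\dots,\beta_m$ and the bottom row reads $\beta_1,\dots,\beta_m$ followed by two entries drawn from $\{\gamma,\delta,\varepsilon\}$. Writing $F_{\bullet,a}$ as the product of its $a$ column minors and applying Proposition~\ref{prop:main 2-straightening rule} to the product of the first $m+2$ of them, I obtain, with the common prefactor $M\coloneqq x_\alpha y_\alpha\prod_{i=1}^m x_{\beta_i}y_{\beta_i}$,
\[\overline{F}_{S,a}=M\,[\delta,\varepsilon][\gamma,\eta],\qquad \overline{F}_{T,a}=M\,[\gamma,\varepsilon][\delta,\eta],\qquad \overline{F}_{U,a}=M\,[\gamma,\delta][\varepsilon,\eta],\]
where the last factor in each case is the minor of the final column. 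Summing and factoring out $M$ leaves $M$ times $[\delta,\varepsilon][\gamma,\eta]+[\gamma,\varepsilon][\delta,\eta]+[\gamma,\delta][\varepsilon,\eta]$, which vanishes by the first (Pl\"ucker) relation of Proposition~\ref{prop:classical straightening} applied to $\gamma,\delta,\varepsilon,\eta$.

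For the shape $(a,a-1)$ variant, deleting $\eta$ turns the final column into its single top entry, so by the definition of $F_{\bullet,a}$ that column now contributes an $x$-variable rather than a minor. The identical application of Proposition~\ref{prop:main 2-straightening rule} to the first $m+2$ columns yields $\overline{F}_{S,a}=M\,[\delta,\varepsilon]x_\gamma$, $\overline{F}_{T,a}=M\,[\gamma,\varepsilon]x_\delta$, and $\overline{F}_{U,a}=M\,[\gamma,\delta]x_\varepsilon$, whose sum is $M$ times $[\delta,\varepsilon]x_\gamma+[\gamma,\varepsilon]x_\delta+[\gamma,\delta]x_\varepsilon$; this vanishes by the second relation of Proposition~\ref{prop:classical straightening}.

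The one point that needs care---and the only genuine, if minor, obstacle---is that Proposition~\ref{prop:classical straightening} is stated for \emph{distinct} indices, whereas $\gamma,\delta,\varepsilon,\eta$ need not all be distinct here. I would handle this by noting that both relations are in fact polynomial identities valid for arbitrary indices: when two indices coincide, the corresponding minor vanishes and the two surviving terms become equal, hence cancel in characteristic $2$. Thus both relations, and therefore both claimed identities, hold with no distinctness hypothesis.
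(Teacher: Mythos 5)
Your proof is correct and follows essentially the same route as the paper's: both apply Proposition~\ref{prop:main 2-straightening rule} to the first $m+2$ columns to extract the common factor $x_\alpha y_\alpha\prod_{i=1}^m x_{\beta_i}y_{\beta_i}$, then conclude by the quadratic (resp.\ linear) relation of Proposition~\ref{prop:classical straightening}, in both shapes. Your closing remark on non-distinct indices is a small refinement the paper leaves implicit, and your characteristic-$2$ cancellation argument for coinciding indices is valid.
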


\begin{proof}
    By Proposition~\ref{prop:main 2-straightening rule},
    \begin{align*}
        \overline{F}_{S,a}+\overline{F}_{T,a}+\overline{F}_{U,a}&=x_\alpha y_\alpha\left(\prod_{i=1}^m x_{\beta_i}y_{\beta_i}\right)([\delta,\varepsilon][\gamma,\eta]+[\gamma,\varepsilon][\delta,\eta]+[\gamma,\delta][\varepsilon,\eta]).
    \end{align*}
    This is equal to $0$ in $\overline{R}$ by Proposition~\ref{prop:classical straightening}. The same argument holds if the boxes with index $\eta$ are removed from each tableau.
\end{proof}

\begin{example}
Let 

    \begin{displaymath}
    \ytableausetup
    {mathmode,boxsize=2em}
    S=\begin{ytableau}
    1 & 1 & 2 & 4 & 5 \\
    2 & 4 & 6 & 7 & 8
    \end{ytableau}
    \end{displaymath}
    
    \begin{displaymath}
    \ytableausetup
    {mathmode,boxsize=2em}
    T=\begin{ytableau}
    1 & 1 & 2 & 4 & 6 \\
    2 & 4 & 5 & 7 & 8
    \end{ytableau}
    \end{displaymath}

    \begin{displaymath}
    \ytableausetup
    {mathmode,boxsize=2em}
    U=\begin{ytableau}
    1 & 1 & 2 & 4 & 7 \\
    2 & 4 & 5 & 6 & 8
    \end{ytableau}
    \end{displaymath}
By Proposition ~\ref{prop:main 2-straightening rule} or direct computation,

\begin{align*}
 \overline{F}_{S,5} &= x_1x_2x_4y_1y_2y_4(x_7y_6+x_6y_7)(x_8y_5+x_5y_8),\\
 \overline{F}_{T,5} &= x_1x_2x_4y_1y_2y_4(x_7y_5+x_5y_7)(x_8y_6+x_6y_8),\\
 \overline{F}_{U,5} &= x_1x_2x_4y_1y_2y_4(x_6y_5+x_5y_6)(x_8y_7+x_7y_8).\\
\end{align*}
Then $\overline{F}_{S,5} + \overline{F}_{T,5} + \overline{F}_{U,5} = 0$ in $\overline{R}$.
\end{example}

\begin{corollary}\label{cor:2-straightening swap 3,2}
    Let $S,T$ be tableaux of shape $(3,2)$ given by
    \begin{displaymath}
    \ytableausetup
    {mathmode,boxsize=2em}
    S=\begin{ytableau}
    \alpha & \alpha & \gamma \\
    \delta & \varepsilon 
    \end{ytableau}\,\, ,
    \quad T = \begin{ytableau}
    \delta & \alpha & \alpha \\
    \varepsilon & \gamma 
    \end{ytableau}
    \end{displaymath}
    Then $\overline{F}_{S,3}=\overline{F}_{T,3}$ in $\overline{R}$.
\end{corollary}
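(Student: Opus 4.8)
The plan is to compute $\overline{F}_{S,3}$ and $\overline{F}_{T,3}$ explicitly by unwinding the definition of $F_{T,a}$ and then reducing both to a common normal form. Reading off the entries of $S$, one has $\alpha_1=\alpha$, $\alpha_2=\alpha$, $\alpha_3=\gamma$ in the top row and $\beta_1=\delta$, $\beta_2=\varepsilon$ in the bottom row. Since the shape is $(3,2)$ we have $a=3$, $d=2$, and $b=d$, so the $y$-factors in $F_{S,3}$ are empty and a single $x$-factor survives, giving $F_{S,3}=[\alpha,\delta][\alpha,\varepsilon]\,x_\gamma$. Similarly $T$ has top row $\delta,\alpha,\alpha$ and bottom row $\varepsilon,\gamma$, so $F_{T,3}=[\delta,\varepsilon][\alpha,\gamma]\,x_\alpha$.

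Next I would simplify each side in $\overline{R}$. For the tableau $S$, the repeated entry $\alpha$ sits at the front of the top row, which is exactly the configuration handled by the base case $m=0$ of Proposition~\ref{prop:main 2-straightening rule}; applying it yields $\overline{F}_{S,3}=x_\alpha y_\alpha\,[\delta,\varepsilon]\,x_\gamma$. For $T$ the repeated $\alpha$ is no longer at the front, so Proposition~\ref{prop:main 2-straightening rule} does not apply verbatim. Instead I would expand the single product $[\alpha,\gamma]x_\alpha=(x_\alpha y_\gamma+x_\gamma y_\alpha)x_\alpha$ and use the defining relation $x_\alpha^2=0$ in $\overline{R}$ to kill the first term, leaving $[\alpha,\gamma]x_\alpha=x_\alpha y_\alpha x_\gamma$. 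Thus $\overline{F}_{T,3}=[\delta,\varepsilon]\,x_\alpha y_\alpha\,x_\gamma$, which coincides with $\overline{F}_{S,3}$ since the even-degree factors $x_\alpha y_\alpha$ and $[\delta,\varepsilon]$ are central and hence commute.

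The computation is short, so there is no serious obstacle; the only subtlety is recognizing that $T$ falls outside the hypotheses of Proposition~\ref{prop:main 2-straightening rule} and must instead be simplified by invoking the Frobenius relation $x_\alpha^2=0$ directly on the surviving product $[\alpha,\gamma]x_\alpha$. An alternative, perhaps cleaner, route is to observe that both tableaux carry the same multiset of entries with $\alpha$ appearing twice, and to reduce $T$ to the front-loaded form by reflecting it before applying the degenerate case of Proposition~\ref{prop:main 2-straightening rule}; this parallels the treatment of the shape-$(3,3)$ analogue in Corollary~\ref{cor:2-straightening swap 3,3}.
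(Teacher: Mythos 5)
Your proposal is correct and matches the paper's proof in all essentials: the paper likewise expands $\overline{F}_{T,3}=[\delta,\varepsilon][\alpha,\gamma]x_\alpha$, kills the $x_\alpha^2$ term to get $[\delta,\varepsilon]x_\alpha y_\alpha x_\gamma$, and identifies this with $\overline{F}_{S,3}$ via the $m=0$ case of Proposition~\ref{prop:main 2-straightening rule}. Your only deviations are cosmetic (reducing both sides to a common normal form rather than a single chain of equalities, and the unnecessary remark about centrality, since $\overline{R}$ is commutative).
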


\begin{proof}
    Expanding $\overline{F}_{T,3}$, we obtain
    \[\overline{F}_{T,3}=[\delta,\varepsilon][\alpha,\gamma]x_\alpha=[\delta,\varepsilon]x_\alpha y_\alpha x_\gamma=\overline{F}_{S,3},\]
    where the last equality follows from Proposition~\ref{prop:main 2-straightening rule}.
\end{proof}

\section{Construction of 2-standard monomials}\label{sec:basis construction}

For $a\geq b\geq d$, we construct the set $\mathcal{B}_{a,b,d}\subset\overline{I}^d_{(a,b)}$ of elements of $\overline{R}$; the residue classes of these elements will be shown to be a $\bk$-basis for $(\overline{I}^d/\overline{I}^{d+1})_{(a,b)}$. We consider the following cases:

\begin{enumerate}
    \item neither $a=b=d$ nor $a-1=b-1=d$ hold (we call this the \emph{general case});
    \item $a=b=d$;
    \item $a-1=b-1=d$.
\end{enumerate}

In case (1), we will see that the basis is indexed by $\Tab^{(2)}_{(a+b-d,d)}$. In case (2), the basis will be indexed by the subset of $\Tab^{(2)}_{(a,a)}$ consisting of tableaux with non-identical rows. In case (3), the basis will be indexed by the union of $\Tab^{(2)}_{(a+1,a-1)}$ and the subset of $\Tab^{(2)}_{(a,a)}$ consisting of tableaux with identical rows.

In each case, we begin with a $2$-semistandard tableau $T$, and we modify it to obtain a semistandard one $\widetilde{T}$. The basis element $G_{T,a}$ corresponding to $T$ will be given by $\overline{F}_{\widetilde{T},a}$.

\subsection{Case (1): general case}
Suppose $a\geq b\geq d$ are such that neither $a=b=d$ nor $a-1=b-1=d$ hold, and let $T\in \Tab^{(2)}_{(a+b-d,d)}$.
Note that $a+b-d>d$. We construct a tableau $\widetilde{T}\in \Tab_{(a+b-d,d)}$ as follows.

Suppose $T$ has a subtableau of the form
\begin{displaymath}
\ytableausetup
{mathmode,boxsize=2em}
\begin{ytableau}
\alpha_i & \alpha_{i+1} & \cdots & \alpha_k \\
\alpha_i & \alpha_{i+1} & \cdots & \alpha_k
\end{ytableau}
\end{displaymath}
that is not strictly contained in another such subdiagram. 

If $k<d$, then the $i,\ldots,k+1$ columns of $T$ are given by
\begin{displaymath}
\ytableausetup
{mathmode,boxsize=2em}
\begin{ytableau}
\alpha_i & \alpha_{i+1} & \alpha_{i+2} & \cdots & \alpha_k & \alpha_{k+1} \\
\alpha_i & \alpha_{i+1} & \alpha_{i+2} & \cdots & \alpha_k & \beta_{k+1}
\end{ytableau}
\end{displaymath}
where $\alpha_{k+1}<\beta_{k+1}$. Let the $i,\ldots,k+1$ columns of $\widetilde{T}$ be given by
\begin{displaymath}
\ytableausetup
{mathmode,boxsize=2em}
\begin{ytableau}
\alpha_i & \alpha_i & \alpha_{i+1} & \alpha_{i+2} & \cdots & \alpha_{k-1} & \alpha_k \\
\alpha_{i+1} & \alpha_{i+2} & \alpha_{i+3} & \cdots & \alpha_k & \alpha_{k+1} & \beta_{k+1}
\end{ytableau}
\end{displaymath}
In words, the sequences $\alpha_{i+1},\ldots \alpha_k,\alpha_{k+1}$ in the first row of $T$ and $\alpha_i,\alpha_{i+1},\ldots,\alpha_k$ in the second row of $T$ are swapped.

Similarly, if $k=d$, then the $i,\ldots,d+1$ columns of $\widetilde{T}$ are given by
\begin{displaymath}
\ytableausetup
{mathmode,boxsize=2em}
\begin{ytableau}
\alpha_i & \alpha_i & \alpha_{i+1} & \alpha_{i+2} & \cdots & \alpha_{d-1} & \alpha_d \\
\alpha_{i+1} & \alpha_{i+2} & \alpha_{i+3} & \cdots & \alpha_d & \alpha_{d+1} 
\end{ytableau}
\end{displaymath}
Note in this case that $\alpha_{d+1}$ exists since $a+b-d>d$.

Do this procedure whenever a column in $T$ has repeated entries to fill in the corresponding entries of $\widetilde{T}$; all other entries of $\widetilde{T}$ are the same as those in $T$. We then have the following:
\begin{itemize}
    \item $\widetilde{T}$ is well-defined. If there are repeated indices in a column, then we can always eventually find a column after it with different indices in the column to apply the first case of swapping above, or we reach the box $\alpha_{d+1}$, and we apply the second case of swapping above. This does not introduce any columns with repeated indices. In addition, if $T$ has two subdiagrams of the above forms, then performing the swap for one subdiagram does not affect columns of the other subdiagram, and so the order in which we perform the swaps does not matter.
    \item $\widetilde{T}$ has shape $(a+b-d,d)$.
    \item $\widetilde{T}$ is semistandard. In the columns of $\widetilde{T}$ affected by swaps, columns are now strictly increasing, since $\alpha_i<\alpha_{i+1}<\cdots < \alpha_k$, and $\alpha_k<\alpha_{k+1}<\beta_{k+1}$. Other columns remain the same, and so all columns of $\widetilde{T}$ are strictly increasing. Rows are weakly increasing, as swapping does not introduce strict decreases.
\end{itemize}

Let $F_{\widetilde{T},a}\in I^d_{(a,b)}\subset R$. Then define $G_{T,a}$ to be
\[G_{T,a}\coloneqq\overline{F}_{\widetilde{T},a}\in\overline{I}^d_{(a,b)}.\]
The construction above defines a map

\begin{align*}
    \Phi_{a,b,d}:  \Tab^{(2)}_{(a+b-d,d)} &\to \Tab_{(a+b-d,d)} \\
    T &\mapsto \widetilde{T}.
\end{align*}

\begin{example}
Let $a=5, b=d=4$. If $T$ is given by
        \begin{displaymath}
\ytableausetup
{mathmode,boxsize=1.5em}
\begin{ytableau}
1 & 2 & 3 & 5 & 6 \\
1 & 2 & 4 & 5  
\end{ytableau}
\end{displaymath}
then its image $\widetilde{T}$ is given by
        \begin{displaymath}
\ytableausetup
{mathmode,boxsize=1.5em}
\begin{ytableau}
1 & 1 & 2 & 5 & 5 \\
2 & 3 & 4 & 6  
\end{ytableau}
\end{displaymath}
and thus, in $\overline{R}$,
\begin{align*}
G_{T,5} &= (x_1y_2 + x_2y_1)(x_1y_3+x_3y_1)(x_2y_4+x_4y_2)(x_5y_6+x_6y_5)x_5 \\
&= x_1x_2x_5x_6y_1y_2y_5(x_3y_4+x_4y_3).
\end{align*}
\end{example}

\subsection{Case (2): $a=b=d$}
We now consider the special case $a=b=d$. Here, the basis $\mathcal{B}_{a,a,a}$ will be indexed by a subset of $\Tab^{(2)}_{(a,a)}$.

Let $T\in \Tab^{(2)}_{(a,a)}$ be such that the two rows are not identical. For sequences of identical columns not involving the last column (the last column does not have a repeated index), then do the swaps as above in the general case. 

Now consider sequences of identical columns involving the last column, so $\alpha_a=\beta_a$. Then $T$ contains
\begin{displaymath}
\ytableausetup
{mathmode,boxsize=2em}
\begin{ytableau}
\alpha_{i-1} & \alpha_i & \alpha_{i+1} & \cdots & \alpha_{a-1} & \alpha_a \\
\beta_{i-1} & \alpha_{i} & \alpha_{i+1} & \cdots & \alpha_{a-1} & \alpha_{a} 
\end{ytableau}
\end{displaymath}
where $\alpha_{i-1}<\beta_{i-1}$. Note that such an $(i-1)^\text{st}$ column exists, since the two rows of $T$ are not identical. Let the $i-1,\ldots,a$ columns of $\widetilde{T}$ be given by
\begin{displaymath}
\ytableausetup
{mathmode,boxsize=2em}
\begin{ytableau}
\alpha_{i-1} & \beta_{i-1} & \alpha_{i} & \cdots & \alpha_{a-2} & \alpha_{a-1} \\
\alpha_{i} & \alpha_{i+1} & \alpha_{i+2} & \cdots & \alpha_{a} & \alpha_{a} 
\end{ytableau}
\end{displaymath}
In words, the sequences $\alpha_i,\ldots,\alpha_a$ of the first row and $\beta_{i-1},\alpha_i,\ldots,\alpha_{a-1}$ of the second row are swapped. Note that this subtableau of $\widetilde{T}$ is now semistandard.

If $T$ has repeats in columns right before the $(i-1)^\text{st}$ column, then the swapping processes commute: the first kind of swap only involves the box in row $1$, column $(i-1)$ and boxes in earlier columns, while the second kind of swap only involves the box in row $2$, column $(i-1)$ and boxes in later columns.

In this case, $\widetilde{T}$ is still semistandard. If the columns $i-2,i-1,i$ of $T$ are 
        \begin{displaymath}
\ytableausetup
{mathmode,boxsize=2em}
\begin{ytableau}
\alpha_{i-2} & \alpha_{i-1} & \alpha_i \\
\alpha_{i-2} & \beta_{i-1} & \alpha_i 
\end{ytableau}
\end{displaymath}
then these columns of $\widetilde{T}$ are
        \begin{displaymath}
\ytableausetup
{mathmode,boxsize=2em}
\begin{ytableau}
\alpha_{i-2} & \alpha_{i-2} & \beta_{i-1} \\
\alpha_{i-1} & \alpha_i & \alpha_i 
\end{ytableau}
\end{displaymath}
which is semistandard.

Let
\[G_{T,a}\coloneqq\overline{F}_{\widetilde{T},a}\in\overline{I}^a_{(a,a)}.\]
Let $\mathcal{D}\subset\Tab^{(2)}_{(a,a)}$ denote the set of $2$-semistandard Young tableaux of shape $(a,a)$ that do not have repeated rows. Then the above construction defines a map
\[\Phi_{a,a,a}:\mathcal{D}\to\Tab_{(a,a)}.\]

\begin{example}
    If $T$ is given by
        \begin{displaymath}
\ytableausetup
{mathmode,boxsize=1.5em}
\begin{ytableau}
1 & 2 & 3 & 5 & 6 \\
1 & 2 & 4 & 5 & 6
\end{ytableau}
\end{displaymath}
then its image $\widetilde{T}$ is given by
        \begin{displaymath}
\ytableausetup
{mathmode,boxsize=1.5em}
\begin{ytableau}
1 & 1 & 2 & 4 & 5 \\
2 & 3 & 5 & 6 & 6
\end{ytableau}
\end{displaymath}
\end{example}

\subsection{Case (3): $a-1=b-1=d$}
In this case, $(a+b-d,d)=(a+1,a-1)$. First suppose $T\in \Tab^{(2)}_{(a+1, a-1)}$. Then perform the same construction as in the $a>b>d$ case to obtain $\widetilde{T}$ and $G_{T,a}$.

Now suppose $T\in \Tab^{(2)}_{(a,a)}$ and is of the form
\begin{displaymath}
\ytableausetup
{mathmode,boxsize=1.5em}
\begin{ytableau}
\alpha_1 & \alpha_2 & \cdots & \alpha_a \\
\alpha_1 & \alpha_2 & \cdots & \alpha_a
\end{ytableau}
\end{displaymath}
Then
\begin{displaymath}
\ytableausetup
{mathmode,boxsize=2em}
\begin{ytableau}
\alpha_1 & \alpha_2 & \cdots & \alpha_{a-1} & \alpha_a & \alpha_a \\
\alpha_1 & \alpha_2 & \cdots & \alpha_{a-1}
\end{ytableau}
\end{displaymath}
is a tableau of shape $(a+1,a-1)$ with the same weight as $T$. Note that it is not $2$-semistandard. Apply the swapping process to obtain $\widetilde{T}$
\begin{displaymath}
\ytableausetup
{mathmode,boxsize=2em}
\begin{ytableau}
\alpha_1 & \alpha_1 & \cdots & \alpha_{a-2} & \alpha_{a-1} & \alpha_a \\
\alpha_2 & \alpha_3 & \cdots & \alpha_{a}
\end{ytableau}
\end{displaymath}
Then $\widetilde{T}\in \Tab_{(a+1,a-1)}$. The above construction defines a map
\[\Phi_{a,a,a-1}:\Tab^{(2)}_{(a+1,a-1)}\cup\,(\Tab^{(2)}_{(a,a)}\setminus\mathcal{D})\to\Tab_{(a+1,a-1)}.\]
The basis $\mathcal{B}_{a,a,a-1}$ will therefore be indexed by tableaux in $\Tab^{(2)}_{(a,a)}$ with identical rows and $\Tab^{(2)}_{a+1,a-1}$.

\begin{example}
    If $T$ is given by
        \begin{displaymath}
\ytableausetup
{mathmode,boxsize=1.5em}
\begin{ytableau}
1 & 2 & 4 & 5 \\
1 & 2 & 4 & 5  
\end{ytableau}
\end{displaymath}
then its image $\widetilde{T}$ is given by
        \begin{displaymath}
\ytableausetup
{mathmode,boxsize=1.5em}
\begin{ytableau}
1 & 1 & 2 & 4 & 5 \\
2 & 4 & 5  
\end{ytableau}
\end{displaymath}
\end{example}



\subsection{2-straight tableaux}

In the previous subsection, we defined the sets $\mathcal{B}_{a,b,d}\subset\overline{I}^d_{(a,b)}$. They consist of images of classical standard monomials in $\overline{R}$.

\begin{definition}
    The elements of $\mathcal{B}_{a,b,d}$ for any $a\geq b\geq d$ are called \textbf{$2$-standard monomials}. 
\end{definition}

Our goal is now to show that the images of all classical standard monomials are in the span of $2$-standard monomials. We do this by analyzing semistandard tableaux.

\begin{definition}
    Let $a\geq b\geq d$. We say a semistandard tableau $T\in \Tab_{(a+b-d,d)}$ is \emph{$2$-straight} with respect to $(a,b,d)$ if it is in the image of $\Phi_{a,b,d}$. 

    We say $T$ \emph{can be $2$-straightened} with respect to $(a,b,d)$ if there exist some $m\geq 0$ and $2$-straight tableaux $T_1,\ldots, T_m\in \Tab_{(a+b-d,d)}$ such that $\overline{F}_{T,a}$ can be written modulo $\overline{I}^{d+1}$ as
    \[\overline{F}_{T,a}\equiv \sum_{i=1}^m \overline{F}_{T_i,a}\in\overline{I}^d/\overline{I}^{d+1}.\]
    The case where $m=0$ means that $\overline{F}_{T,a}\in\overline{I}^{d+1}$, and this includes if $\overline{F}_{T,a}=0$ in $\overline{R}$.
\end{definition}

In what follows, we characterize $2$-straight tableaux. In \S\ref{sec:span}, we will use this characterization and the $2$-straightening rules to show that all semistandard tableaux can be $2$-straightened.

\subsubsection{$2$-straight tableaux for $a=b=d$}
We characterize semistandard tableaux of shape $(a,a)$ that are $2$-straight with respect to $(a,a,a)$. Let $T\in \Tab_{(a,a)}$ be given by the product of minors $\gamma_1\cdots\gamma_a$, with $\gamma_i=[\alpha_i,\beta_i]$.

\begin{proposition}\label{prop:a=b=d 2-straightened}
    A semistandard tableau $T\in \Tab_{(a,a)}$ is $2$-straight with respect to $(a,a,a)$ if and only if the following conditions hold:
    \begin{enumerate}
    \item An index $i$ does not appear more than two times in $T$, and $\gamma_i\neq\gamma_j$ for all $i\neq j$.
    \item If $\alpha_i=\alpha_{i+1}$ for some $i=2,\ldots,a$, then $\beta_{i-1}<\alpha_i$.
    \item If $\alpha_i=\alpha_{i+1}$ for some $i=1,\ldots,a-1$, then 
    $\beta_j<\alpha_{j+2}$ for
    \[j=\min\{j\geq i:\beta_j\neq\alpha_{j+2}\}\]
    if such a $j$ exists. If such a $j$ does not exist, then $\beta_{a-1}<\beta_a$.
    \item The second row is strictly increasing except possibly in the last column:
    \[\beta_1<\beta_2<\cdots<\beta_{a-2}<\beta_{a-1}\leq\beta_a.\]
    \item If $\beta_{a-1}=\beta_a$, then $\beta_{j-2}<\alpha_j$ for
    \[j=\max\{j\leq a:\beta_{j-2}\neq\alpha_j\}\]
    if such a $j$ exists. If such a $j$ does not exist, then $\alpha_1<\alpha_2$.
\end{enumerate}
\end{proposition}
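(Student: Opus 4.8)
The plan is to prove the characterization by tracking precisely what $\Phi_{a,a,a}$ does to columns and then showing that (1)--(5) cut out exactly its image. First I would record the \emph{signature} of each of the two swaps used to build $\widetilde{T}=\Phi_{a,a,a}(T')$ from a $2$-semistandard $T'\in\mathcal{D}$. A maximal run of equal-entry columns $[\alpha_i,\alpha_i],\ldots,[\alpha_k,\alpha_k]$ not meeting the last column, together with the following column $[\alpha_{k+1},\beta_{k+1}]$, is replaced by $[\alpha_i,\alpha_{i+1}],[\alpha_i,\alpha_{i+2}],[\alpha_{i+1},\alpha_{i+3}],\ldots,[\alpha_{k-1},\alpha_{k+1}],[\alpha_k,\beta_{k+1}]$, so such a general swap creates a first-row repeat $\widetilde\alpha_i=\widetilde\alpha_{i+1}$ immediately followed by a ``staircase'' in which $\widetilde\beta_j=\widetilde\alpha_{j+2}$ for consecutive $j$, ending where the column $[\alpha_{k+1},\beta_{k+1}]$ forces $\widetilde\beta_k<\widetilde\alpha_{k+2}$. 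A run meeting the last column is handled by the second swap, which instead produces the second-row repeat $\widetilde\beta_{a-1}=\widetilde\beta_a$ preceded by the same staircase read from the right.

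For necessity I would verify each of (1)--(5) by tracing these two swaps. Condition (1) is immediate from weight preservation and the strict increase of both rows of $T'$: no index occurs more than twice, and columns stay distinct because a repeated column would force a run of two equal columns in $\widetilde{T}$, which neither swap ever creates. The inequality $\beta_{i-1}<\alpha_i$ of (2) comes from the strict increase of the second row of $T'$ at the column preceding the run, using that the bottom entry there is unchanged even when that column is the right boundary of an earlier region. Condition (3) is exactly the termination of the staircase created by a general swap, and conditions (4) and (5) are the analogues for the last-column swap: (4) records that only a last-column run can create a second-row repeat, since general swaps leave the second row locally strict, and (5) is the right-hand termination of that run's staircase.

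The harder direction is sufficiency, where I would build the inverse. Given $T\in\Tab_{(a,a)}$ satisfying (1)--(5), the first-row repeats located via (2)--(3) and the possible last-column repeat located via (4)--(5) mark precisely the images of the swapped regions; I would un-swap each region by collapsing its staircase back to a run of equal-entry columns, producing a candidate $T'$ of the same weight. The substance of the proof is then to check that $T'$ is a genuine $2$-semistandard tableau with non-identical rows---that both rows are \emph{strictly} increasing and $\alpha_i\le\beta_i$ throughout---and that $\Phi_{a,a,a}(T')=T$. I expect the main obstacle to be the bookkeeping around adjacent or chained swapped regions: the staircases of (3) and (5) can abut, and it is precisely the ``first/last place where $\beta_j\ne\alpha_{j+2}$'' conditions that guarantee each staircase terminates so that un-swapping restores strict monotonicity without merging two regions. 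Establishing that these break conditions are simultaneously necessary and sufficient for reversibility---so that the detected regions are disjoint, account for every departure from $2$-semistandardness, and collapse to distinct strictly increasing rows---is the delicate combinatorial core of the argument.
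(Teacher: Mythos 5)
Your proposal is correct and follows essentially the same route as the paper: necessity by tracing the two swap types (the general swap leaving a first-row repeat followed by the staircase $\beta_j=\alpha_{j+2}$ terminated as in condition (3), and the last-column swap leaving $\beta_{a-1}=\beta_a$ with the right-hand staircase of condition (5)), and sufficiency by un-swapping left-to-right along the first row and right-to-left from the last column, with (3) and (5) guaranteeing termination. The ``delicate core'' you flag---disjointness and commutation of the detected regions---is exactly what the paper disposes of by noting that the two kinds of swaps touch disjoint sets of boxes, so your plan, carried out, reproduces the paper's proof.
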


\begin{proof}
    First suppose $T\in \Tab_{(a,a)}$ is $2$-straight, with $T=\Phi_{a,a,a}(S)$ for $S\in \Tab^{(2)}_{(a,a)}$ with non-identical rows. We show that each of the conditions must hold.
    \begin{enumerate}
        \item Any index $i$ does not appear more than twice in $S$, and therefore does not appear more than twice in $T$. 
        
        Now suppose $\gamma_i=\gamma_j=[\alpha,\beta]$ for some $i\neq j$, with $\alpha<\beta$. Since $T$ is semistandard, we can assume that $j=i+1$. The only way $\alpha$ can appear twice in the first row is if the $i^\text{th}$ column of $S$ has both boxes indexed by $\alpha$. Then since $\gamma_i=[\alpha,\beta]$, this necessarily means that the box of $S$ in the first row after $\alpha$ is $\beta$. Furthermore, since $S$ is $2$-semistandard and $\beta$ needs to appear somewhere in the second row, we must have that the $i^\text{th}$ and $(i+1)^\text{st}$ columns of $S$ are
        \begin{displaymath}
            \ytableausetup
            {mathmode,boxsize=2em}
            \begin{ytableau}
            \alpha & \beta   \\
            \alpha &  \beta 
            \end{ytableau}
            \end{displaymath}
        Then since we have consecutive columns of $S$ with repeated entries in the columns, our construction of $\Phi_{a,a,a}(S)=T$ cannot possibly yield the repeated column $[\alpha,\beta]$ in $T$.
        \item Suppose $\alpha_i=\alpha_{i+1}$ for some $i=2,\ldots,a$. Then the $(i-1),i,(i+1)^{\text{st}}$ columns of $S$ are of the form 
        
            \begin{displaymath}
            \ytableausetup
            {mathmode,boxsize=2em}
            \begin{ytableau}
            \quad & \alpha_i & \beta_i \\
            \beta_{i-1} & \alpha_{i+1} & \quad
            \end{ytableau}
            \end{displaymath}
        since the only way for two indices to appear in the first row is if $\alpha_i=\alpha_{i+1}$ appear in the same column of $S$.
        Therefore, we must have that $\beta_{i-1}<\alpha_{i}=\alpha_{i+1}$.
        \item Suppose that $\alpha_i = \alpha_{i+1}$. This means that $S$ has repeats in some columns. Firstly, suppose that there exists $j=\min\{j\geq i:\beta_j\neq\alpha_{j+2}\}$. Hence, $S$ has a subtableau of the form
        
            \begin{displaymath}
            \ytableausetup
            {mathmode,boxsize=2em}
            \begin{ytableau}
            \alpha_i & \beta_i & \dots & \beta_j & \alpha_{j+2}\\
            \alpha_{i+1} & \alpha_{i+2} & \dots & \beta_{j+1} & \beta_{j+2} 
            \end{ytableau}
            \end{displaymath}
        with $\beta_k = \alpha_{k+2}$ for every $i \leq k < j$. Therefore, we must have that $\beta_{j}<\alpha_{j+2}$. Now suppose that such $j$ does not exist. In this case, $S$ has a subtableau of the form 

            \begin{displaymath}
            \ytableausetup
            {mathmode,boxsize=2em}
            \begin{ytableau}
            \alpha_i & \beta_i & \dots & \beta_{a-2} & \beta_{a-1}\\
            \alpha_{i+1} & \alpha_{i+2} & \dots & \alpha_a & \beta_{a} 
            \end{ytableau}
            \end{displaymath}
with $\beta_k = \alpha_{k+2}$ for every $i \leq k \leq a-2$. This means that $\beta_{a-1}$ can only be equal or smaller than $\beta_a$. But if $\beta_{a-1} = \beta_{a}$, then $T$ is not the image of $S$, as we would have done the other kind of swap to obtain $\Phi_{a,a,a}(S)$. Thus, we have a contradiction and so $\beta_{a-1} < \beta_{a}$. 
            
        \item The second row of $S$ is in strictly increasing order. The only way to introduce a repeat in the second row of $T$ is if $S$ has a repeat in the last column, which forces $\beta_{a-1}=\beta_a$ in $T$.

        \item Suppose $\beta_{a-1} = \beta_a$. This means that $S$ has a repeat in the last column. Firstly, suppose that there exists $j=\max\{j\leq a:\beta_{j-2}\neq\alpha_j\}$. Hence, $S$ has a subtableau of the form
        
            \begin{displaymath}
            \ytableausetup
            {mathmode,boxsize=2em}
            \begin{ytableau}
            \beta_{j-2} & \dots & \beta_{a-2} & \beta_{a-1}\\
            \alpha_{j} & \dots & \alpha_{a}  & \beta_{a}
            \end{ytableau}
            \end{displaymath} 
with $\beta_{k-2} = \alpha_{k}$ for every $j < k \leq a$. Since $\beta_{j-2}\neq\alpha_j$, we must have that $\beta_{j-2}<\alpha_{j}$. Now suppose that such a $j$ does not exist. Then $S$ has the form 

            \begin{displaymath}
            \ytableausetup
            {mathmode,boxsize=2em}
            \begin{ytableau}
            \alpha_1 & \dots & \beta_{a-2} & \beta_{a-1}\\
            \alpha_{2} & \dots & \alpha_{a}  & \beta_{a}
            \end{ytableau}
            \end{displaymath} 
where $\beta_{k-2} = \alpha_{k}$ for every $2 < k \leq a$. This means that $\alpha_1$ can only be equal or smaller than $\alpha_2$. But if $\alpha_1 = \alpha_2$, then this means that the first column of $S$ has $\alpha_1=\alpha_2$ in both boxes. Then, one would need to apply the appropriate swapping procedure from these boxes. Since the rows of $S$ are not identical, there has to be a column somewhere in $S$ without a repeated index, and this would force $\beta_{k-2}<\alpha_k$ for some $k>2$, which contradicts the assumption.
    \end{enumerate}

    Suppose $T$ satisfies the above conditions. We outline how to obtain the $2$-semistandard tableau $S$ such that $\Phi_{a,a,a}(S)=T$. Suppose $\alpha_i=\alpha_{i+1}$ with $i$ minimal. Then swap $\beta_i$ and $\alpha_{i+1}$. Condition (2) ensures that $S$ will be strictly increasing in rows up until column $i$. Continue going from left to right and performing such swaps until the first row is strictly increasing. This process eventually terminates by condition (3). 

    Now, if $\beta_{a-1}=\beta_a$, then swap $\beta_{a-1}$ with $\alpha_a$. If $\beta_{a-2}=\alpha_a$, then swap $\beta_{a-2}$ with $\alpha_{a-1}$. Continue in this way until the bottom row is strictly increasing as well. This process eventually terminates by condition (5). 

    The remaining conditions ensure that $S$ is indeed $2$-semistandard.
\end{proof}

\begin{example}
Let $T$ be given by

        \begin{displaymath}
\ytableausetup
{mathmode,boxsize=1.5em}
\begin{ytableau}
1 & 2 & 2 & 3 \\
2 & 3 & 4 & 5  
\end{ytableau}
\end{displaymath}
Then $T$ is not 2-straight with respect to $(4,4)$ because there is an index appearing more than twice. However, tableaux with this property can be $2$-straightened trivially since $\overline{F}_{T,a}=0$ in $\overline{R}$.
\end{example}

\begin{example}\label{ex:non-straight a=b=d}
Let $T$ be given by

        \begin{displaymath}
\ytableausetup
{mathmode,boxsize=1.5em}
\begin{ytableau}
1 & 2 & 2 & 4 & 5 \\
3 & 3 & 6 & 7 & 7  
\end{ytableau}
\end{displaymath}
Note that $T$ is not 2-straight with respect to $(5,5)$ because it violates conditions (2), (4), and (5) of Proposition~\ref{prop:a=b=d 2-straightened}. We show in Example~\ref{ex:straightening} that $T$ can be $2$-straightened.
\end{example}

\subsubsection{$2$-straight tableaux for other cases}
We now characterize $2$-straight semistandard tableaux for all other cases of $a\geq b\geq d$. The proof follows as in the case $a=b=d$.

\begin{proposition}
    Suppose $a\geq b\geq d$ are such that they are not all equal. A semistandard tableau $T\in \Tab_{(a+b-d,d)}$ is $2$-straight with respect to $(a,b,d)$ if and only if the following conditions hold.
    \begin{enumerate}
    \item An index $i$ does not appear more than two times in $T$, and $\gamma_i\neq\gamma_j$ for $i\neq j$.
    \item If $\alpha_i=\alpha_{i+1}$ for some $i=2,\ldots,d$, then $\beta_{i-1}<\alpha_i$.
    \item If $\alpha_i=\alpha_{i+1}$, then let
    \[j=\min\{i\leq j\leq d:\beta_j\neq\alpha_{j+2}\}\]
    if such a $j$ exists.
    \begin{itemize}
        \item If $a-1=b=d$, then $\beta_j<\alpha_{j+2}$ if $j$ exists.
        \item If $a-1=b-1=d$ and $i=1$, then $\beta_j<\alpha_{j+2}$ if $j$ exists.
        \item In all other cases, then such a $j$ must exist, and $\beta_j<\alpha_{j+2}$.
    \end{itemize}
   
    \item The second row is strictly increasing throughout: $\beta_1<\cdots<\beta_d$.
    \item We have $\alpha_{d+1}<\cdots<\alpha_{a+b-d}$, i.e., the indices corresponding to $1$-minors are strictly increasing.
\end{enumerate}
\end{proposition}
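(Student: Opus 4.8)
The plan is to argue exactly as in the proof of Proposition~\ref{prop:a=b=d 2-straightened}, proving both implications by tracking the swaps that define $\Phi_{a,b,d}$ and then inverting them. For the forward direction I would write $T=\Phi_{a,b,d}(S)$ and examine each maximal block of repeated columns of $S$ together with the swap it triggers. Conditions (1) and (2) are obtained essentially verbatim from the $a=b=d$ case: the bound on repeated indices is inherited from $S$, whose rows are strictly increasing because $p=2$; the argument forbidding $\gamma_i=\gamma_j$ is unchanged; and condition (2) records that an entry $\beta_{i-1}$ sitting immediately to the left of a repeated first-row value $\alpha_i=\alpha_{i+1}$ in $T$ must originate from a strictly earlier column of $S$.

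The structural simplification relative to the $a=b=d$ case is that $a+b-d>d$, so the box $\alpha_{d+1}$ always exists. Every maximal repeated block of $S$ can therefore be swapped outward until it either meets a column with distinct entries or reaches $\alpha_{d+1}$, and the ``second kind'' of swap that was special to the $a=b=d$ construction never occurs. This yields conditions (4) and (5) immediately: the second row of $T$ is strictly increasing throughout, and the $1$-minor indices $\alpha_{d+1},\ldots,\alpha_{a+b-d}$ inherit strict increase directly from the first row of $S$.

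Condition (3) is the one genuinely new piece of bookkeeping, recording where each swap terminates. As in the $a=b=d$ computation, after swapping a block the relation $\beta_k=\alpha_{k+2}$ holds across its interior in $T$ and first fails at the column where the block met a distinct-entry column, producing $\beta_j<\alpha_{j+2}$ at the minimal such $j$. The subcases record precisely when this minimum fails to exist, namely when a block runs to the end of the first row so that no position $j+2$ remains to test. This happens only for shape $(d+1,d)$, i.e.\ $a-1=b=d$, and, among the reshaped identical-rows tableaux of Case (3), when $a-1=b-1=d$ and the block begins at $i=1$: there the reshaping places a duplicated terminal box at position $d+2$, so that $\beta_d=\alpha_{d+2}$ and $j$ does not exist. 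In every other shape there is a strictly larger $1$-minor box beyond the terminating column, forcing $j$ to exist.

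For the converse I would reconstruct $S$ by undoing the swaps from left to right, reversing the swap at the leftmost repeat $\alpha_i=\alpha_{i+1}$ and iterating. Condition (2) keeps rows strictly increasing up to the swapped column, condition (3) guarantees correct termination, condition (4) shows no last-column swap is ever needed, and condition (5) preserves the strictly increasing $1$-minor tail; conditions (1) and (3) together ensure the reconstructed $S$ is genuinely $2$-semistandard and lies in the correct domain of $\Phi_{a,b,d}$, whence $\Phi_{a,b,d}(S)=T$ by construction. I expect the main obstacle to be precisely the subcase analysis of condition (3): one must determine, shape by shape, whether a maximal block can reach $\alpha_{d+1}$, and in particular isolate the reshaped identical-rows tableaux of Case (3), which are the only inputs forced to have their block begin at $i=1$ with a duplicated terminal box.
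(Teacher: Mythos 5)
Your proposal is correct and follows essentially the same route as the paper: the paper likewise defers conditions (1), (2), (4), (5) to the argument of Proposition~\ref{prop:a=b=d 2-straightened} and concentrates on condition (3), where it performs exactly your subcase analysis --- $j$ can fail to exist only for shape $(d+1,d)$ (i.e.\ $a-1=b=d$) or for the reshaped identical-rows tableaux of the $a-1=b-1=d$ construction with $i=1$, while in all other cases non-existence of $j$ would force a repeated entry $\alpha_{d+2}$ in the first row of $S$, contradicting $2$-semistandardness. Your observation that $a+b-d>d$ guarantees the box $\alpha_{d+1}$ exists, so the last-column swap of the $a=b=d$ case never occurs, is precisely the simplification the paper relies on.
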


\begin{proof}
    We prove why condition (3) is necessary. The rest of the proof follows as in the $a=b=d$ case. Suppose $T=\Phi_{a,b,d}(S)$ is a $2$-straight semistandard tableau for a $2$-semistandard tableau $S$. Let $j$ be the index as defined in condition (3).

    If $a-1=b=d$, then such a $j$ does not exist if $S$ is of the form 
    \begin{displaymath}
            \ytableausetup
            {mathmode,boxsize=2em}
            \begin{ytableau}
            \alpha'_1 & \dots & \alpha'_{a-2} & \alpha'_{a-1} & \alpha'_{a}\\
            \alpha'_1 & \dots & \alpha'_{a-2}  & \alpha'_{a-1}
            \end{ytableau}
            \end{displaymath} 

    If $a-1=b-1=d$ and $i=1$, then again $j$ does not exist if $S$ is of shape $(a,a)$ and has identical rows.

    Now suppose we are in any other case, i.e., if $a-1=b-1=d$ with $i>1$, or other possible $a,b,d$. If such a $j$ does not exist, then this would imply that $S$ has columns $i$ through $d+2$ of the form
    \begin{displaymath}
            \ytableausetup
            {mathmode,boxsize=2em}
            \begin{ytableau}
            \alpha_i & \alpha_{i+2} & \alpha_{i+3} & \dots & \alpha_{d} & \alpha_{d+1} & \alpha_{d+2} & \alpha_{d+2}\\
            \alpha_i & \alpha_{i+2} & \alpha_{i+3} & \dots & \alpha_{d}  & \alpha_{d+1} 
            \end{ytableau}
            \end{displaymath} 
    But then $S$ would not be $2$-semistandard, as the first row would not be strictly increasing.
\end{proof}

\section{Span of 2-standard monomials}\label{sec:span}

Recall that $\mathcal{B}_{a,b,d}$ is the set of $2$-standard monomials in $\overline{I}^d_{(a,b)}$. The main result of this section is the following.

\begin{theorem}\label{thm:main span}
    For $a\geq b\geq d$, the residue classes of the elements of $\mathcal{B}_{a,b,d}$ span the vector space $(\overline{I}^d/\overline{I}^{d+1})_{(a,b)}$.
\end{theorem}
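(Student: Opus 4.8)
The plan is to combine the classical standard monomial theory for $R$ with the $2$-straightening rules established in \S\ref{sec:prelim}. I would first record the easy half of the statement. By classical standard monomial theory, the residue classes $\{F_{T,a}:T\in\Tab_{(a+b-d,d)}\}$ form a basis of $(I^d/I^{d+1})_{(a,b)}$. Since the bigraded quotient map $R\to\overline{R}$ carries $I^d$ onto $\overline{I}^d$ and $I^{d+1}$ into $\overline{I}^{d+1}$, it induces a surjection $(I^d/I^{d+1})_{(a,b)}\twoheadrightarrow(\overline{I}^d/\overline{I}^{d+1})_{(a,b)}$, so the images $\{\overline{F}_{T,a}:T\in\Tab_{(a+b-d,d)}\}$ already span the target. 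Because $\mathcal{B}_{a,b,d}=\{\overline{F}_{\widetilde{T},a}:\widetilde{T}\text{ is }2\text{-straight}\}$ and the $2$-straight tableaux are exactly the images of $\Phi_{a,b,d}$, the theorem reduces to showing that \emph{every} semistandard $T\in\Tab_{(a+b-d,d)}$ can be $2$-straightened, i.e.\ that $\overline{F}_{T,a}$ lies modulo $\overline{I}^{d+1}$ in the span of the $\overline{F}_{\widetilde{T},a}$ with $\widetilde{T}$ $2$-straight.

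Next I would dispose of the degenerate tableaux. In $\overline{R}$ every variable squares to zero, so $[\alpha,\beta]^2=(x_\alpha y_\beta+x_\beta y_\alpha)^2=0$ as well. Hence if $T$ has some index occurring at least three times, two equal minors $\gamma_i=\gamma_j$, or a repeated variable among the trailing factors $x_{\alpha_{d+1}}\cdots y_{\alpha_{a+b-d}}$, then $\overline{F}_{T,a}=0$ and $T$ is trivially $2$-straightened with $m=0$. This leaves the tableaux in which each index appears at most twice and all minors are distinct, which are precisely the candidates governed by Proposition~\ref{prop:a=b=d 2-straightened} and its analogue for the remaining cases.

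For such a $T$ that is not yet $2$-straight, I would run a straightening algorithm driven by the first violated condition. The failures split into two families: disorder in the top row caused by a repeated column $\alpha_i=\alpha_{i+1}$ that is not followed by the required strict increase, and disorder in the second row or in the trailing $1$-minor indices. To repair a top-row defect I would apply the column swaps of Corollaries~\ref{cor:2-straightening swap 3,3} and~\ref{cor:2-straightening swap 3,2}, which leave $\overline{F}_{T,a}$ unchanged while moving a repeated entry into the position demanded by $2$-straightness; to reorder an out-of-order triple in the bottom row I would apply the three-term relation of Corollary~\ref{cor:2-straightening pairs}, and to reorder the trailing indices the second, linear Pl\"ucker relation of Proposition~\ref{prop:classical straightening}, both of which rewrite $\overline{F}_{T,a}$ as a sum of $\overline{F}_{T',a}$ with the offending pair or triple reordered. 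Mirroring the reconstruction in the proof of Proposition~\ref{prop:a=b=d 2-straightened}, I would straighten the top row from left to right and then the second row and trailing indices, so that each phase interacts only with the part of the tableau the relations are designed for.

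The step I expect to be the main obstacle is proving termination while remaining inside the span of semistandard tableaux of the fixed shape. I would fix a well-founded order on tableaux --- for instance reverse lexicographic order on the initial monomials $x_{\alpha_1}\cdots x_{\alpha_a}y_{\beta_1}\cdots y_{\beta_d}y_{\alpha_{a+1}}\cdots y_{\alpha_{a+b-d}}$ recorded for $F_{T,a}$ in \S\ref{sec:prelim}, or an equivalent lexicographic statistic on the entries --- and verify that each application of Corollary~\ref{cor:2-straightening pairs} produces strictly smaller tableaux while each swap from Corollaries~\ref{cor:2-straightening swap 3,3}--\ref{cor:2-straightening swap 3,2} moves $T$ strictly closer to $2$-straight form without reintroducing an earlier defect. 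The genuine difficulty is concentrated at the boundary --- the last column when $a=b=d$, and the passage between the $d$-th and $(d+1)$-st columns in general --- where the $2$-straightness conditions differ from those in the interior; there one must check that the relations still apply and that no oscillation arises. Accordingly I would treat the three cases $a=b=d$, $a-1=b-1=d$, and the general case in parallel, reducing the latter two to the combinatorics worked out for $a=b=d$, exactly as the characterization of $2$-straight tableaux was organized in \S\ref{sec:prelim}.
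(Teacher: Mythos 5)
Your reduction is correct and matches the paper's: the images of the classical standard monomials span $(\overline{I}^d/\overline{I}^{d+1})_{(a,b)}$, the degenerate tableaux (an index thrice, $\gamma_i=\gamma_j$, a repeated trailing variable) die in $\overline{R}$, and the problem becomes showing every remaining semistandard $T$ can be $2$-straightened. But from that point on your proposal is a plan rather than a proof, and the plan has a concrete flaw: you treat Corollaries~\ref{cor:2-straightening swap 3,3}, \ref{cor:2-straightening pairs}, and \ref{cor:2-straightening swap 3,2} as generic local rewriting rules that can be applied at ``the first violated condition,'' but their hypotheses require the full interlocked prefix $\alpha,\alpha,\beta_1,\ldots,\beta_m$ over $\beta_1,\ldots,\beta_m$ (the chain $\beta_j=\alpha_{j+2}$) coming from Proposition~\ref{prop:main 2-straightening rule}. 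A left-to-right scan of an arbitrary semistandard $T$ has no reason to present such a configuration. The paper manufactures it by inducting on the number of columns: the key Lemma~\ref{lem:a=b=d trunc 2-ss} lets one assume $T^{(a-1)}$ is already $2$-straight, and then the characterization in Proposition~\ref{prop:a=b=d 2-straightened} forces any remaining defect to sit inside exactly the chain pattern the corollaries need. Your proposal omits this induction entirely, and with it the paper's actual termination mechanism, which is not a monomial order at all: when $2$-straightening the truncation destroys semistandardness, one alternates classical straightening with $2$-straightening of the first $a-1$ columns, and the process halts because each classical straightening step strictly increases the entry $\alpha_a'$ in the last column, which is bounded. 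Your substitute --- revlex on initial monomials decreasing under every move --- is unverified and dubious, since the swap corollaries preserve $\overline{F}_{T,a}$ while permuting entries, and Corollary~\ref{cor:2-straightening pairs} replaces one tableau by a sum of two whose initial monomials need not both be smaller; the paper sidesteps this entirely by proving in each case (Lemmas~\ref{lem:a=b=d alpha_d-1=alpha_d}--\ref{lem:a=b=d beta_a-1=beta_a}) that the output tableaux are \emph{already} $2$-straight, so no further recursion on them is needed.

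A second missing mechanism: in the cases with $a>b$ or $a-1=b-1=d$, several non-$2$-straight tableaux are handled not by shape-preserving swaps but by showing $\overline{F}_{T,a}\in\overline{I}^{d+1}$, or congruent mod $\overline{I}^{d+1}$ to a $2$-straight tableau via identities that \emph{change the shape} --- e.g.\ Lemma~\ref{lem:repeat indices in tails} moves a box to the second row to exhibit an extra minor, Lemma~\ref{lem:repeat indices tails special} converts $x_{\alpha_a}y_{\alpha_a}$ into a product of two minors, and cases (2) and (4) of the final lemma of \S\ref{subsec:span other cases} exhibit $\overline{F}_{T,a}$ as $\overline{F}_{S,a}$ plus a $2$-straight term with $S$ of shape $(a+b-d-1,d+1)$ or as $\overline{F}_{S,a-1}[\alpha_a,\alpha_i]$. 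Your algorithm, built exclusively from shape-preserving local moves on $\Tab_{(a+b-d,d)}$, cannot reach these conclusions, yet they are indispensable for condition (5) and for condition (3) in the general case. So while you correctly located where the difficulty lives (termination and the boundary columns), the two ideas that actually resolve it --- the column induction with the increasing-last-entry termination argument, and the drop into $\overline{I}^{d+1}$ via shape-changing relations --- are absent from the proposal.
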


We consider the following cases:
\begin{enumerate}
    \item $a=b=d$;
    \item $a-1=b=d$;
    \item $a-1=b-1=d$;
    \item all other possible $a\geq b\geq d$.
\end{enumerate}
We consider case (1) in \S \ref{subsec:span a=b=d}, case (2) in \S \ref{subsec:span a-1=b=d}, and cases (3) and (4) in \S \ref{subsec:span other cases}. For each case, we show that every semistandard tableau $T\in\Tab_{(a+b-d,d)}$ can be $2$-straightened with respect to $(a,b,d)$ by induction on the number of columns. This proves the theorem, since the residue classes modulo $\overline{I}^{d+1}$ of $\overline{F}_{T,a}$ for $T\in\Tab_{(a+b-d,d)}$ span $(\overline{I}^d/\overline{I}^{d+1})_{(a,b)}$.

If $T\in\Tab_{(a+b-d,d)}$ and $1\leq i\leq a+b-d$, then let $T^{(i)}$ denote the subtableau of $T$ consisting of the first $i$ columns of $T$. We also conflate a semistandard tableau $T$ with the image of its standard monomial $\overline{F}_{T,a}$ in $\overline{R}$.

\subsection{Case (1): $a=b=d$}\label{subsec:span a=b=d}

We proceed by induction on $a$. First, note that for any $T\in\Tab_{(a,a)}$ for $a=1,2$, either $\overline{F}_{T,a}=0$ in $\overline{R}$, or $T$ is already $2$-straight.

We now assume that $a\geq 3$, and that the desired statement holds for $a-1$. For a tableau $T\in\Tab_{(a,a)}$, we want to assume that $T^{(a-1)}$ is $2$-straight. However, the $2$-straightening process for $T^{(a-1)}$ could cause $T$ to no longer be semistandard. The following key lemma allows us to assume that this is not the case.

\begin{lemma}\label{lem:a=b=d trunc 2-ss}
    Let $a\geq 3$, and $T\in \Tab_{(a,a)}$. Further suppose that semistandard tableaux of shape $(a-1,a-1)$ can be $2$-straightened. Then $\overline{F}_{T,a}$ can be written modulo $\overline{I}^{d+1}$ as
    \[\overline{F}_{T,a}\equiv \sum_{i=1}^m\overline{F}_{T_i,a},\]
    where $T_i\in \Tab_{(a,a)}$ is semistandard and $T_i^{(a-1)}$ is $2$-straight.
\end{lemma}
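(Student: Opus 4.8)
The plan is to peel off the last column of $T$ and apply the hypothesis to what remains. Write $\gamma_a=[\alpha_a,\beta_a]$ for the minor indexed by the last column, so that, since $d=a$ and there are no $x$- or $y$-singletons, $\overline{F}_{T,a}=\overline{F}_{T^{(a-1)},a-1}\cdot\gamma_a$, where $T^{(a-1)}\in\Tab_{(a-1,a-1)}$. By hypothesis $T^{(a-1)}$ can be $2$-straightened, so $\overline{F}_{T^{(a-1)},a-1}\equiv\sum_j\overline{F}_{S_j,a-1}\pmod{\overline{I}^{a}}$ with each $S_j$ a $2$-straight tableau of shape $(a-1,a-1)$. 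Multiplying by $\gamma_a\in\overline{I}$ upgrades this to a congruence modulo $\overline{I}^{a+1}$, yielding $\overline{F}_{T,a}\equiv\sum_j\overline{F}_{S_j\cup\gamma_a,\,a}$, where $S_j\cup\gamma_a$ is $S_j$ with the column $[\alpha_a,\beta_a]$ appended. Each such tableau has $2$-straight $(a-1)$-st truncation (namely $S_j$) by construction, so the only defect left to cure is that $S_j\cup\gamma_a$ may fail to be semistandard at the junction between column $a-1$ and the new column.

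I would first observe that this defect is confined to the top row. Since every $\overline{F}_{S,a-1}$ is a weight vector, the straightening relations preserve the content multiset, so each $S_j$ has the same entries as $T^{(a-1)}$. The largest of these is $\beta_{a-1}$, which in a semistandard tableau must occupy the bottom-right cell; hence the bottom-right entry of $S_j$ equals $\beta_{a-1}\le\beta_a$, and the second-row inequality at the junction always holds. Thus the only possible failure is a top-row violation $\alpha'_{a-1}>\alpha_a$, where $\alpha'_{a-1}$ is the top-right entry of $S_j$. Working in bidegree $(a,a)$ is convenient here: since $(\overline{I}^{a+1})_{(a,a)}=0$, every congruence above is an honest equality in $\overline{R}$, so the identities of Proposition~\ref{prop:classical straightening} and the $2$-straightening corollaries may be applied freely.

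When $\alpha'_{a-1}>\alpha_a$, the four relevant indices satisfy $\alpha_a<\alpha'_{a-1}<\beta'_{a-1}\le\beta_a$, and I would straighten the last two columns using Proposition~\ref{prop:classical straightening}, which rewrites $[\alpha'_{a-1},\beta'_{a-1}]\,[\alpha_a,\beta_a]$ as a sum of two products corresponding to semistandard two-column configurations, each having $\alpha_a$ as the top entry of column $a-1$; the degenerate case $\beta'_{a-1}=\beta_a$ is handled instead by Proposition~\ref{prop:main 2-straightening rule}. One could also phrase this step through the dedicated rules of Corollaries~\ref{cor:2-straightening swap 3,3} and~\ref{cor:2-straightening pairs}.

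The main obstacle is that lowering the top of column $a-1$ to $\alpha_a$ can create a fresh top-row violation with column $a-2$, so the repair threatens to cascade leftward and destroy the $2$-straightness of the prefix arranged in the first step. To finish I would either argue that the rigid structure of a $2$-straight $S_j$ localizes the repair to the last few columns—so that the local relation of Corollary~\ref{cor:2-straightening pairs} closes the computation without any cascade—or, if a cascade is unavoidable, re-straighten the disturbed prefix by the hypothesis and iterate, controlling the loop by a well-founded monovariant on semistandard tableaux of shape $(a,a)$ (for instance the reverse-lexicographic leading monomial of $\overline{F}_{\,\cdot\,,a}$) that strictly decreases at each pass. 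Proving that this procedure terminates, and that every surviving tableau is semistandard with $2$-straight $(a-1)$-st truncation, is where I expect the real work to lie.
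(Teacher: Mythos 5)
Your setup coincides with the paper's proof: peel off the last column $\gamma_a=[\alpha_a,\beta_a]$, apply the hypothesis to $T^{(a-1)}$, multiply back by $\gamma_a\in\overline{I}$ to upgrade the congruence to one modulo $\overline{I}^{a+1}$, repair any semistandardness failure at the junction by classical straightening, re-$2$-straighten the prefix, and iterate. Your observation that the defect is confined to the top row is also sound: after projecting the $2$-straightening identity onto the weight of $T$ (legitimate, since every $\overline{F}_{S,a-1}$ is a weight vector and $\overline{I}^{a}$ is weight-stable), each $S_j$ has the content of $T^{(a-1)}$, so its bottom-right entry is its maximal index $\beta_{a-1}\leq\beta_a$; this maximal-index reasoning is exactly of the kind the paper uses in the subsequent lemmas.

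The genuine gap is the point you yourself flag: termination of the straighten/re-straighten loop is asserted, not proved. Your option (a) (the repair stays localized) is unsubstantiated and is not how the argument goes — the cascade into the prefix can genuinely occur, and the paper makes no attempt to prevent it. Your option (b) names the right structure, but your candidate monovariant, the reverse-lexicographic leading monomial of $\overline{F}_{\,\cdot\,,a}$, is unverified and doubtful: the $2$-straightening moves (e.g.\ Corollary~\ref{cor:2-straightening swap 3,3}) permute indices between the two rows, so the leading monomial attached to a tableau need not decrease across a pass, and cancellations in $\overline{R}$ modulo $\overline{I}^{a+1}$ further obstruct any term-order argument. The paper's monovariant is simpler and lives entirely in the last column: each application of classical straightening replaces $\gamma_a$ by $[\alpha'_a,\beta_a]$ with $\alpha'_a>\alpha_a$ (the bottom entry $\beta_a$ is fixed, being the maximal index of $T$), while the $2$-straightening of the first $a-1$ columns never touches the last column. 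Since all indices come from the fixed content of $T$, the top entry of the last column can strictly increase only finitely often — in the worst case $\alpha'_a,\beta_a$ become the two maximal indices of $T$, after which no junction violation is possible — so the process terminates with tableaux of the desired form. Supplying this (or some other verified) strictly monotone bounded quantity is precisely what is missing from your proof.
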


\begin{proof}
    Let $T=\gamma_1\cdots\gamma_a$. By assumption, $T^{(a-1)}=\gamma_1\cdots\gamma_{a-1}$ can be $2$-straightened and therefore can be expressed as a sum of $2$-straight tableaux in $\Tab_{(a-1,a-1)}$. Substituting this expression for $\gamma_1\cdots\gamma_{a-1}$ yields a representation of $T$ as a linear combination of tableaux $S_1,\ldots,S_r$ of shape $(a,a)$. 

    If such a tableau $S_i$ is not semistandard, then apply classical straightening to express it as a sum of semistandard tableaux $S_{i,1},\ldots,S_{i,r'}$. For each $S_{i,j}$, again $2$-straighten the first $(a-1)$ columns.

    We show that an iteration of this process must eventually yield a linear combination of semistandard tableaux of the desired form. Let $\gamma_a=[\alpha_a,\beta_a]$ be the last column of $T$. Any time classical straightening is applied, $\gamma_a$ is replaced with $[\alpha'_a,\beta_a]$, where $\alpha'_a>\alpha_a$. Since the indices of $T$ are fixed, this process must eventually terminate. In the worst-case scenario, $\alpha'_a,\beta_a$ are the two maximal indices appearing in $T$, and so after $2$-straightening the first $(a-1)$ columns, we obtain tableaux of the desired form.
\end{proof}

It therefore suffices to show that if $T\in \Tab_{(a,a)}$ with $T^{(a-1)}$ already $2$-straight, then $T$ can be $2$-straightened. 
For the rest of this subsection, suppose $T=\gamma_1\cdots\gamma_a$ with $\gamma_i=[\alpha_i,\beta_i]$. We further suppose that no index $i$ appears more than twice in $T$, and that $\gamma_i\neq\gamma_j$ for $i\neq j$; otherwise, $\overline{F}_{T,a}=0$ in $\overline{R}$. 
If $T$ is not already $2$-straight, then we have the following cases to consider:
\begin{enumerate}
    \item $\alpha_{a-1}=\alpha_a$;
    \item $\beta_{a-2}=\beta_{a-1}$;
        \item $\alpha_{a-2}=\alpha_{a-1}$;
    \item $\alpha_i=\alpha_{i+1}$ for some $i\leq a-2$, and $\beta_{j}=\alpha_{j+2}$ for $j=i,\ldots,a-3$;
        \item $\beta_{a-1}=\beta_a$.
\end{enumerate}

\begin{lemma}\label{lem:a=b=d alpha_d-1=alpha_d}
    If $\alpha_{a-1}=\alpha_a$, then $T$ can be $2$-straightened.
\end{lemma}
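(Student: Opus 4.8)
The plan is to reduce the shape-$(a,a)$ statement to the shape-$(a-1,a-1)$ statement already available from the inductive hypothesis, by splitting off the repeated index $\alpha\coloneqq\alpha_{a-1}=\alpha_a$ as an explicit factor $x_\alpha y_\alpha$. First I would record that, since no index occurs more than twice in $T$, the value $\alpha$ occupies exactly the top boxes of the last two columns; in particular $\alpha_{a-2}<\alpha$, and $\alpha$ appears in no other box, so $\beta_{a-1},\beta_a>\alpha$, and $\beta_{a-1}<\beta_a$ because $\gamma_{a-1}\neq\gamma_a$. Applying the base case $m=0$ of Proposition~\ref{prop:main 2-straightening rule} to the last two columns gives $[\alpha,\beta_{a-1}][\alpha,\beta_a]=x_\alpha y_\alpha[\beta_{a-1},\beta_a]$, hence
\[\overline{F}_{T,a}=\Big(\prod_{i=1}^{a-2}\gamma_i\Big)\,x_\alpha y_\alpha\,[\beta_{a-1},\beta_a]=x_\alpha y_\alpha\,\overline{F}_{\widehat{T},a-1},\]
where $\widehat{T}$ is the tableau of shape $(a-1,a-1)$ whose columns are $\gamma_1,\dots,\gamma_{a-2}$ together with $[\beta_{a-1},\beta_a]$. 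A direct check, using $\alpha_{a-2}<\beta_{a-1}$ and $\beta_{a-2}\le\beta_{a-1}<\beta_a$, shows $\widehat{T}$ is semistandard.

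Next I would invoke the inductive hypothesis. Since $\widehat{T}\in\Tab_{(a-1,a-1)}$, it can be $2$-straightened: $\overline{F}_{\widehat{T},a-1}\equiv\sum_k\overline{F}_{V_k,a-1}\pmod{\overline{I}^{a}}$ with each $V_k$ $2$-straight of shape $(a-1,a-1)$. Multiplying through by $x_\alpha y_\alpha$ and using that $x_\alpha y_\alpha\,\overline{I}^{a}\subseteq\overline{I}^{a+1}$ (because $x_\alpha y_\alpha[\mu,\nu]=[\alpha,\mu][\alpha,\nu]$ or $0$, so $x_\alpha y_\alpha$ times a product of $a$ minors is a product of $a+1$ minors), I obtain $\overline{F}_{T,a}\equiv\sum_k x_\alpha y_\alpha\,\overline{F}_{V_k,a-1}\pmod{\overline{I}^{a+1}}$. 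It then remains to recognize each summand as $\overline{F}_{W_k,a}$ for a $2$-straight $W_k$ of shape $(a,a)$: writing $V_k=\Phi_{a-1,a-1,a-1}(S_k)$, I would insert a repeated column $[\alpha,\alpha]$ into the $2$-semistandard tableau $S_k$ at its sorted position to form $S_k'$, and set $W_k\coloneqq\Phi_{a,a,a}(S_k')$; the construction of $\Phi$ collapses this repeated column precisely to the factor $x_\alpha y_\alpha$ via Proposition~\ref{prop:main 2-straightening rule}, so $\overline{F}_{W_k,a}=x_\alpha y_\alpha\,\overline{F}_{V_k,a-1}$. This yields $\overline{F}_{T,a}\equiv\sum_k\overline{F}_{W_k,a}$, that is, $T$ can be $2$-straightened.

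The main point to verify carefully is that $S_k'$ is a legitimate element of the domain of $\Phi_{a,a,a}$, so that $W_k$ is genuinely $2$-straight. Here I would use that $\alpha$ does not occur in $\widehat{T}$, and therefore not in any of its straightenings $V_k$ nor in $S_k$; consequently the inserted column $[\alpha,\alpha]$ creates no repeated entry adjacent in a row, so condition~(2) of $2$-semistandardness holds, while the diagonal condition~(3) is automatic for $p=2$ since a single repeated column already contributes $1+1\ge 2$. As $S_k$ has non-identical rows, so does $S_k'$, placing it in $\mathcal{D}$; and since the swap resolving $[\alpha,\alpha]$ involves only that column and its neighbour, it commutes with the swaps resolving the other repeated columns of $S_k$, so $\Phi_{a,a,a}(S_k')$ restricts to $V_k$ away from the inserted data. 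The only remaining bookkeeping is the congruence manipulation $x_\alpha y_\alpha\,\overline{I}^a\subseteq\overline{I}^{a+1}$, which is immediate from the minor identities above.
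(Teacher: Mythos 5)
Your opening reduction is correct and is genuinely different from the paper's route: the identity $\overline{F}_{T,a}=x_\alpha y_\alpha\,\overline{F}_{\widehat{T},a-1}$ (via the $m=0$ case of Proposition~\ref{prop:main 2-straightening rule}), and the containment $x_\alpha y_\alpha\,\overline{I}^{a}\subseteq\overline{I}^{a+1}$, both check out, whereas the paper never leaves shape $(a,a)$: it applies Corollary~\ref{cor:2-straightening swap 3,3} to the last three columns, re-straightens the first $a-1$ columns using Lemma~\ref{lem:a=b=d trunc 2-ss}, and verifies the conditions of Proposition~\ref{prop:a=b=d 2-straightened} directly. However, your final step has a genuine gap. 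Inserting the column $[\alpha,\alpha]$ into $S_k$ so that both rows remain strictly increasing requires the sorted position of $\alpha$ in the top row to coincide with its sorted position in the bottom row, and this can fail: take $V_1=S_1$ with columns $(1,4),(2,8),(3,9),(10,11)$ and $\alpha=7$, where the top row wants $7$ after the entry $3$ but the bottom row wants $7$ after the entry $4$. This configuration really occurs in your argument: for $T$ with columns $(1,4),(2,8),(3,9),(7,10),(7,11)$, which is semistandard with $T^{(4)}$ $2$-straight, one gets $\widehat{T}=V_1$ already $2$-straight, so no further straightening intervenes and your recipe must handle this $V_1$ as is.

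The natural repair---inserting $\alpha$ into each row independently at its sorted position---does always produce a $2$-semistandard $S_k'$, but then the claimed identity $\overline{F}_{W_k,a}=x_\alpha y_\alpha\,\overline{F}_{V_k,a-1}$ is false. In the example, $S_1'$ has columns $(1,4),(2,7),(3,8),(7,9),(10,11)$ and is already semistandard, so $W=\Phi_{5,5,5}(S_1')=S_1'$; since $[2,7][7,9]=x_7y_7[2,9]$ in $\overline{R}$, the Pl\"ucker relation gives
\[\overline{F}_{W,5}=x_7y_7[1,4][2,9][3,8][10,11]=x_7y_7\,\overline{F}_{V_1,4}+[1,4][2,7][3,7][8,9][10,11],\]
and the correction term is a nonzero element of $\overline{R}$. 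Crucially, you cannot discard it modulo the modulus: in the case $a=b=d$ the relevant bidegree is $(a,a)$ and $\overline{I}^{a+1}_{(a,a)}=0$, so ``can be $2$-straightened'' is an actual equality in $\overline{R}$, and the discrepancy survives. The true expansion of $x_7y_7\overline{F}_{V_1,4}$ requires two $2$-straight terms, so the heart of your proof---that multiplication by $x_\alpha y_\alpha$ carries a single $2$-straight monomial of shape $(a-1,a-1)$ to a single $2$-straight monomial of shape $(a,a)$---is not just unproved but wrong as stated; repairing it would require re-running a straightening argument in shape $(a,a)$ and controlling the resulting terms, which is essentially what the paper's proof does.
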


\begin{proof}
    If $\alpha_{a-2}=\alpha_{a-1}$ or $\beta_{a-1}=\beta_a$, then $T= 0$, so suppose $\alpha_{a-2}<\alpha_{a-1}$ and $\beta_{a-1}<\beta_a$. Furthermore, if $\beta_{a-2}<\alpha_a$, then $T$ is $2$-straight, and if $\beta_{a-2}=\alpha_a$, then $T=0$. Thus, we assume that $\beta_{a-2}>\alpha_a$. The following argument holds with $\beta_{a-2}\leq\beta_{a-1}$, so we have
    \[\alpha_{a-2}<\alpha_{a-1}=\alpha_a<\beta_{a-2}\leq\beta_{a-1}<\beta_a.\]

    By Corollary~\ref{cor:2-straightening swap 3,3} applied to $\gamma_{a-2}\gamma_{a-1}\gamma_a$, we have that $T$ is equal to
        \begin{displaymath}
        \ytableausetup
        {mathmode,boxsize=2em}
        \gamma_1\cdots\gamma_{a-3}\,\,\begin{ytableau}
        \alpha_{a-2} & \alpha_{a} & \beta_{a-1} \\
        \alpha_{a-1} & \beta_{a-2} & \beta_a
        \end{ytableau}
        \end{displaymath}
    This tableau may not be semistandard; if not, then apply classical straightening and then $2$-straightening of the first $(a-1)$ columns to write $T$ modulo $\overline{I}^{d+1}$ as a sum of semistandard tableaux of the form
    \begin{displaymath}
        \ytableausetup
        {mathmode,boxsize=2em}
        S = \gamma'_1\cdots\gamma'_{a-3}\,\,\begin{ytableau}
        \alpha'_{a-2} & \alpha'_{a-1} & \beta_{a-1} \\
        \beta'_{a-2} & \beta_{a-2} & \beta_a
        \end{ytableau}
        \end{displaymath}
    Note that $\beta_{a-2},\beta_{a-1},\beta_a$ must be in these positions of $S$ since they are the largest indices appearing in $T$. We show that $S$ is $2$-straight.

    Condition (1) holds by assumption. We have that $\beta'_{a-2},\alpha'_{a-1}<\beta_{a-1}$, so conditions (2) and (3) hold. We have that $\beta_{a-2}<\beta_a$, so condition (5) does not apply. Thus, it suffices to show that $\beta'_{a-2}<\beta_{a-2}$ to show that condition (4) holds. If $\beta_{a-2}=\beta_{a-1}$, then $\beta'_{a-2}<\beta_{a-2}$ since the same index cannot appear more than twice in $T$. If $\beta_{a-2}<\beta_{a-1}$, then the only way for $\beta'_{a-2}=\beta_{a-2}$ is if $\beta_{a-3}=\beta_{a-2}$ in $T$, as $\beta_{a-2}$ is greater than all $\alpha$ indices. However, since $T^{(a-1)}$ was assumed to be $2$-straight, it must have been that $\beta_{a-3}<\beta_{a-2}$. Thus, condition (4) holds for $S$, and so $S$ is $2$-straight. 
\end{proof}

\begin{lemma}\label{lem:a=b=d beta_a-2=beta_a-1}
    If $\beta_{a-2}=\beta_{a-1}$, then $T$ can be $2$-straightened.
\end{lemma}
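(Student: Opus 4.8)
Set $\beta \coloneqq \beta_{a-2} = \beta_{a-1}$. The plan is to peel off a doubled box $x_\beta y_\beta$, reduce to shape $(a-1,a-1)$, and invoke the inductive hypothesis. First I would clear the degenerate configurations, each of which forces $\overline{F}_{T,a}=0$: if $\alpha_{a-2}=\alpha_{a-1}$ then $\gamma_{a-2}=\gamma_{a-1}$, and if $\beta_a=\beta$ or $\alpha_a=\beta$ then $\beta$ occurs three times. So I may assume $\alpha_{a-2}<\alpha_{a-1}<\beta<\beta_a$ and $\alpha_a\neq\beta$. The key computation is the identity
\[[\alpha_{a-2},\beta][\alpha_{a-1},\beta]=x_\beta y_\beta\,[\alpha_{a-2},\alpha_{a-1}]\quad\text{in }\overline{R},\]
which is the $m=0$ case of Proposition~\ref{prop:main 2-straightening rule} (using $[i,j]=[j,i]$ in characteristic $2$). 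Since every factor in sight is even and hence central, multiplying back the remaining columns gives the exact equality $\overline{F}_{T,a}=x_\beta y_\beta\,\overline{F}_{T',a-1}$, where $T'$ is the tableau of shape $(a-1,a-1)$ with columns $\gamma_1,\dots,\gamma_{a-3},[\alpha_{a-2},\alpha_{a-1}],[\alpha_a,\beta_a]$.

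Next I would verify that $T'$ is semistandard. Its columns are strictly increasing, and its top row $\alpha_1\le\cdots\le\alpha_{a-3}\le\alpha_{a-2}\le\alpha_a$ is weakly increasing. The only subtlety is the bottom row $\beta_1<\cdots<\beta_{a-3},\alpha_{a-1},\beta_a$, where one needs $\beta_{a-3}\le\alpha_{a-1}$; but this is exactly what condition (5) of Proposition~\ref{prop:a=b=d 2-straightened}, applied to the $2$-straight tableau $T^{(a-1)}$ (which has $\beta_{a-2}=\beta_{a-1}$), guarantees, since its staircase $\beta_{k-2}=\alpha_k$ either reaches $k=a-1$ (giving equality) or halts earlier with strict inequality. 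With $T'$ semistandard, the inductive hypothesis—that every semistandard tableau of shape $(a-1,a-1)$ can be $2$-straightened—writes $\overline{F}_{T',a-1}=\sum_\ell c_\ell\,\overline{F}_{U_\ell,a-1}$ with each $U_\ell$ a $2$-straight tableau of shape $(a-1,a-1)$; this is an honest equality because $\overline{I}^{a}_{(a-1,a-1)}=0$. Thus $\overline{F}_{T,a}=\sum_\ell c_\ell\,x_\beta y_\beta\,\overline{F}_{U_\ell,a-1}$.

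It then suffices to show that for any $2$-straight $U$ of shape $(a-1,a-1)$ whose index set omits $\beta$, the product $x_\beta y_\beta\,\overline{F}_{U,a-1}$ is a single $2$-standard monomial of shape $(a,a)$. Writing $U=\Phi_{a-1,a-1,a-1}(S)$, the element $\overline{F}_{U,a-1}$ is the product of the factors $x_cy_c$ and $[c,d]$ over the doubled and non-doubled columns of the $2$-semistandard tableau $S$, and I would build the target $2$-SSYT $W$ by inserting a doubled $\beta$-column into $S$. If no column of $S$ straddles $\beta$ (top $<\beta<$ bottom), this insertion is legal and $G_{W,a}=x_\beta y_\beta\,\overline{F}_{U,a-1}$; a single straddling column $[\alpha^*,\beta^*]$ is instead absorbed by the reverse identity $x_\beta y_\beta[\alpha^*,\beta^*]=[\alpha^*,\beta][\beta,\beta^*]$, which splits it into the admissible columns $(\alpha^*,\beta)$ and $(\beta,\beta^*)$. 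The crux—and the step I expect to be the main obstacle—is proving that $S$ has at most one column straddling $\beta$. The indices of $U$ larger than $\beta$ coincide with those of $T$, namely $\beta_a$ always and $\alpha_a$ when $\alpha_a>\beta$; in the first situation there is exactly one straddle, while in the second one must show that the two maximal indices $\alpha_a,\beta_a$ stay paired as a single column $(\alpha_a,\beta_a)$ of $S$ (so that no straddle occurs). This pairing is forced because the straightening of $T'$ only resolves repeats among the indices below $\beta$ and never acts on the maximal column, which is precisely where the hypothesis that $T^{(a-1)}$ is $2$-straight does the work.
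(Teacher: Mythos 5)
Your reduction is attractive and its first half is sound: the exact factorization $\overline{F}_{T,a}=x_\beta y_\beta\,\overline{F}_{T',a-1}$ via the $m=0$ case of Proposition~\ref{prop:main 2-straightening rule}, the observation that $\overline{I}^{a}_{(a-1,a-1)}=0$ (so the inductive congruence is an honest equality), and the verification that $T'$ is semistandard using condition (5) for the $2$-straight $T^{(a-1)}$ are all correct, and weight considerations do let you assume each $U_\ell$ has the same content as $T'$. The genuine gap is exactly the step you flagged. The inductive hypothesis is purely existential --- ``$T'$ can be written as \emph{some} sum of $2$-straight tableaux'' --- and gives you no positional control whatsoever over where $\alpha_a$ and $\beta_a$ sit inside the $U_\ell$, or inside their preimages $S_\ell$ under $\Phi_{a-1,a-1,a-1}$. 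Your claimed forcing (``the straightening of $T'$ never acts on the maximal column'') is not a consequence of anything you have assumed, and it cannot be rescued by content alone: with $\beta=3$, both $U=[1,2][4,5]$ and $U'=[1,4][2,5]$ are $2$-straight tableaux of shape $(2,2)$ with identical content $\{1,2,4,5\}$, and $U'$ is its own preimage, with \emph{both} of its columns straddling $\beta$. So a hypothetical $2$-straightening of $T'=[1,2][4,5]$ (arising from $T$ with columns $[1,3][2,3][4,5]$) that used $U'$ would defeat your insertion step, and nothing in your induction rules this out. With two straddling columns the doubled-$\beta$ insertion fails, and attempting to split via Pl\"ucker relations ($[1,4][2,5]=[1,2][4,5]+[1,5][2,4]$) reopens the straightening problem rather than closing it. To repair the argument you would need to strengthen the inductive hypothesis to an invariant of the form ``when the two largest indices each appear once, exceed all other indices, and form the last column, the $2$-straightening can be chosen so that they remain paired in a single column of each preimage $S_\ell$'' --- plausible for the paper's algorithm, but it requires its own inductive proof that you have not supplied.

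For comparison, the paper never descends to shape $(a-1,a-1)$ and so never confronts this control problem. It splits into the cases $\beta_{a-2}>\alpha_a$ and $\beta_{a-2}<\alpha_a$, applies Corollary~\ref{cor:2-straightening swap 3,3} directly to the last three columns $\gamma_{a-2}\gamma_{a-1}\gamma_a$ of $T$ (staying in shape $(a,a)$), and then re-straightens only the first $a-1$ columns. The key point your approach loses is that the three largest indices ($\beta_{a-2}=\beta_{a-1}$ and $\beta_a$, respectively $\beta_{a-2}=\beta_{a-1}$, $\alpha_a$, $\beta_a$) are pinned in the last boxes throughout this process precisely because they exceed every index that the re-straightening can touch, which is what lets the paper verify conditions (1)--(5) of Proposition~\ref{prop:a=b=d 2-straightened} for the resulting tableaux directly.
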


\begin{proof}
    If $\alpha_{a-2}=\alpha_{a-1}$ or $\beta_{a-1}=\beta_a$, then $T= 0$, so assume $\alpha_{a-2}<\alpha_{a-1}$ and $\beta_{a-1}<\beta_a$. If $\alpha_{a-1}=\alpha_a$, then we considered this case in Lemma~\ref{lem:a=b=d alpha_d-1=alpha_d}, so also assume $\alpha_{a-1}<\alpha_a$. Finally, if $\beta_{a-1}=\alpha_a$, then $T= 0$. We therefore either have that $\beta_{a-2}>\alpha_a$ or $\beta_{a-2}<\alpha_a$.

    If $\beta_{a-2}>\alpha_a$, we have that
    \[\alpha_{a-2}<\alpha_{a-1}<\alpha_a<\beta_{a-2}=\beta_{a-1}<\beta_a.\]
    By Corollary~\ref{cor:2-straightening swap 3,3} applied to $\gamma_{a-2}\gamma_{a-1}\gamma_a$, we have that $T$ is equivalent to
    \begin{displaymath}
        \ytableausetup
        {mathmode,boxsize=2em}
        \gamma_1\cdots\gamma_{a-3}\,\,\begin{ytableau}
        \alpha_{a-2} & \alpha_{a} & \beta_{a-1} \\
        \alpha_{a-1} & \beta_{a-2} & \beta_a
        \end{ytableau}
        \end{displaymath}
    This tableau may not be semistandard; if not, then apply classical straightening and then $2$-straightening of the first $(a-1)$ columns to write $T$ modulo $\overline{I}^{d+1}$ as a sum of semistandard tableaux of the form
    \begin{displaymath}
        \ytableausetup
        {mathmode,boxsize=2em}
        S = \gamma'_1\cdots\gamma'_{a-3}\,\,\begin{ytableau}
        \alpha'_{a-2} & \alpha'_{a-1} & \beta_{a-1} \\
        \beta'_{a-2} & \beta_{a-2} & \beta_a
        \end{ytableau}
        \end{displaymath}
    Note that $\beta_{a-2},\beta_{a-1},\beta_a$ must be in these positions of $S$ since they are the largest indices appearing in $T$. We have that $S$ is $2$-straight; the argument follows as in Lemma~\ref{lem:a=b=d alpha_d-1=alpha_d}.

    If $\beta_{a-2}<\alpha_a$, we have that
    \[\alpha_{a-2}<\alpha_{a-1}<\beta_{a-2}=\beta_{a-1}<\alpha_a<\beta_a.\]
    Note in this case that $\beta_{a-2}=\beta_{a-1},\alpha_a,\beta_a$ are strictly greater than any other indices appearing in $T$.
    By Corollary~\ref{cor:2-straightening swap 3,3} applied to $\gamma_{a-2}\gamma_{a-1}\gamma_a$, we have that $T$ is equivalent to
    \begin{displaymath}
        \ytableausetup
        {mathmode,boxsize=2em}
        \gamma_1\cdots\gamma_{a-3}\,\,\begin{ytableau}
        \alpha_{a-2} & \beta_{a-2} & \beta_{a-1} \\
        \alpha_{a-1} & \alpha_a & \beta_a
        \end{ytableau}
        \end{displaymath}
    This tableau may not be semistandard; if not, then apply classical straightening and then $2$-straightening of the first $(a-1)$ columns to write $T$ as a sum of semistandard tableaux of the form
    \begin{displaymath}
        \ytableausetup
        {mathmode,boxsize=2em}
        S = \gamma'_1\cdots\gamma'_{a-3}\,\,\begin{ytableau}
        \alpha'_{a-2} & \beta_{a-2} & \beta_{a-1} \\
        \beta'_{a-2} & \alpha_a & \beta_a
        \end{ytableau}
        \end{displaymath}
    We show that $S$ is $2$-straight. Condition (1) holds by assumption. We have that $\beta'_{a-2}<\alpha_a<\beta_a$ so conditions (4) and (5) hold. Finally, we have that $\beta'_{a-2}<\beta_{a-1}$, so conditions (2) and (3) hold.
\end{proof}

\begin{lemma}\label{lem:a=b=d alpha_d-2=alpha_d-1}
    If $\alpha_i=\alpha_{i+1}$ for some $1\leq i\leq a-2$ and $\beta_j=\alpha_{j+2}$ for $j=i,\ldots,a-3$, then $T$ can be $2$-straightened. In particular, if $\alpha_{i-2}=\alpha_{i-1}$, then $T$ can be $2$-straightened.
\end{lemma}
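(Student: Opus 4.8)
The plan is to exploit the chain hypothesis to collapse columns $i,\dots,a-1$ into a monomial times a single minor, reducing the whole configuration to a local Plücker relation on the last three columns. First I would dispose of the degenerate configurations exactly as in the two preceding lemmas: if some index occurs more than twice or if $\gamma_k=\gamma_\ell$ for some $k\neq\ell$, then $\overline{F}_{T,a}=0$; if $\alpha_{a-1}=\alpha_a$ I invoke Lemma~\ref{lem:a=b=d alpha_d-1=alpha_d}, and if $\beta_{a-2}=\beta_{a-1}$ I invoke Lemma~\ref{lem:a=b=d beta_a-2=beta_a-1}. Since $T^{(a-1)}$ is $2$-straight, its conditions furnish $\beta_{a-3}<\alpha_{a-2}$ and $\beta_1<\cdots<\beta_{a-1}$. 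Comparing against Proposition~\ref{prop:a=b=d 2-straightened} shows that when $\beta_{a-2}\le\alpha_a$ the tableau $T$ is either already $2$-straight or falls under the separate case $\beta_{a-1}=\beta_a$, so the only configuration that genuinely requires work is the ordering
\[
\alpha_{a-1}<\alpha_a<\beta_{a-2}<\beta_{a-1}<\beta_a .
\]

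In this remaining case the chain condition $\beta_j=\alpha_{j+2}$ for $i\le j\le a-3$ together with $\alpha_i=\alpha_{i+1}$ means that columns $i,\dots,a-1$ form precisely a staircase of the shape appearing in Proposition~\ref{prop:main 2-straightening rule}, with the free bottom entries $\delta=\beta_{a-2}$ and $\varepsilon=\beta_{a-1}$. Applying that proposition collapses them:
\[
\gamma_i\cdots\gamma_{a-1}=M\,[\beta_{a-2},\beta_{a-1}],\qquad M=x_\alpha y_\alpha\prod_{k=i+2}^{a-1}x_{\alpha_k}y_{\alpha_k},
\]
where $\alpha=\alpha_i=\alpha_{i+1}$. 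Thus $\overline{F}_{T,a}$ is the product of the unaffected prefix $\gamma_1\cdots\gamma_{i-1}$, the monomial $M$, and $[\beta_{a-2},\beta_{a-1}][\alpha_a,\beta_a]$. I would then apply the Plücker relation of Proposition~\ref{prop:classical straightening} to the four distinct indices $\alpha_a<\beta_{a-2}<\beta_{a-1}<\beta_a$ to obtain
\[
[\beta_{a-2},\beta_{a-1}][\alpha_a,\beta_a]=[\alpha_a,\beta_{a-2}][\beta_{a-1},\beta_a]+[\alpha_a,\beta_{a-1}][\beta_{a-2},\beta_a].
\]

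Finally I would re-expand each of the two resulting terms. Since $M\,[\alpha_a,\beta_{a-2}]$ and $M\,[\alpha_a,\beta_{a-1}]$ are again of the staircase form of Proposition~\ref{prop:main 2-straightening rule}, reading that identity backwards realizes the products $(\gamma_1\cdots\gamma_{i-1})M[\alpha_a,\beta_{a-2}][\beta_{a-1},\beta_a]$ and $(\gamma_1\cdots\gamma_{i-1})M[\alpha_a,\beta_{a-1}][\beta_{a-2},\beta_a]$ as $\overline{F}_{T_1,a}$ and $\overline{F}_{T_2,a}$ for explicit semistandard tableaux $T_1,T_2$ whose last columns are $[\beta_{a-1},\beta_a]$ and $[\beta_{a-2},\beta_a]$. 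This yields $\overline{F}_{T,a}=\overline{F}_{T_1,a}+\overline{F}_{T_2,a}$, and it remains to check that $T_1$ and $T_2$ satisfy the five conditions of Proposition~\ref{prop:a=b=d 2-straightened}. The special case $i=a-2$ of the statement, i.e. $\alpha_{a-2}=\alpha_{a-1}$, is simply the instance of this argument with empty chain and $M=x_\alpha y_\alpha$.

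The main obstacle is the opening case reduction: I must verify, using Proposition~\ref{prop:a=b=d 2-straightened} and the $2$-straightness of $T^{(a-1)}$, that every configuration other than the single ordering above is already accounted for, so that the Plücker step is the only substantive computation. The ensuing verification that $T_1,T_2$ are $2$-straight is a careful but routine check; the delicate point is condition (3) evaluated at columns $a-2,a-1,a$, which holds precisely because the reduced ordering gives $\alpha_{a-1}<\alpha_a$ and $\alpha_a<\beta_{a-2}$.
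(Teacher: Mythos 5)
Your handling of the main configuration $\alpha_{a-1}<\alpha_a<\beta_{a-2}<\beta_{a-1}<\beta_a$ coincides with the paper's: collapsing the staircase by Proposition~\ref{prop:main 2-straightening rule}, applying the Pl\"ucker relation of Proposition~\ref{prop:classical straightening}, and re-expanding is precisely the proof of Corollary~\ref{cor:2-straightening pairs}, which the paper invokes to produce the same two tableaux (with last three columns $(\alpha_{a-2},\alpha_a)$, $(\alpha_{a-1},\beta_{a-1})$, $(\beta_{a-2},\beta_a)$ and $(\alpha_{a-2},\alpha_a)$, $(\alpha_{a-1},\beta_{a-2})$, $(\beta_{a-1},\beta_a)$), and your identification of the condition-(3) check via $\alpha_a<\beta_{a-2}$ as the key verification matches theirs. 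Dispatching $\alpha_{a-1}=\alpha_a$ and $\beta_{a-2}=\beta_{a-1}$ through the two preceding lemmas is also legitimate (the paper instead notes directly that the chain hypothesis forces $\overline{F}_{T,a}=0$ in these situations).

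However, there is a genuine gap in your opening reduction: you may not defer the configuration $\beta_{a-1}=\beta_a$ to ``the separate case.'' That configuration lies within the statement you are proving, and in the paper's architecture the deferral is circular: Lemma~\ref{lem:a=b=d beta_a-1=beta_a} explicitly assumes $\alpha_{j-2}<\alpha_{j-1}$ for $j=\max\{j\leq a:\beta_{j-2}\neq\alpha_j\}$ and sends the complementary configuration $\alpha_{j-2}=\alpha_{j-1}$ back to the present lemma. Concretely, a tableau with $\alpha_{a-2}=\alpha_{a-1}$, $\beta_{a-2}>\alpha_a$, and $\beta_{a-1}=\beta_a$ (for instance, columns $(1,4),(1,5),(2,5)$ with $a=3$) is covered by neither of your cases, and it is neither zero nor $2$-straight, so it genuinely requires work. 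Note also that your Pl\"ucker step cannot be extended to it: Proposition~\ref{prop:classical straightening} requires four distinct indices, and here $[\beta_{a-1},\beta_a]=0$. The paper closes this case inside the lemma with a short sub-analysis: if $\beta_{a-2}<\alpha_a$ then $T$ is $2$-straight; if $\beta_{a-2}=\alpha_a$ then $T=0$ when $i=1$ and $T$ is already $2$-straight when $i>1$; and if $\beta_{a-2}>\alpha_a$ then the degenerate one-term version of Corollary~\ref{cor:2-straightening pairs} (only one summand survives since the third product of minors vanishes) rewrites $T$ as the single tableau with last columns $(\alpha_{a-2},\alpha_a)$, $(\alpha_{a-1},\beta_{a-1})$, $(\beta_{a-2},\beta_a)$, for which condition (5) of Proposition~\ref{prop:a=b=d 2-straightened} holds precisely because $\alpha_a<\beta_{a-2}$. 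Your proof needs this branch to be complete.
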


\begin{proof}
    If $\alpha_{a-1}=\alpha_a$ or $\beta_{a-2}=\beta_{a-1}$, then $T=0$, so suppose $\alpha_{a-1}<\alpha_a$ and $\beta_{a-2}<\beta_{a-1}$.

    First suppose $\beta_{a-1}<\beta_a$. If $\beta_{a-2}\leq\alpha_a$, then $T$ is $2$-straight, so suppose $\beta_{a-2}>\alpha_a$. Then, we have
    \[\alpha_i=\alpha_{i+1}<\beta_i=\alpha_{i+2}<\cdots < \beta_{a-3}=\alpha_{a-1}<\alpha_a<\beta_{a-2}<\beta_{a-1}<\beta_a.\]
    By Corollary~\ref{cor:2-straightening pairs}, $\gamma_i\cdots\gamma_a$ is equivalent to
    \begin{displaymath}
        \ytableausetup
        {mathmode,boxsize=2em}
        \gamma_i\cdots\gamma_{a-3}\left(\begin{ytableau}
        \alpha_{a-2} & \alpha_{a-1} & \beta_{a-2} \\
        \alpha_a & \beta_{a-1} & \beta_a
        \end{ytableau}
        +
        \begin{ytableau}
        \alpha_{a-2} & \alpha_{a-1} & \beta_{a-1} \\
        \alpha_a & \beta_{a-2} & \beta_a
        \end{ytableau}\right)
        \end{displaymath}
    We show that substituting this expression for $\gamma_i\cdots\gamma_a$ in $T$ results in $2$-straight tableaux. We consider the first tableau
    \begin{displaymath}
        \ytableausetup
        {mathmode,boxsize=2em}
        \gamma_1\cdots\gamma_{a-3}\,\,\begin{ytableau}
        \alpha_{a-2} & \alpha_{a-1} & \beta_{a-2} \\
        \alpha_a & \beta_{a-1} & \beta_a
        \end{ytableau}
        \end{displaymath}
    Note that this is semistandard: if $i<a-2$, then $\beta_{a-3}=\alpha_{a-1}<\alpha_a$ by our assumption; if $i=a-2$, then $\beta_{a-3}<\alpha_{a-1}$ by the inductive hypothesis that the first $(a-1)$ columns form a $2$-straight tableau. Next, we still have $\alpha_i=\alpha_{i+1}$ and $\beta_{i-1}<\alpha_i$, as these indices were not moved. In addition, we still have that $\beta_j=\alpha_{j+2}$ for $j=i,\ldots,a-3$, and now we have $\alpha_a<\beta_{a-2}$, so condition (3) holds. Finally, we have that the second row is strictly increasing throughout, so conditions (4) and (5) hold.

    A similar argument shows that the second tableau in the expression is also $2$-straight.

    Now suppose $\beta_{a-1}=\beta_a$. If $\beta_{a-2}<\alpha_a$, then $T$ is $2$-straight, so first suppose $\beta_{a-2}=\alpha_a$. If $i=1$, then $T= 0$. If $i>1$, then we show that $T$ is already $2$-straight. We have that $\beta_{j-2}=\alpha_j$ for $j=a,a-1,\ldots,i+3$. In addition, since $T^{(a-1)}$ is $2$-straight, we have that $\beta_{i-1}<\alpha_{i+1}$. Thus, condition (5) holds, while the other conditions hold by induction.

    Now suppose $\beta_{a-1}=\beta_a$ and $\beta_{a-2}>\alpha_a$. Then $\gamma_i\cdots\gamma_a$ is equivalent to
    \begin{displaymath}
        \ytableausetup
        {mathmode,boxsize=2em}
        \gamma_i\cdots\gamma_{a-3}\,\,\begin{ytableau}
        \alpha_{a-2} & \alpha_{a-1} & \beta_{a-2} \\
        \alpha_a & \beta_{a-1} & \beta_a
        \end{ytableau}
        \end{displaymath}
    Substituting this expression for $\gamma_i\cdots\gamma_a$ in $T$ results in a $2$-straight tableau: the argument for showing conditions (1)--(4) hold follow as in the $\beta_{a-1}<\beta_a$ case, and for condition (5), we have that $\alpha_a<\beta_{a-2}$.
\end{proof}

\begin{lemma}\label{lem:a=b=d beta_a-1=beta_a}
    If $\beta_{a-1}=\beta_a$, then $T$ can be $2$-straightened.
\end{lemma}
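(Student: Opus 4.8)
The plan is to treat this as the dual of Lemma~\ref{lem:a=b=d alpha_d-1=alpha_d}, with the repeated index now in the bottom row, and to split on how $\beta_{a-2}$ compares to $\alpha_a$. Writing $T=\gamma_1\cdots\gamma_a$ with $\gamma_i=[\alpha_i,\beta_i]$, I first discard the configurations forcing $\overline{F}_{T,a}=0$: the case $\alpha_{a-1}=\alpha_a$ is Lemma~\ref{lem:a=b=d alpha_d-1=alpha_d}, while $\beta_{a-2}=\beta_{a-1}$ makes the index $\beta_a$ occur three times and $\alpha_a=\beta_{a-1}$ degenerates $\gamma_a$. Thus I may assume $\alpha_{a-1}<\alpha_a<\beta_{a-1}=\beta_a$ and $\beta_{a-2}<\beta_{a-1}$. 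Since $T^{(a-1)}$ is $2$-straight and any $\alpha$-repeat would place $T$ in Lemma~\ref{lem:a=b=d alpha_d-1=alpha_d} or Lemma~\ref{lem:a=b=d alpha_d-2=alpha_d-1}, the only condition of Proposition~\ref{prop:a=b=d 2-straightened} at issue is (5), whose staircase begins at $j=a$. If $\beta_{a-2}<\alpha_a$, condition (5) already holds at $j=a$ and $T$ is $2$-straight, so there is nothing to do.

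For the main case $\beta_{a-2}>\alpha_a$ I have $\alpha_{a-2}<\alpha_{a-1}<\alpha_a<\beta_{a-2}<\beta_{a-1}=\beta_a$. Now $\gamma_{a-2}\gamma_{a-1}\gamma_a$ matches the right-hand tableau of Corollary~\ref{cor:2-straightening swap 3,3} with repeated index $\beta_{a-1}$, so I may replace these three columns by the minors $[\alpha_{a-2},\beta_{a-1}]$, $[\beta_{a-2},\beta_{a-1}]$, and $[\alpha_{a-1},\alpha_a]$. Substituting into $T$ and then applying classical straightening (Proposition~\ref{prop:classical straightening}) followed by the inductive $2$-straightening of the first $(a-1)$ columns, I rewrite $\overline{F}_{T,a}$ modulo $\overline{I}^{d+1}$ as a sum of semistandard tableaux $S$. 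As in Lemma~\ref{lem:a=b=d alpha_d-1=alpha_d}, the three largest indices $\beta_{a-2}<\beta_{a-1}=\beta_a$ are forced into the corner positions $\alpha'_a,\beta'_{a-1},\beta'_a$ of each $S$, so $\alpha'_a=\beta_{a-2}$; every remaining bottom entry is then strictly below $\beta_{a-2}=\alpha'_a$, giving condition (5) at $j=a$, while conditions (1)--(4) are inherited from the $2$-straightness of the first $(a-1)$ columns. Hence each $S$ is $2$-straight.

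The remaining case $\beta_{a-2}=\alpha_a$ is the main obstacle: the equality extends the staircase $\beta_{k-2}=\alpha_k$, and the swap of Corollary~\ref{cor:2-straightening swap 3,3} on the last three columns is now a tautology. Instead I would collapse the staircase using Proposition~\ref{prop:main 2-straightening rule}: with $c=\alpha_a=\beta_{a-2}$ and $e=\beta_{a-1}=\beta_a$, the identities $[\alpha_{a-1},e][c,e]=x_ey_e[\alpha_{a-1},c]$ and $[\alpha_{a-2},c][\alpha_{a-1},c]=x_cy_c[\alpha_{a-2},\alpha_{a-1}]$ pull out the square factors $x_cy_c\,x_ey_e$ (and further such factors when the staircase is longer) and reduce $\overline{F}_{T,a}$ to such a factor times a short product of minors. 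I then expect to recognize this as one term of the three-term relation of Corollary~\ref{cor:2-straightening pairs}, taken with $\alpha=e$ and the $\beta_i$ absorbing the collapsed indices; since all three terms carry the common square factor, this expresses $\overline{F}_{T,a}$ in $\overline{R}$ as the sum of the other two. The delicate points, where I expect the real work to lie, are verifying that these two terms are themselves $2$-straight (after re-expanding the square factors into minors) and handling staircases longer than a single step, where the collapse yields a Plücker relation on several indices that must be expanded and checked against Proposition~\ref{prop:a=b=d 2-straightened} while tracking the forced positions of the maximal indices.
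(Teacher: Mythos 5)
Your preliminary reductions and your case $\beta_{a-2}>\alpha_a$ are correct, and in fact that case coincides with the paper's argument: your route through Corollary~\ref{cor:2-straightening swap 3,3} followed by one Pl\"ucker exchange on $\{\alpha_{a-2},\alpha_{a-1},\alpha_a\}$ against the factor $[\alpha_{a-2},\beta_{a-1}]$ yields exactly the two tableaux with final columns $(\alpha_{a-2},\alpha_{a-1})(\alpha_a,\beta_{a-1})(\beta_{a-2},\beta_a)$ and $(\alpha_{a-2},\alpha_a)(\alpha_{a-1},\beta_{a-1})(\beta_{a-2},\beta_a)$ that the paper obtains from Corollary~\ref{cor:2-straightening pairs} in the instance $j=a$, and your verification of condition (5) for the resulting tableaux is essentially the paper's.

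The genuine gap is the case $\beta_{a-2}=\alpha_a$, which you correctly identify as the main obstacle but then only sketch, explicitly deferring ``the real work.'' This case is where the whole content of the lemma lies, and your worry about ``a Pl\"ucker relation on several indices'' for long staircases signals the missing idea: the relation should be applied at the \emph{top} of the staircase, not after collapsing all of it. The paper sets $j=\max\{j\leq a:\beta_{j-2}\neq\alpha_j\}$, disposes of the boundary situations ($j$ nonexistent: $T=0$ if $\alpha_1=\alpha_2$, else $T$ is $2$-straight; $\alpha_{j-2}=\alpha_{j-1}$: $T$ is $2$-straight; $\beta_{j-2}<\alpha_j$: $T$ is $2$-straight), and then, assuming $\beta_{j-2}>\alpha_j$, applies Corollary~\ref{cor:2-straightening pairs} to $\gamma_{j-2}\gamma_{j-1}\gamma_j$ while carrying $\gamma_{j+1}\cdots\gamma_a$ along \emph{unchanged}. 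Because $\beta_{j-2}<\beta_{j-1}=\alpha_{j+1}<\cdots<\beta_{a-2}=\alpha_a<\beta_{a-1}=\beta_a$ are the largest indices in $T$, the collapsed staircase contributes only a common square factor (via Proposition~\ref{prop:main 2-straightening rule}), so the exchange involves just the four indices $\alpha_{j-2},\alpha_{j-1},\alpha_j,\beta_{j-2}$: a single three-term relation suffices for a staircase of any length, and no multi-index expansion ever arises. One then straightens and $2$-straightens only the first $j-1$ columns and checks $2$-straightness of the outcome directly: $\beta_{j-1}$ is pinned at the bottom of column $j-1$ (it exceeds every index in the first $j-1$ columns), $\alpha'_{j-1}<\beta_{j-2}$ gives semistandardness, and $\alpha'_{j-1},\beta'_{j-2}<\beta_{j-2},\beta_{j-1}$ gives the conditions of Proposition~\ref{prop:a=b=d 2-straightened}; your appeal to conditions (1)--(4) being ``inherited'' would have to be replaced by this verification. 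So the plan you ``expect'' to work in the hard case is indeed the right one, but the choice of $j$, the explicit replacement tableaux, and the $2$-straightness check constitute the actual proof and are absent from your write-up.
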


\begin{proof}
    Let $j=\max\{j\leq a:\beta_{j-2}\neq\alpha_j\}$. If such a $j$ does not exist, then we either have that $\alpha_1=\alpha_2$, in which case $T=0$, or $\alpha_1<\alpha_2$, in which case $T$ is $2$-straight. Thus, assume that $j$ does exist. We further assume that $\alpha_{j-2}<\alpha_{j-1}$; if $\alpha_{j-2}=\alpha_{j-1}$, then $T$ is $2$-straight by induction on $T^{(a-1)}$.

    If $\beta_{j-2}<\alpha_j$, then $T$ is $2$-straight, so assume $\beta_{j-2}>\alpha_j$. In this case, we have that
    \[\beta_{j-2}<\beta_{j-1}=\alpha_{j+1}<\cdots<\beta_{a-2}=\alpha_a<\beta_{a-1}<\beta_a\]
    are strictly greater than any other indices appearing in $T$.
    By Corollary~\ref{cor:2-straightening pairs}, $\gamma_{j-2}\cdots\gamma_a$ can be expressed as
    \begin{displaymath}
        \ytableausetup
        {mathmode,boxsize=2em}
        \left(\begin{ytableau}
        \alpha_{j-2} & \alpha_j & \beta_{j-2} \\
        \alpha_{j-1} & \beta_{j-1} & \beta_j
        \end{ytableau}
        +
        \begin{ytableau}
        \alpha_{j-2} & \alpha_{j-1} & \beta_{j-2} \\
        \alpha_j & \beta_{j-1} & \beta_j
        \end{ytableau}\right)\gamma_{j+1}\cdots\gamma_a.
        \end{displaymath}
    Substitute this expression for $\gamma_{j-2}\cdots\gamma_a$ in $T$. The resulting tableaux may not be semistandard; if not, then apply classical straightening and then $2$-straightening, both to the first $(j-1)$ columns. We show that the resulting tableaux are $2$-semistandard.

    A tableau appearing in the expression is of the form
    \begin{displaymath}
        \ytableausetup
        {mathmode,boxsize=2em}
        S = \gamma'_1\cdots\gamma'_{j-3}\,\,\begin{ytableau}
        \alpha'_{j-2} & \alpha'_{j-1} & \beta_{j-2} \\
        \beta'_{j-2} & \beta_{j-1} & \beta_j
        \end{ytableau}\,\,\gamma_{j+1}\cdots\gamma_a.
        \end{displaymath}
    Note that $\beta_{j-1}$ must appear in that position, since it is strictly greater than all other indices appearing in the first $(j-1)$ columns. Furthermore, we have that $\alpha'_{j-1}<\beta_{j-2}$ and $\beta_{j-1}\leq\beta_j$, with equality occurring if and only if $j=a$. This shows that $S$ is semistandard. We also have that $S$ is $2$-straight, as we must have that $\alpha'_{j-1},\beta'_{j-2}<\beta_{j-2},\beta_{j-1}$.
\end{proof}

The results of this subsection prove Theorem~\ref{thm:main span} in the case that $a=b=d$.

\begin{example}\label{ex:straightening}
In this example, we show the 2-straightening procedure for $T$ given in Example~\ref{ex:non-straight a=b=d} by

\begin{displaymath}
\ytableausetup
{mathmode,boxsize=1.5em}
\begin{ytableau}
1 & 2 & 2 & 4 & 5 \\
3 & 3 & 6 & 7 & 7  
\end{ytableau}
\end{displaymath}
Since $T^{(2)}$ is 2-straight, we first use Lemma~\ref{lem:a=b=d alpha_d-1=alpha_d} to 2-straighten $T^{(3)}$ and obtain $T_1$ given by

\begin{displaymath}
\ytableausetup
{mathmode,boxsize=1.5em}
\begin{ytableau}
1 & 2 & 3 & 4 & 5 \\
2 & 3 & 6 & 7 & 7  
\end{ytableau}
\end{displaymath}
Notice that $T_1$ is still semistandard and satisfies all conditions of Proposition~\ref{prop:a=b=d 2-straightened}, except for (5). Since $T_1^{(4)}$ is 2-straight, we may apply Lemma~\ref{lem:a=b=d beta_a-1=beta_a} to 2-straighten $T_1$; let $T_2,T_3$ be given by

\begin{displaymath}
\ytableausetup
{mathmode,boxsize=1.5em}
T_2 =\begin{ytableau}
1 & 2 & 3 & 5 & 6 \\
2 & 3 & 4 & 7 & 7  
\end{ytableau}
\end{displaymath}

\begin{displaymath}
\ytableausetup
{mathmode,boxsize=1.5em}
T_3=\begin{ytableau}
1 & 2 & 3 & 4 & 6 \\
2 & 3 & 5 & 7 & 7  
\end{ytableau}
\end{displaymath}
Observe that $T_2$ and $T_3$ are semistandard and 2-straight. One can check that $\overline{F}_{T,5}=\overline{F}_{T_1,5}=\overline{F}_{T_2,5}+\overline{F}_{T_3,5}$.
\end{example}

\subsection{Induction step for $a-1=b=d$ case}\label{subsec:span a-1=b=d}
We now consider the case where $a-1=b=d$. The arguments are similar to those of the previous case with $a=b=d$.

\begin{lemma}
    Let $a\geq 2$, and $T\in \Tab_{(a,a-1)}$. Then $\overline{F}_{T,a}$ can be written modulo $\overline{I}^{d+1}$ as
    \[\overline{F}_{T,a}\equiv\sum_{i=1}^m\overline{F}_{T_i,a},\]
    where $T_i\in \Tab_{(a,a)}$ and $T_i^{(d)}\in \Tab_{(a,a)}$ is $2$-straight.
\end{lemma}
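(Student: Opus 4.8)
The plan is to adapt the proof of Lemma~\ref{lem:a=b=d trunc 2-ss} to the present shape $(a,a-1)$, where $d=a-1$ and the last column is a single box (a $1$-minor $x_{\alpha_a}$) rather than a $2$-minor. Write $\overline{F}_{T,a}=\gamma_1\cdots\gamma_{a-1}\,x_{\alpha_a}$ with $\gamma_i=[\alpha_i,\beta_i]$. As in the earlier arguments I may assume that no index occurs more than twice and that $\gamma_i\neq\gamma_j$ for $i\neq j$, since otherwise $\overline{F}_{T,a}=0$ in $\overline{R}$. The subtableau $T^{(a-1)}$ has shape $(a-1,a-1)$, so by the case $a=b=d$ of Theorem~\ref{thm:main span} (proved in \S\ref{subsec:span a=b=d}) it can be $2$-straightened; that is, $\gamma_1\cdots\gamma_{a-1}$ is congruent modulo $\overline{I}^{a}$ to a sum of $2$-straight tableaux of shape $(a-1,a-1)$. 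Multiplying this congruence by $x_{\alpha_a}$ keeps the error term inside $\overline{I}^{a}=\overline{I}^{d+1}$, so $\overline{F}_{T,a}$ is congruent modulo $\overline{I}^{d+1}$ to a sum of tableaux of shape $(a,a-1)$ whose first $a-1$ columns are $2$-straight; these may fail to be semistandard.

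For each such term, the only possible failure of semistandardness is $\alpha'_{a-1}>\alpha_a$, where $[\alpha'_{a-1},\beta'_{a-1}]$ denotes the (possibly new) last $2$-minor. In that case $\alpha_a<\alpha'_{a-1}<\beta'_{a-1}$ are distinct, and I apply the second relation of Proposition~\ref{prop:classical straightening} to column $a-1$ together with the last box, rewriting
\[[\alpha'_{a-1},\beta'_{a-1}]\,x_{\alpha_a}=[\alpha_a,\alpha'_{a-1}]\,x_{\beta'_{a-1}}+[\alpha_a,\beta'_{a-1}]\,x_{\alpha'_{a-1}}\]
in $\overline{R}$ (using that $[i,j]=[j,i]$ in characteristic $2$). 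I then re-$2$-straighten the first $a-1$ columns of each resulting term and iterate the whole procedure.

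The key point, and the step I expect to be the main obstacle, is termination. The monovariant is the index carried by the single last box: each application of the relation above strictly increases it (from $\alpha_a$ to $\beta'_{a-1}$ or $\alpha'_{a-1}$), while re-$2$-straightening the first $a-1$ columns leaves this index untouched, since $2$-straightening only rearranges those columns. Because the index is bounded by the largest value occurring in $T$, the process halts after finitely many steps. In the terminal configuration the last box carries the maximal index, so the row-weak-increase condition $\alpha'_{a-1}\leq\alpha_a$ is automatic and every term is semistandard with $2$-straight first $a-1$ columns, as desired. The details that must be verified are that both terms produced by the relation genuinely have strictly larger box index (immediate from $\alpha_a<\alpha'_{a-1}<\beta'_{a-1}$), that the distinctness hypothesis of Proposition~\ref{prop:classical straightening} always holds in the non-semistandard case (the equality $\alpha_a=\alpha'_{a-1}$ already yields a semistandard term, and $\alpha_a=\beta'_{a-1}$ contradicts $\alpha'_{a-1}>\alpha_a$), and that re-$2$-straightening can never decrease the monovariant.
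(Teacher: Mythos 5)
Your proposal is correct and takes essentially the same route as the paper's proof, which likewise iterates $2$-straightening of the first $a-1$ columns with classical straightening involving the last box and terminates because the index carried by the $a$-th column strictly increases at each classical straightening step --- exactly your monovariant. The only difference is that you make explicit details the paper leaves implicit (the linear relation from Proposition~\ref{prop:classical straightening}, the distinctness check $\alpha_a<\alpha'_{a-1}<\beta'_{a-1}$, and the fact that re-$2$-straightening does not touch the last box), and these all check out.
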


\begin{proof}
    This follows as for the proof of Lemma~\ref{lem:a=b=d trunc 2-ss}. Each time classical straightening must be applied, the index in the $a^\text{th}$ column strictly increases, and so the process of straightening and $2$-straightening must eventually terminate.
\end{proof}

It therefore suffices to show that if $T\in \Tab_{(a,a-1)}$ with $T^{(a-1)}$ already $2$-straight, then $T$ can be $2$-straightened. We further suppose that no index $i$ appears more than twice in $T$. If $T$ is not already $2$-straight, then we have the following cases to consider:
\begin{enumerate}
    \item $\alpha_{a-1}=\alpha_{a}$;
    \item $\beta_{a-2}=\beta_{a-1}$;
    \item $\alpha_i=\alpha_{i+1}$ for some $i\leq a-2$, and $\beta_j=\alpha_{j+2}$ for $j=i,\ldots,a-3$.
\end{enumerate}

In each of these cases, one can adapt the proofs from the $a=b=d$ case (Lemmas~\ref{lem:a=b=d alpha_d-1=alpha_d}, \ref{lem:a=b=d beta_a-2=beta_a-1}, \ref{lem:a=b=d alpha_d-2=alpha_d-1}). The differences are that the box corresponding to $\beta_a$ is removed, and one needs to show that the resulting tableaux all have strictly increasing bottom rows, which holds in all of the cases.

This proves Theorem~\ref{thm:main span} in the case of $a-1=b=d$.

\subsection{Induction step for other cases}\label{subsec:span other cases}
We now consider all other cases of $a\geq b\geq d$. In particular, if $a>b$, then $a-d\geq 2$. The arguments from the previous subsection can be applied here as well to show that if conditions (1), (2), or (4) do not hold, then a semistandard tableau can be $2$-straightened.

The following lemmas (Lemma~\ref{lem:repeat indices in tails} and \ref{lem:repeat indices tails special}) show that if condition (5) does not hold for a semistandard tableau $T$, then $\overline{F}_{T,a}\equiv 0$ in $\overline{I}^d/\overline{I}^{d+1}$.

\begin{lemma}\label{lem:repeat indices in tails}
    Suppose $a>b\geq d$ with $a-d\geq 2$, or $a-2=b-2\geq d$. If $T\in \Tab_{(a+b-d,d)}$ with $\alpha_i=\alpha_{i+1}$ for some $i\geq d+1$, then $\overline{F}_{T,a}\equiv 0$ in $\overline{I}^d/\overline{I}^{d+1}$.
\end{lemma}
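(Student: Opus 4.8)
The plan is to exploit the fact that a repeated index $\alpha_i=\alpha_{i+1}$ with $i\geq d+1$ occurs in the ``tail'' of the first row, i.e., among the columns past the first $d$, which in $F_{T,a}$ contribute only the pure monomial $x_{\alpha_{d+1}}\cdots x_{\alpha_a}\,y_{\alpha_{a+1}}\cdots y_{\alpha_{a+b-d}}$. Here positions $d+1,\ldots,a$ supply $x$-variables and positions $a+1,\ldots,a+b-d$ supply $y$-variables, with the boundary at position $a$. I would divide into cases according to where the pair $(i,i+1)$ lies relative to this boundary: (A) both $\leq a$; (B) $i=a$ (straddling); (C) both $\geq a+1$.

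In the two ``aligned'' cases (A) and (C) the conclusion is immediate. If $d+1\leq i\leq a-1$ then $F_{T,a}$ contains the factor $x_{\alpha_i}x_{\alpha_{i+1}}=x_{\alpha_i}^2$, and if $a+1\leq i\leq a+b-d-1$ it contains $y_{\alpha_i}^2$; in either case this factor is $0$ in $\overline{R}$, so $\overline{F}_{T,a}=0$ and in particular $\overline{F}_{T,a}\equiv 0$ in $\overline{I}^d/\overline{I}^{d+1}$.

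The substantive case, and the main obstacle, is the straddling case $i=a$, where $\alpha_a=\alpha_{a+1}$ is a common index appearing once as the last $x$-variable and once as the first $y$-variable; now the resulting factor $x_{\alpha_a}y_{\alpha_{a+1}}$ is nonzero in $\overline{R}$, so vanishing cannot be read off directly. The key move is to absorb the neighboring $x$-variable $x_{\alpha_{a-1}}$ into a new minor via the relation $\overline{[\alpha_{a-1},\alpha_a]}\,\overline{x_{\alpha_a}}=\overline{x_{\alpha_{a-1}}x_{\alpha_a}y_{\alpha_{a+1}}}$, which holds in $\overline{R}$ because the term $x_{\alpha_a}^2 y_{\alpha_{a-1}}$ in the expansion of $[\alpha_{a-1},\alpha_a]\,x_{\alpha_a}$ is killed by $x_{\alpha_a}^2=0$ (and $y_{\alpha_a}=y_{\alpha_{a+1}}$). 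Substituting this into $F_{T,a}$ rewrites $\overline{F}_{T,a}$ as a product of $d+1$ minors — the original $d$ minors together with $[\alpha_{a-1},\alpha_a]$ — times a monomial, hence as an element of $\overline{I}^{d+1}$, giving $\overline{F}_{T,a}\equiv 0$.

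Two points legitimize this rewriting, and this is exactly where the hypotheses enter. First, $\alpha_{a-1}$ must genuinely be an $x$-variable, i.e., $a-1\geq d+1$; this is the content of $a-d\geq 2$, which holds in both cases of the hypothesis ($a>b\geq d$ with $a-d\geq 2$, and $a-2=b-2\geq d$, i.e.\ $a=b\geq d+2$). Second, $[\alpha_{a-1},\alpha_a]$ must be an honest $2$-minor, requiring $\alpha_{a-1}\neq\alpha_a$; if instead $\alpha_{a-1}=\alpha_a$, then $x_{\alpha_{a-1}}x_{\alpha_a}=x_{\alpha_a}^2=0$ and we are back in case (A) with $\overline{F}_{T,a}=0$. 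The case bookkeeping is routine; the only genuine idea is the single rewriting in the straddling case, so I expect the write-up to be short.
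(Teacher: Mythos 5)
Your proof is correct and follows essentially the same route as the paper: the aligned cases are dispatched by the vanishing of $x_{\alpha_i}^2$ or $y_{\alpha_i}^2$, and in the straddling case $\alpha_a=\alpha_{a+1}$ you complete the monomial to an extra $2$-minor by adding a term killed by a square in $\overline{R}$, placing $\overline{F}_{T,a}$ in $\overline{I}^{d+1}$. The only (inessential) difference is your choice of partner index: you form $[\alpha_{a-1},\alpha_a]$ using the adjacent $x$-variable, whereas the paper forms $[\alpha_{d+1},\alpha_a]$ using the first tail variable --- both are legitimized by the same hypothesis $a-d\geq 2$, which you correctly verify in both cases of the statement.
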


\begin{proof}
    If $i\in\{d+1,\ldots,a-1\}$, then $x_{\alpha_i}^2$ divides $F_{T,a}$, so its image is zero in $\overline{R}$. Similarly, if $i\in\{a+1,\ldots,a+b-d-1\}$, then $y_{\alpha_i}^2$ divides $F_{T,a}$.

    Now suppose $\alpha_a=\alpha_{a+1}$. Then the image of 
    \[x_{\alpha_{d+1}}x_{\alpha_{d+2}}\cdots x_{\alpha_a}y_{\alpha_a}y_{\alpha_{a+2}}\cdots y_{\alpha_{a+b-d}}\]
    is equal to the image of
    \[(x_{\alpha_{d+1}}y_{\alpha_a}+x_{\alpha_a}y_{\alpha_{d+1}})x_{\alpha_{d+2}}\cdots x_{\alpha_a}y_{\alpha_{a+2}}\cdots y_{\alpha_{a+b-d}}.\]
    This latter element is in $I$, since $d+1<a$ and so $[\alpha_{d+1},\alpha_a]$ is indeed a $2$-minor. Thus, $\overline{F}_{T,a}=\overline{F}_{T',a+1}$, where $T'$ is obtained from $T$ by moving a box containing $\alpha_a$ to the end of the second row.
\end{proof}

\begin{lemma}\label{lem:repeat indices tails special}
    Suppose $a-1=b-1=d$. If $T\in \Tab_{(a+1,a-1)}$ with $\alpha_a=\alpha_{a+1}$, then $\overline{F}_{T,a}\in\overline{I}^{d+1}$.
\end{lemma}

\begin{proof}
    We have that
    \[[\alpha_{a-1},\beta_{a-1}]x_{\alpha_a}y_{\alpha_a}=[\alpha_{a-1},\alpha_a][\beta_{a-1},\alpha_a].\]
    Thus, if $S$ is the tableau of shape $(a,a)$ given by
    \begin{displaymath}
        \ytableausetup
        {mathmode,boxsize=2em}
        S = \gamma_1\cdots\gamma_{a-2}\,\,\begin{ytableau}
        \alpha_{a-1} & \beta_{a-1} \\
        \alpha_a & \alpha_a
        \end{ytableau}
        \end{displaymath}
    then we have 
    \[\overline{F}_{T,a}=\overline{F}_{S,a}\in\overline{I}^{d+1}.\]
\end{proof}

The following lemmas show that if condition (3) does not hold (in the cases besides $a-1=b=d$), then the tableau can be $2$-straightened.

\begin{lemma}
    Let $m\geq 1$ and $\gamma\neq\delta\in [n]$. Suppose $T\in \Tab_{(m+3,m+1)}$ is given by
    \begin{displaymath}
    \ytableausetup
    {mathmode,boxsize=2em}
    \begin{ytableau}
    \gamma & \alpha & \alpha & \beta_1 & \beta_2 & \cdots & \cdots & \beta_{m-1} &  \beta_m  \\
    \delta & \beta_1 & \beta_2 & \cdots & \cdots & \cdots & \beta_m 
    \end{ytableau}
    \end{displaymath}
    Then
    \[\overline{F}_{T,m+2}\in\overline{I}^{m+2}.\]
    In particular, if $a-1=b-1=d$ and condition (3) does not hold because $\alpha_i\neq 1$ and such an index $j$ does not exist, then $\overline{F}_{T,a}\equiv0$ in $\overline{I}^d/\overline{I}^{d+1}$.
\end{lemma}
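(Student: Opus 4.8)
The plan is to compute $F_{T,m+2}$ explicitly, recognize its middle block as an instance of Proposition~\ref{prop:main 2-straightening rule}, reduce $\overline{F}_{T,m+2}$ to a product of $[\gamma,\delta]$ with $m+1$ ``diagonal'' factors $x_\tau y_\tau$, and finally convert those diagonals into minors using the characteristic-two identity $x_\tau y_\tau\,[\rho,\sigma]=[\rho,\tau][\sigma,\tau]$ until I have $m+2$ minors in total. Since a product of $m+2$ two-minors lies in $I^{m+2}$, its image lies in $\overline{I}^{m+2}=\overline{I}^{d+1}$.

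First I would read off the standard monomial. With $a=m+2$, the shape $(m+3,m+1)$ forces $d=m+1$, so this is exactly the case $a-1=b-1=d$: columns $1,\dots,m+1$ contribute $2$-minors, while the two extra boxes of the top row contribute one $x$ and one $y$. Matching entries gives
\[
F_{T,m+2}=[\gamma,\delta]\,[\alpha,\beta_1][\alpha,\beta_2]\Bigl(\prod_{k=1}^{m-2}[\beta_k,\beta_{k+2}]\Bigr)\,x_{\beta_{m-1}}y_{\beta_m}.
\]
For $m\geq 2$ the factor $[\alpha,\beta_1][\alpha,\beta_2]\prod_{k=1}^{m-2}[\beta_k,\beta_{k+2}]$ is precisely $F_{S,m}$ for the shape $(m,m)$ tableau $S$ of Proposition~\ref{prop:main 2-straightening rule} with its $\delta,\varepsilon$ taken to be $\beta_{m-1},\beta_m$, so that proposition rewrites it in $\overline{R}$ as $x_\alpha y_\alpha\bigl(\prod_{i=1}^{m-2}x_{\beta_i}y_{\beta_i}\bigr)[\beta_{m-1},\beta_m]$. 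Substituting this and using $x_{\beta_{m-1}}^2=y_{\beta_m}^2=0$ to simplify $[\beta_{m-1},\beta_m]\,x_{\beta_{m-1}}y_{\beta_m}=x_{\beta_{m-1}}x_{\beta_m}y_{\beta_{m-1}}y_{\beta_m}$ collapses everything to
\[
\overline{F}_{T,m+2}=[\gamma,\delta]\,x_\alpha y_\alpha\prod_{i=1}^{m}x_{\beta_i}y_{\beta_i};
\]
the edge case $m=1$ yields the same identity directly from $[\alpha,\beta_1]x_\alpha y_{\beta_1}=x_\alpha y_\alpha x_{\beta_1}y_{\beta_1}$.

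Next I would turn this diagonal form into a product of $m+2$ minors. If the indices $\gamma,\delta,\alpha,\beta_1,\dots,\beta_m$ are not pairwise distinct, some variable is squared and the expression is $0\in\overline{I}^{m+2}$, so I may assume they are distinct. The $m=0$ case of Proposition~\ref{prop:main 2-straightening rule} gives, in $\overline{R}$, the identity $x_\tau y_\tau\,[\rho,\sigma]=[\rho,\tau][\sigma,\tau]$ for distinct $\rho,\sigma,\tau$. Applying it $m+1$ times, absorbing $x_\alpha y_\alpha$ and then each $x_{\beta_i}y_{\beta_i}$ into an already-formed minor whose two indices differ from the index being absorbed, rewrites $\overline{F}_{T,m+2}$ as a product of $1+(m+1)=m+2$ two-minors; the bidegree $(m+2,m+2)$ is consistent with such a product. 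Hence $\overline{F}_{T,m+2}\in\overline{I}^{m+2}$.

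The step needing the most care, and the main obstacle, is the distinctness bookkeeping in the last conversion: each absorption is legitimate only when the chosen minor's two indices avoid $\tau$, so I would record that once the squarefree reduction is made every index occurs exactly once among the diagonal pairs, which guarantees that each application of the identity is valid and that the count reaches exactly $m+2$ rather than stalling. For the ``in particular'' statement I would check that when $a-1=b-1=d$, a failure of condition $(3)$ with the repeated pair at a position $i\neq 1$ and with no admissible $j$ forces the relation $\beta_k=\alpha_{k+2}$ to propagate through the tail, pinning $T$ to the displayed staircase shape; since $d+1=m+2$, the established membership $\overline{F}_{T,a}\in\overline{I}^{m+2}=\overline{I}^{d+1}$ is exactly the assertion $\overline{F}_{T,a}\equiv 0$ in $\overline{I}^d/\overline{I}^{d+1}$.
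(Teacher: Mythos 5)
Your proposal is correct and takes essentially the same route as the paper: both reduce $\overline{F}_{T,m+2}$ via Proposition~\ref{prop:main 2-straightening rule} to the diagonal form $[\gamma,\delta]\,x_\alpha y_\alpha\prod_{i=1}^m x_{\beta_i}y_{\beta_i}$ and then identify this with a product of $m+2$ two-minors, hence an element of $\overline{I}^{m+2}=\overline{I}^{d+1}$. The only cosmetic difference is that the paper produces the $m+2$ minors in one stroke, as $\overline{F}_{S,m+1}$ for the single tableau $S\in\Tab_{(m+2,m+2)}$ with rows $\alpha,\alpha,\beta_1,\ldots,\beta_m$ and $\beta_1,\ldots,\beta_m,\gamma,\delta$, whereas you rebuild them iteratively from the diagonal form by repeated use of the $m=0$ case $x_\tau y_\tau[\rho,\sigma]=[\rho,\tau][\sigma,\tau]$ (with your distinctness bookkeeping, which is sound since the non-distinct cases vanish in $\overline{R}$).
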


\begin{proof}
    Let $S\in \Tab_{(m+2,m+2)}$ be given by
    \begin{displaymath}
    \ytableausetup
    {mathmode,boxsize=2em}
    \begin{ytableau}
    \alpha & \alpha & \beta_1 & \beta_2 & \cdots & \cdots & \beta_{m-1} &  \beta_m  \\
    \beta_1 & \beta_2 & \cdots & \cdots & \cdots & \beta_m & \gamma & \delta
    \end{ytableau}
    \end{displaymath}
    Then, by Proposition~\ref{prop:main 2-straightening rule},
    \[\overline{F}_{T,m+2}=\overline{F}_{S,m+1}=[\gamma,\delta]x_\alpha y_\alpha\prod_{i=1}^m x_{\beta_i}y_{\beta_i}.\]
\end{proof}

\begin{lemma}\label{lem:x gamma index}
    Let $m\geq 1$, and let $T\in\Tab_{m+3,m+1}$ be given by
    \begin{displaymath}
    \ytableausetup
    {mathmode,boxsize=2em}
    \begin{ytableau}
    \alpha & \alpha & \beta_1 & \beta_2 & \cdots & \cdots & \beta_{m-1} &  \beta_m & \delta \\
    \beta_1 & \beta_2 & \cdots & \cdots & \cdots & \beta_m &\gamma
    \end{ytableau}
    \end{displaymath}
    Then 
    \begin{align*}
        \overline{F}_{T,m+3}&=x_\gamma x_\delta\left(x_\alpha y_\alpha\prod_{i=1}^m x_{\beta_i}y_{\beta_i}\right),\\
        \overline{F}_{T,m+2}&=x_\gamma y_\delta\left(x_\alpha y_\alpha\prod_{i=1}^m x_{\beta_i}y_{\beta_i}\right).
    \end{align*}
\end{lemma}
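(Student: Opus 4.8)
The plan is to reduce both identities to Proposition~\ref{prop:main 2-straightening rule} by isolating the first $m+1$ columns of $T$, and then to absorb the remaining two entries of the last column using the relation $x_i^2=0$ in $\overline{R}$. The first observation is that both claimed formulas concern the \emph{same} tableau $T$ of shape $(m+3,m+1)$, so in either case the product of the $d=m+1$ column minors occurring in $F_{T,a}$ is identical; only the tail $x_{\alpha_{d+1}}\cdots x_{\alpha_a}y_{\alpha_{a+1}}\cdots y_{\alpha_{a+b-d}}$ changes. For $a=m+3$ one computes $b=d=m+1$, so the tail is $x_{\beta_m}x_\delta$; for $a=m+2$ one computes $b=m+2=a$ with $a+b-d=m+3$, so the tail is $x_{\beta_m}y_\delta$. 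Thus I would handle the product of minors once and multiply by each tail only at the very end.

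First I would observe that the first $m+1$ columns of $T$ form a tableau $T_0$ of shape $(m+1,m+1)$ whose top row is $\alpha,\alpha,\beta_1,\ldots,\beta_{m-1}$ and whose bottom row is $\beta_1,\ldots,\beta_m,\gamma$. This is precisely the tableau of Proposition~\ref{prop:main 2-straightening rule} for the parameter value $m-1$ in place of $m$, with the roles of $(\delta,\varepsilon)$ there played by $(\beta_m,\gamma)$. Since $F_{T_0,m+1}$ is exactly the product of the $m+1$ column minors occurring in $F_{T,a}$, that proposition yields
\[\overline{F}_{T_0,m+1}=\left(x_\alpha y_\alpha\prod_{i=1}^{m-1}x_{\beta_i}y_{\beta_i}\right)[\beta_m,\gamma]\]
in $\overline{R}$. (When $m=1$ the product over $i$ is empty, and this is the base case $m=0$ of the proposition.)

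It then remains to multiply $\overline{F}_{T_0,m+1}$ by the appropriate tail and simplify. The key computation is
\[[\beta_m,\gamma]\,x_{\beta_m}=x_\gamma x_{\beta_m}y_{\beta_m}\quad\text{in }\overline{R},\]
which follows by expanding $[\beta_m,\gamma]=x_{\beta_m}y_\gamma-x_\gamma y_{\beta_m}$, using $x_{\beta_m}^2=0$ to kill the first term, and noting that the sign of the second term is irrelevant in characteristic $2$. Multiplying by $x_{\beta_m}x_\delta$ (respectively $x_{\beta_m}y_\delta$) and applying this identity converts the trailing factor $[\beta_m,\gamma]$ together with the extra $x_{\beta_m}$ into $x_\gamma x_{\beta_m}y_{\beta_m}$; this completes the product $\prod_{i=1}^{m}x_{\beta_i}y_{\beta_i}$ and leaves behind $x_\gamma x_\delta$ (respectively $x_\gamma y_\delta$), giving exactly the two asserted formulas.

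The argument is essentially mechanical once the reduction is in place, so I do not expect a genuine obstacle. The only point requiring care is the index bookkeeping: correctly matching $T_0$ with the Proposition~\ref{prop:main 2-straightening rule} tableau under the shift $m\mapsto m-1$ and $(\delta,\varepsilon)\mapsto(\beta_m,\gamma)$, and verifying that the single surviving $x_{\beta_m}$ from the tail is precisely what is needed to promote $[\beta_m,\gamma]$ to the symmetric factor $x_{\beta_m}y_{\beta_m}$.
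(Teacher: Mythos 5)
Your proposal is correct and takes essentially the same route as the paper's proof: apply Proposition~\ref{prop:main 2-straightening rule} (with parameter $m-1$ and $(\delta,\varepsilon)$ replaced by $(\beta_m,\gamma)$) to the product of the $m+1$ column minors, then absorb the tails via the identity $[\beta_m,\gamma]\,x_{\beta_m}=x_\gamma x_{\beta_m}y_{\beta_m}$ in $\overline{R}$, which is exactly the pair of relations the paper notes. The paper compresses this to one line; your version merely makes the index bookkeeping and the two tails $x_{\beta_m}x_\delta$ and $x_{\beta_m}y_\delta$ explicit.
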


\begin{proof}
    This follows from Proposition~\ref{prop:main 2-straightening rule}, and noting that
    \[[\beta_m,\gamma]x_{\beta_m}x_\delta=x_{\beta_m}x_\gamma x_\delta y_{\beta_m},\quad [\beta_m,\gamma]x_{\beta_m} y_\delta=x_{\beta_m}x_\gamma y_{\beta_m}y_\delta.\]
\end{proof}

\begin{lemma}
    Suppose $a-d\geq 2$ and $d\geq 1$. Let $T\in \Tab_{(a+b-d,d)}$ with
    \begin{itemize}
        \item $\alpha_1=\alpha_2$;
        \item $\beta_i=\alpha_{i+2}$ for $i=1,\ldots,d-1$;
        \item rows are strictly increasing.
    \end{itemize}
    Then $T$ can be $2$-straightened.
\end{lemma}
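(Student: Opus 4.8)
The plan is to compute $\overline{F}_{T,a}$ explicitly in $\overline{R}$, show that it collapses to a single squarefree monomial, and then recognize that monomial as a $2$-standard monomial modulo $\overline{I}^{d+1}$ after sorting its tail. The key opening observation is that the hypotheses $\alpha_1=\alpha_2$ and $\beta_i=\alpha_{i+2}$ for $1\le i\le d-1$ make the first $d$ columns of $T$ coincide with the tableau of Proposition~\ref{prop:main 2-straightening rule}, with $\delta=\alpha_{d+1}\,(=\beta_{d-1})$ and $\varepsilon=\beta_d$. Thus for $d\ge 2$ (the case $d=1$ being immediate) that proposition gives
\[
\prod_{i=1}^{d}[\alpha_i,\beta_i]=x_{\alpha_1}y_{\alpha_1}\left(\prod_{k=3}^{d}x_{\alpha_k}y_{\alpha_k}\right)[\alpha_{d+1},\beta_d]\qquad\text{in }\overline{R}.
\]

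Next I would multiply by the tail of $F_{T,a}$. Its first factor is $x_{\alpha_{d+1}}$, and since $x_{\alpha_{d+1}}^2=0$ we have $[\alpha_{d+1},\beta_d]\,x_{\alpha_{d+1}}=x_{\alpha_{d+1}}x_{\beta_d}y_{\alpha_{d+1}}$. Carrying out this absorption shows
\[
\overline{F}_{T,a}=\left(\prod_{v\in V}x_v y_v\right)x_{\beta_d}\,x_{\alpha_{d+2}}\cdots x_{\alpha_a}\,y_{\alpha_{a+1}}\cdots y_{\alpha_{a+b-d}}=:M,\qquad V=\{\alpha_1,\alpha_3,\dots,\alpha_{d+1}\},
\]
a product of the $d$ diagonal pairs $x_vy_v$ with the remaining tail variables. (If some index repeats beyond what the hypotheses allow, $M=0$ and $T$ is $2$-straightened trivially.)

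The final step is to straighten $M$. Its non-diagonal indices $\alpha_{d+2}<\cdots<\alpha_a<\alpha_{a+1}<\cdots<\alpha_{a+b-d}$ are already in order, so only $\beta_d$ can be misplaced, and I would split on its position. If $\beta_d$ equals some $y$-tail index $\alpha_{a+j}$, then $\beta_d$ becomes a diagonal index, $M$ gains an extra diagonal pair, and $M\in\overline{I}^{d+1}$, so $\overline{F}_{T,a}\equiv 0$. If $b=d$ or $\beta_d<\alpha_{a+1}$, then $M$ is literally the $2$-standard monomial $G_{S,a}$ of the $2$-semistandard tableau $S$ whose body columns are the equal columns $[v,v]$ $(v\in V)$ and whose tail lists the non-diagonal indices in increasing order; since $a-d\ge 2$ places us in the general case, $S$ lies in the domain of $\Phi_{a,b,d}$ and $T$ is $2$-straightened by $S$. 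Otherwise $\beta_d$ exceeds some $y$-tail indices, and I would bubble it rightward one step at a time: swapping the factors $x_{\beta_d}$ and $y_\gamma$ (with $\gamma<\beta_d$) replaces $M$ by $M'$, where $M+M'=[\gamma,\beta_d]\cdot\big(M/x_{\beta_d}y_\gamma\big)$; after finitely many swaps $M$ becomes the sorted $G_{S,a}$.

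The hard part will be the sorting step — specifically, verifying that each correction $[\gamma,\beta_d]\cdot(M/x_{\beta_d}y_\gamma)$ lies in $\overline{I}^{d+1}$, which amounts to showing that the cofactor $M/x_{\beta_d}y_\gamma$ (a product of $d$ diagonal pairs which, because we are in the bubbling case $b>d$, also carries at least one spare tail variable) lies in $\overline{I}^{d}$, and that the bubbling terminates at a genuine $2$-semistandard tableau. I expect this to follow from the Plücker relations of Proposition~\ref{prop:classical straightening} by reusing a diagonal index to manufacture the extra minor, as in the model identity $x_ix_jx_ky_iy_j=[i,j][i,k]x_j$.
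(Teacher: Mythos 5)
Correct, and essentially the paper's own argument: both proofs use Proposition~\ref{prop:main 2-straightening rule} to collapse $\overline{F}_{T,a}$ to the squarefree monomial $M$ with $d$ diagonal pairs, and then split on where $\beta_d$ sits relative to the tail indices $\alpha_{d+2},\ldots,\alpha_{a+b-d}$ --- your sorted monomial $G_{S,a}$ is exactly the paper's $2$-straight tableau $T'$ (its Case (1)), and your coincidence cases ($\beta_d$ equal to an $x$-tail or $y$-tail index) match its Cases (3) and (4), where the paper cites Lemma~\ref{lem:x gamma index} and the factorization $\overline{F}_{S,a-1}[\alpha_a,\alpha_i]$ for the same conclusions. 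The only real difference is your flagged ``hard part'': instead of bubbling $x_{\beta_d}$ past each $y_\gamma$ and verifying every correction $[\gamma,\beta_d]\cdot\bigl(M/(x_{\beta_d}y_\gamma)\bigr)\in\overline{I}^{d+1}$ separately, the paper performs a single exchange with the smallest $y$-index and packages the correction as $\overline{F}_{T,a}+\overline{F}_{T',a}=\overline{F}_{S,a}$ for a tableau $S$ of shape $(a+b-d-1,d+1)$ obtained by moving the box $\alpha_{a+1}$ to the second row, so membership in $\overline{I}^{d+1}$ is manifest without any cofactor lemma --- though your cofactor claim is indeed true and follows from the chain identity of Proposition~\ref{prop:main 2-straightening rule}, just as your model identity $x_ix_jx_ky_iy_j=\overline{[i,j][i,k]x_j}$ suggests.
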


\begin{proof}
    If $\beta_d<\alpha_{d+2}$, then $T$ is already $2$-straight. If $\beta_d=\alpha_{d+2}$, then $\overline{F}_{T,a}=0$.

    Now suppose $\beta_d>\alpha_{d+2}$. We have the following four cases:
    \begin{enumerate}
        \item $\beta_d<\alpha_{a+1}$, with $\beta_d\neq\alpha_i$ for all $i=d+3,\ldots,a$;
        \item $\beta_d> \alpha_{a+1}$, with $\beta_d\neq \alpha_i$ for all $i=d+3,\ldots,a+1$;
        \item $\beta_d\leq \alpha_a$, with $\beta_d=\alpha_i$ for some $i=d+3,\ldots,a$;
        \item $\beta_d\geq\alpha_{a+1}$, with $\beta_d=\alpha_i$ for some $i=a+1,\ldots,a+b-d$.
    \end{enumerate}

    \noindent\textbf{Case (1)}: Let $T'$ be the tableau obtained from $T$ by removing the box with $\alpha_{d+2}$ from the first row, replacing $\beta_d$ with $\alpha_{d+2}$ in the second row, and putting a box with $\beta_d$ in the first row so that the first row is increasing. Then $T'$ is $2$-straight of the same shape, as we have that $\alpha_{d+2}<\alpha_{d+3}$, and we have
    \[\overline{F}_{T,a}=\overline{F}_{T',a}.\]
    \noindent\textbf{Case (2)}: Let $S$ be the tableau of shape $(a+b-d-1,d+1)$ obtained from $T$ by moving the box with index $\alpha_{a+1}$ to the end of the second row. Let $T'\in\Tab_{(a+b-d,d)}$ be the tableau obtained from $T$ by removing the box with $\alpha_{d+2}$ from the first row, replacing $\beta_d$ with $\alpha_{d+2}$ in the second row, and putting a box with $\beta_d$ in first row so that the first row remains increasing; in particular, the last $b-d$ entries of the first row are $\alpha_{a+2},\ldots,\alpha_{a+b-d},\beta_d$. Then, $T'$ is $2$-straight of the same shape as $T$, and we have that
    \[\overline{F}_{T,a}+\overline{F}_{T',a}=\overline{F}_{S,a}.\]
    Since $\overline{F}_{S,a}\in\overline{I}^{d+1}$ and $\overline{F}_{T',a}\in\overline{I}^d$, this shows that $\overline{F}_{T,a}\equiv\overline{F}_{T',a}$ in $\overline{I}^d/\overline{I}^{d+1}$. Thus, $T$ can be $2$-straightened.

    \noindent\textbf{Case (3)}: By Lemma~\ref{lem:x gamma index} with $\gamma=\beta_d$, we have that $\overline{F}_{T,a}$ is divisible by $x_{\beta_d}^2$, and so it is equal to $0$ in $\overline{R}$.

    \noindent\textbf{Case (4)}: Let $S$ be the tableau of shape $(a+b-d-2,d)$ obtained from $T$ by removing the boxes with indices $\alpha_a$ and $\alpha_i$ from the first row (recall that $\alpha_i=\beta_d$, and $i\geq a+1$). Then
    \begin{align*}
        \overline{F}_{T,a}&=\overline{F}_{S,a-1}[\alpha_a,\alpha_i],
        \end{align*}
        as both are equal to
        \[(x_{\beta_d}x_{\alpha_{d+2}}x_{\alpha_{d+3}}\cdots x_{\alpha_{a-1}})(x_{\alpha_a}y_{\alpha_i})(y_{\alpha_{a+1}}y_{\alpha_{a+2}}\cdots\widehat{y_{\alpha_i}}\cdots y_{\alpha_{a+b-d}})\prod_{i=2}^{d+1}x_{\alpha_i}y_{\alpha_i}\]
        by Lemma~\ref{lem:x gamma index}.

    Since $\overline{F}_{S,a-1}[\alpha_a,\alpha_i]\in\overline{I}^{d+1}$, this shows that $\overline{F}_{T,a}\in\overline{I}^{d+1}$ as well.
\end{proof}

Combining the above results, we have proved Theorem~\ref{thm:main span} for all other possible $a\geq b\geq d$, including the case that $a-1=b-1=d$.

\section{Proof of main result and future directions}\label{sec:main thm}

In this section, we prove Theorem~\ref{thm:main} on the character formulas for $(\overline{I}^d/\overline{I}^{d+1})_{(a,b)}$ for any $a\geq b\geq d$. We first provide some definitions and results that will apply to the $a=b$ case.

\begin{definition}
    Let $a\geq 1$ and $0\leq i\leq a$. Let $T\in \Tab^{(2)}_{(a+i,a-i)}$ be a $2$-semistandard tableau. Define the set of $2$-semistandard tableau $\mathcal{D}_{a,i}$ as follows: $T\in\Tab^{(2)}_{(a+i,a-i)}$ is in $\mathcal{D}_{a,i}$ if and only if there exists an index $j$, for some $1\leq j\leq a-i$, with $\beta_j\neq\alpha_{j+2}$, and $\beta_j>\alpha_{j+2i}$ for the maximal such $j$. 

Define the set $\mathcal{C}_{a,i}$ to be the complement of $\mathcal{D}_{a,i}$ in $\Tab^{(2)}_{(a+i,a-i)}$. Thus, $T\in\mathcal{C}_{a,i}$ if either $\beta_j=\alpha_{j+2i}$ for all $j=1,\ldots,a-i$, or $\beta_j<\alpha_{j+2i}$ for the maximum $j$ where $\beta_j\neq\alpha_{j+2i}$.
\end{definition}

\begin{lemma}\label{lem:property blah bijection}
    Let $a\geq 1$ and $0\leq i\leq a-1$. Then there is a weight-preserving bijection between the sets
    \[\mathcal{D}_{a,i}\simeq \mathcal{C}_{a,i+1}.\]
\end{lemma}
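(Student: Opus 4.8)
The plan is to construct an explicit weight-preserving bijection $\psi\colon\mathcal{D}_{a,i}\to\mathcal{C}_{a,i+1}$ that moves a single box from the second row up to the first. Since $p=2$, recall that a tableau in $\Tab^{(2)}_{(u,v)}$ has strictly increasing rows $\alpha_1<\cdots<\alpha_u$ and $\beta_1<\cdots<\beta_v$ together with $\alpha_j\le\beta_j$ for $1\le j\le v$. Fix $T\in\mathcal{D}_{a,i}$ of shape $(a+i,a-i)$ and let $j_0=\max\{j:\beta_j\ne\alpha_{j+2i}\}$, so that $\beta_{j_0}>\alpha_{j_0+2i}$ while $\beta_j=\alpha_{j+2i}$ for all $j>j_0$. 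The key observation is that $\beta_{j_0}$ fits strictly between two consecutive first-row entries: indeed $\alpha_{j_0+2i}<\beta_{j_0}$ by definition of $j_0$, and when $j_0<a-i$ we have $\beta_{j_0}<\beta_{j_0+1}=\alpha_{j_0+2i+1}$ by the tail relation, while when $j_0=a-i$ the value $\beta_{j_0}>\alpha_{a+i}$ exceeds all first-row entries. I would therefore define $\psi(T)$ by deleting $\beta_{j_0}$ from the second row and inserting it into the first row at position $j_0+2i+1$; this preserves the multiset of entries, hence the weight, and changes the shape to $(a+i+1,a-i-1)$.

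Next I would verify that $\psi(T)$ is a genuine $2$-semistandard tableau lying in $\mathcal{C}_{a,i+1}$. Writing $\alpha'_k,\beta'_k$ for the entries of $\psi(T)$, one has $\alpha'_k=\alpha_k$ for $k\le j_0+2i$, then $\alpha'_{j_0+2i+1}=\beta_{j_0}$, and $\alpha'_k=\alpha_{k-1}$ for $k>j_0+2i+1$, while $\beta'_j=\beta_j$ for $j<j_0$ and $\beta'_j=\beta_{j+1}$ for $j\ge j_0$. The column inequalities $\alpha'_j\le\beta'_j$ follow from the original inequalities together with strict monotonicity of the rows, via a routine case check. For membership in $\mathcal{C}_{a,i+1}$ I would compare $\beta'_j$ with $\alpha'_{j+2i+2}$: for every $j\ge j_0$ the tail relation $\beta_{j+1}=\alpha_{(j+1)+2i}$ forces $\beta'_j=\alpha'_{j+2i+2}$, so these agree, whereas at $j=j_0-1$ one finds $\beta'_{j_0-1}=\beta_{j_0-1}<\beta_{j_0}=\alpha'_{(j_0-1)+2i+2}$. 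Thus either $j_0=1$ and all diagonal comparisons are equalities, or the maximal index of disagreement is exactly $j_0-1$ and the strict inequality there has the correct sign; in both cases $\psi(T)\in\mathcal{C}_{a,i+1}$.

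Finally I would define the inverse $\phi\colon\mathcal{C}_{a,i+1}\to\mathcal{D}_{a,i}$ by the reverse move: given $T'$, set $j^{*}=\max\{j:\beta_j\ne\alpha_{j+2i+2}\}$ (with $j^{*}=0$ if no such index exists), put $j_0=j^{*}+1$, delete $\alpha_{j_0+2i+1}$ from the first row, and reinsert it into the second row at position $j_0$. The defining inequality of $\mathcal{C}_{a,i+1}$ guarantees that the second row remains strictly increasing and that the moved entry exceeds $\alpha_{j_0+2i}$, so the result lands in $\mathcal{D}_{a,i}$; one then checks $\phi\circ\psi=\id$ and $\psi\circ\phi=\id$ directly from the index formulas above. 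The main obstacle is the bookkeeping in this last step: one must carefully track the boundary cases $j_0=1$ (equivalently $j^{*}=0$, the all-equalities case in $\mathcal{C}_{a,i+1}$) and $j_0=a-i$ (the box appended at the very end of the first row), confirm at each stage that the relevant indices lie in range (which is where the hypothesis $i\le a-1$ enters), and verify that the inverse move reproduces the exact insertion position $j_0+2i+1$, so that $\psi$ and $\phi$ are genuinely mutually inverse rather than merely matching cardinalities.
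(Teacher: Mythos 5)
Your proof is correct and takes essentially the same approach as the paper: the paper phrases the map as a swap of the first-row block $\alpha_{j+2i+1},\ldots,\alpha_{a+i}$ with the second-row block $\beta_j,\ldots,\beta_{a-i}$, but since maximality of $j$ forces $\beta_k=\alpha_{k+2i}$ for all $k>j$, that block swap coincides exactly with your single-box move deleting $\beta_{j_0}$ and inserting it at position $j_0+2i+1$, with your uniform insertion formula even subsuming the paper's separate $j=a-i$ case. Your inverse $\phi$ is likewise the paper's $\Psi$, and your checks of $2$-semistandardness and of membership in $\mathcal{C}_{a,i+1}$ merely spell out details the paper leaves to the reader.
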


\begin{proof}
    We first give a weight-preserving map $\Phi:\mathcal{D}_{a,i}\to\mathcal{C}_{a,i+1}$. Let $T\in\mathcal{D}_{a,i}$, with $j$ the maximal index where $\beta_j\neq\alpha_{j+2i}$ (so $\beta_j>\alpha_{j+2i}$). If $j=a-i$, then move the box corresponding to $\beta_j$ to the end of the first row. Else, swap the boxes
    \[\underbrace{\alpha_{j+2i+1},\ldots,\alpha_{a+i}}_{\text{$a-i-j$ boxes}}\leftrightarrow \underbrace{\beta_{j},\ldots,\beta_{a-i}}_{\text{$a-i-j+1$ boxes}}.\]
    Then $\Phi(T)\in\mathcal{C}_{a,i+1}$.

    We now give an inverse map $\Psi:\mathcal{C}_{a,i+1}\to\mathcal{D}_{a,i}$. Let $T\in\mathcal{C}_{a,i+1}$. If $\beta_j=\alpha_{j+2i}$ for $j=1,\ldots,a-i$, then swap the boxes
    \[\underbrace{\alpha_{2i},\ldots,\alpha_{a+i}}_{\text{$a-i+1$ boxes}}\leftrightarrow\underbrace{\beta_1,\ldots,\beta_{a-i}}_{\text{$a-i$ boxes}}.\]
    Else, let $j$ be the maximal index where $\beta_j\neq\alpha_{j+2i}$ (so $\beta_j<\alpha_{j+2i}$), and swap the boxes
    \[\underbrace{\alpha_{j+2i},\ldots,\alpha_{a+i}}_{\text{$a-i-j+1$ boxes}}\leftrightarrow\underbrace{\beta_{j+1},\ldots,\beta_{a-i}}_{\text{$a-i-j$ boxes}}.\]
    Then $\Psi(T)\in\mathcal{C}_{a,i}$. One can check that $\Phi,\Psi$ are inverse to each other.
\end{proof}

\begin{proposition}\label{prop:a=b=d schur poly}
    For $a\geq 1$, let $\mathcal{D}$ denote the set of $2$-semistandard tableaux of size $(a,a)$ that do not have identical rows. Then
    \[\sum_{j=1}^a (-1)^{j-1}s^{(2)}_{(a+j,a-j)}=\sum_{T\in\mathcal{D}} t^T.\]
\end{proposition}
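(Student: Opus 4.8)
The plan is to pass to weight generating functions and collapse the alternating sum by a telescoping argument driven by the bijections of Lemma~\ref{lem:property blah bijection}. By Proposition~\ref{prop:schur poly tab corr}, for each $0\le j\le a$ we have $s^{(2)}_{(a+j,a-j)}=\sum_{T\in\Tab^{(2)}_{(a+j,a-j)}}t^T$. Since $\mathcal{D}_{a,j}$ and $\mathcal{C}_{a,j}$ partition $\Tab^{(2)}_{(a+j,a-j)}$ by definition, setting $D_j=\sum_{T\in\mathcal{D}_{a,j}}t^T$ and $C_j=\sum_{T\in\mathcal{C}_{a,j}}t^T$ gives $s^{(2)}_{(a+j,a-j)}=D_j+C_j$ as an identity of polynomials in $t_1,\ldots,t_n$.

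Next I would feed in the weight-preserving bijections $\mathcal{D}_{a,i}\simeq\mathcal{C}_{a,i+1}$ from Lemma~\ref{lem:property blah bijection}, valid for $0\le i\le a-1$, which at the level of generating functions read $D_i=C_{i+1}$, equivalently $C_j=D_{j-1}$ for $1\le j\le a$. Substituting into the left-hand side and reindexing the second resulting sum by $k=j-1$, the coefficient of $D_j$ for each fixed $1\le j\le a-1$ is $(-1)^{j-1}+(-1)^{j}=0$, so every interior term cancels and only the two endpoints survive:
\[
\sum_{j=1}^a(-1)^{j-1}s^{(2)}_{(a+j,a-j)}=\sum_{j=1}^a(-1)^{j-1}D_j+\sum_{k=0}^{a-1}(-1)^{k}D_k=D_0+(-1)^{a-1}D_a.
\]

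It then remains to evaluate the two boundary terms, which is where I expect the only real work to lie. The term $D_a$ is easy: for $i=a$ the shape is $(2a,0)$ and the index range $1\le j\le a-a$ defining $\mathcal{D}_{a,a}$ is empty, so no witnessing index $j$ exists, $\mathcal{D}_{a,a}=\emptyset$, and $D_a=0$. Hence the alternating sum equals $D_0$, and the crux is to identify $\mathcal{D}_{a,0}$ with $\mathcal{D}$. Here the shape is $(a,a)$ and the comparison $\beta_j>\alpha_{j+2i}$ degenerates to $\beta_j>\alpha_j$; since every $2$-semistandard tableau of shape $(a,a)$ satisfies $\alpha_k\le\beta_k$ for all $k$, a column $k$ is non-repeated exactly when $\alpha_k<\beta_k$. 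I would show that the defining condition of $\mathcal{D}_{a,0}$ holds precisely when some column is non-repeated, i.e.\ precisely when the two rows of $T$ are not identical, which gives $D_0=\sum_{T\in\mathcal{D}}t^T$ and finishes the proof.

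The main obstacle is this last identification $\mathcal{D}_{a,0}=\mathcal{D}$: one must check the $i=0$ boundary of the chain condition carefully, in particular its behavior at the final columns (where $\alpha_{j+2}$ leaves the tableau), and confirm that it collapses exactly to ``the rows are not identical'' rather than to a proper sub- or superset. A clean way to do this is to verify both inclusions directly. If the rows are identical then $\alpha_k=\beta_k$ for all $k$, which forces $\beta_j=\alpha_j$ and hence fails the strict inequality required for membership, so such $T$ lies in $\mathcal{C}_{a,0}$; conversely, a tableau with non-identical rows always exhibits a column with $\alpha_k<\beta_k$ witnessing membership in $\mathcal{D}_{a,0}$. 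This simultaneously shows that $\mathcal{C}_{a,0}$ is exactly the set of tableaux with identical rows, so that $C_0$ accounts for the terms removed from $s^{(2)}_{(a,a)}$, consistent with the $\GL_n$-interpretation in the $a=b=d$ case.
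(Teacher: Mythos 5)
Your proof is correct and takes essentially the same route as the paper's: split each $s^{(2)}_{(a+j,a-j)}$ as $D_j+C_j$ via Proposition~\ref{prop:schur poly tab corr}, feed in $D_i=C_{i+1}$ from Lemma~\ref{lem:property blah bijection} to telescope the alternating sum, and finish by identifying $\mathcal{D}_{a,0}=\mathcal{D}$ and noting $\mathcal{D}_{a,a}=\emptyset$ (equivalently $s^{(2)}_{(2a,0)}=\sum_{T\in\mathcal{C}_{a,a}}t^T$). You merely make explicit the cancellation and the boundary checks that the paper compresses into ``combining these statements gives the desired result.''
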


\begin{proof}
    Observe that $\mathcal{D}$ is exactly equal to $\mathcal{D}_{a,0}$, so $\sum_{T\in\mathcal{D}}t^T=\sum_{T\in\mathcal{C}_{a,1}}t^T$ by Lemma~\ref{lem:property blah bijection}. For any $0\leq i\leq a$, we have
    \[s^{(2)}_{(a+i,a-i)}=\sum_{T\in\mathcal{D}_{a,i}}t^T + \sum_{T\in\mathcal{C}_{a,i}}t^T.\]
    In addition,
    \[s^{(2)}_{2a,0}=\sum_{T\in\mathcal{C}_{a,a}}t^T.\]
    Combining these statements with Lemma~\ref{lem:property blah bijection} gives the desired result.
\end{proof}

\begin{theorem}\label{thm:main}
    Let $a\geq b\geq d$. Then,
    \[\left[\left(\overline{I}^d/\overline{I}^{d+1}\right)_{(a,b)}\right]=\begin{cases}
    \sum_{j=1}^a(-1)^{j-1}s^{(2)}_{(a+j,a-j)}&\quad\text{if $a=b=d$},\\ \\
    s^{(2)}_{(a,a)}+\sum_{j=2}^a (-1)^j s^{(2)}_{(a+j,a-j)}&\quad\text{if $a-1=b-1=d$},\\ \\
    s^{(2)}_{(a+b-d,d)}&\quad\text{otherwise.}
    \end{cases}\]
\end{theorem}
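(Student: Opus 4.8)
The plan is to combine the spanning result of Theorem~\ref{thm:main span} with a single global character count, after first using Propositions~\ref{prop:schur poly tab corr} and~\ref{prop:a=b=d schur poly} to rewrite each claimed right-hand side as an honest nonnegative sum of monomials. Each $2$-standard monomial $G_{T,a}=\overline{F}_{\widetilde{T},a}$ is a $\GL_n$-weight vector: every factor $[\alpha,\beta]$, $x_\gamma$, $y_\delta$ is a weight vector, so $G_{T,a}$ has weight $t^{\widetilde{T}}=t^T$, the swapping map $\Phi_{a,b,d}$ being weight-preserving. Since the $G_{T,a}$ span $(\overline{I}^d/\overline{I}^{d+1})_{(a,b)}$, the dimension of each weight space is bounded by the number of indexing tableaux of that weight, so coefficientwise
\[
\left[(\overline{I}^d/\overline{I}^{d+1})_{(a,b)}\right]\ \leq\ \sum_{T}t^T,
\]
the sum ranging over the indexing set attached to $(a,b,d)$ in \S\ref{sec:basis construction}. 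By Proposition~\ref{prop:schur poly tab corr} this sum is $s^{(2)}_{(a+b-d,d)}$ in the general case; by Proposition~\ref{prop:a=b=d schur poly} it is $\sum_{j=1}^a(-1)^{j-1}s^{(2)}_{(a+j,a-j)}$ when $a=b=d$; and when $a-1=b-1=d$ it is $s^{(2)}_{(a+1,a-1)}+\bigl(s^{(2)}_{(a,a)}-\sum_{j=1}^a(-1)^{j-1}s^{(2)}_{(a+j,a-j)}\bigr)$, which simplifies to $s^{(2)}_{(a,a)}+\sum_{j=2}^a(-1)^js^{(2)}_{(a+j,a-j)}$. Thus each claimed formula equals a concrete nonnegative monomial sum, and the displayed inequality is meaningful coefficientwise.

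Next I would pin down the total via the filtration $\overline{R}_{(a,b)}=\overline{I}^0_{(a,b)}\supseteq\overline{I}^1_{(a,b)}\supseteq\cdots\supseteq\overline{I}^{b+1}_{(a,b)}=0$, where the final vanishing holds because a product of $d$ minors has $y$-degree at least $d$. Additivity of characters on the associated graded gives
\[
\sum_{d=0}^{b}\left[(\overline{I}^d/\overline{I}^{d+1})_{(a,b)}\right]=\left[\overline{R}_{(a,b)}\right]=h_a^{(2)}h_b^{(2)}.
\]
I would then check the matching combinatorial identity $\sum_{d=0}^{b}(\text{claimed formula})=h_a^{(2)}h_b^{(2)}$. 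Using $h_d^{(2)}=e_d$ and $s^{(2)}_{(a+b-d,d)}=s_{(2^d,1^{a+b-2d})}$ from Proposition~\ref{prop:schur poly tab corr}, the dual Pieri rule yields $e_ae_b=\sum_{d=0}^{b}s_{(2^d,1^{a+b-2d})}=\sum_{d=0}^{b}s^{(2)}_{(a+b-d,d)}$. When $a>b$ every $d$ lies in the general case and this is exactly the desired identity. When $a=b$, the $d=a$ and $d=a-1$ summands carry the alternating tails; adding $\sum_{j=2}^a(-1)^js^{(2)}_{(a+j,a-j)}$ and $\sum_{j=1}^a(-1)^{j-1}s^{(2)}_{(a+j,a-j)}$ cancels all terms with $j\geq2$ and leaves $s^{(2)}_{(a,a)}+s^{(2)}_{(a+1,a-1)}$, so the full sum collapses to $\sum_{d=0}^{a}s^{(2)}_{(2a-d,d)}=e_a^2$.

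Finally I would conclude by a squeeze. Writing $P_d=[(\overline{I}^d/\overline{I}^{d+1})_{(a,b)}]$ and $Q_d$ for the claimed formula, the first paragraph gives $P_d\leq Q_d$ coefficientwise, while the two displays give $\sum_d P_d=\sum_d Q_d$. Hence $\sum_d(Q_d-P_d)=0$ with each $Q_d-P_d\geq0$, forcing $P_d=Q_d$ for every $d$. This proves the character formula, and as a byproduct shows that each spanning set $\mathcal{B}_{a,b,d}$ is in fact a basis. The main obstacle I anticipate is the bookkeeping in the $a=b$ case of the middle step: one must correctly identify the indexing sets of cases (2) and (3) (tableaux of shape $(a,a)$ with and without identical rows, together with shape $(a+1,a-1)$), translate them into $p$-truncated Schur polynomials via Propositions~\ref{prop:schur poly tab corr} and~\ref{prop:a=b=d schur poly}, and verify that the signed contributions telescope exactly into the Pieri sum. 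The representation-theoretic input—weights, spanning, and additivity on the filtration—is then routine.
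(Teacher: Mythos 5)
Your proposal is correct and follows essentially the same route as the paper: both arguments combine the spanning result of Theorem~\ref{thm:main span} with the weight-vector property of the $G_{T,a}$ and a counting comparison against $[\overline{R}_{(a,b)}]=h_a^{(2)}h_b^{(2)}=\sum_{d=0}^b s^{(2)}_{(a+b-d,d)}$ over the filtration, using Propositions~\ref{prop:schur poly tab corr} and~\ref{prop:a=b=d schur poly} to translate the indexing sets. The only differences are presentational: you run the squeeze coefficientwise on weight spaces and verify the total identity explicitly via the dual Pieri rule, where the paper compares total dimensions and asserts that identity directly, with both proofs obtaining as the same byproduct that each $\mathcal{B}_{a,b,d}$ is a basis.
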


\begin{proof}
    For any $a\geq b$, we have that $[\overline{R}_{(a,b)}]=\sum_{d=0}^b s^{(2)}_{(a+b-d,d)}$.
    By Proposition~\ref{prop:schur poly tab corr}, this is equal to
    \begin{align*}
        \sum_{d=0}^b\left(\sum_{T\in\Tab^{(2)}_{(a+b-d,d)}}t^T\right).
    \end{align*}
    By Theorem~\ref{thm:main span}, $\mathcal{B}_{a,b}=\bigcup_{d=0}^b\mathcal{B}_{a,b,d}$ spans $\overline{R}_{(a,b)}$. Then,
    \[\dim_{\bk}(\overline{R}_{(a,b)})\leq \#\mathcal{B}_{a,b}\leq \#\left(\bigcup_{d=0}^b\Tab^{(2)}_{(a+b-d,d)}\right)=\dim_{\bk}(\overline{R}_{(a,b)}),\]
    and so we have that $\mathcal{B}_{a,b}$ is indeed a $\bk$-basis for $\overline{R}_{(a,b)}$. By construction, $\mathcal{B}_{a,b,d}\subset\overline{I}^d_{(a,b)}$. Thus, by downwards induction on $d$, we have that the residue classes of elements of $\mathcal{B}_{a,b,d}$ are a $\bk$-basis for $(\overline{I}^d/\overline{I}^{d+1})_{(a,b)}$.
    For any $2$-semistandard tableau $T$, the basis element $G_{T,a}$ is a weight vector with $t^{\text{wt}(G_{T,a})}=t^T$. Then, if $a>b$ or $a-2=b-2\geq d$, we have
    \[\left[\left(\overline{I}^d/\overline{I}^{d+1}\right)_{(a,b)}\right]=\sum_{G_{T,a}\in\mathcal{B}_{a,b,d}}t^{\text{wt}(G_{T,a})}=\sum_{T\in\Tab^{(2)}_{(a+b-d,d)}}t^T=s^{(2)}_{(a+b-d,d)}.\]
    By Proposition~\ref{prop:a=b=d schur poly}, we have the desired character formula for the $a=b=d$ case. Finally, for the $a-1=b-1=d$ case, we see that the character is given by
    \begin{align*}
        \sum_{G_{T,a}\in\mathcal{B}_{a,a,a-1}}t^{\text{wt}(G_{T,a})}&=\sum_{T\in\Tab^{(2)}_{(a+1,a-1)}}t^T+\sum_{T\in\left(\Tab^{(2)}_{(a,a)}\setminus\mathcal{D}\right)}t^T\\
        &=s^{(2)}_{(a+1,a-1)}+s^{(2)}_{(a,a)}-\left(\sum_{j=1}^a (-1)^{j-1}s^{(2)}_{(a+j,a-j)}\right)\\
        &=s^{(2)}_{(a,a)}+\left(\sum_{j=2}^a (-1)^j s^{(2)}_{(a+j,a-j)}\right).
        \end{align*}
\end{proof}

\subsection{Simple modular representations}\label{subsec:irreps dir} 

A direction of future work is to try to find the simple composition factors of $\overline{R}_{(a,b)}$ in the $\char(\bk)=2$ case. For $a>b$, the following proof shows that if $L(\mu)$ is a simple composition factor of $\overline{R}_{(a,b)}$, then it must be $2$-restricted, i.e., $\mu$ is a partition consisting of two columns with different lengths or is a single column.

\begin{proof}[Proof of Conjecture~\ref{conj:grv simples} for $p=2$]
    In characteristic 2, we have that $T_p\,\Sym^a(-)$ is the Schur functor $\bfS_{(1^a)}(-)$. Then by Pieri's rule, $T_p\,\Sym^a(\bk^n)\otimes T_p\,\Sym^b(\bk^n)$ has a filtration with graded pieces $\bfS_\lambda(\bk^n)$, where $\lambda$ is a partition with at most two columns. In particular, every $\lambda$ appearing in the filtration has the form $(a+i,b-i)^\dagger$, where $0\leq i\leq b$. Since $a-b\geq 1$, each $\lambda$ appearing can never have two columns of the same length.

    For a given $\bfS_\lambda(\bk^n)$ appearing in the filtration, any simple composition factor $L(\mu)$ of $\bfS_\lambda(\bk^n)$ must be such that $\mu\leq\lambda$ under the dominance order, as $\lambda$ is the highest weight occurring in $\bfS_\lambda(\bk^n)$. This implies that $\mu$ must also have at most two columns, and they cannot have the same length. Therefore, any $\mu$ for which $L(\mu)$ appears as a composition factor of $T_p\,\Sym^a(\bk^n)\otimes T_p\,\Sym^b(\bk^n)$ must be $2$-restricted.
\end{proof}

By \cite[Theorem 4.4]{wal94}, $s^{(2)}_{(a,b)}$ is the character of the simple $\GL_n$-representation $L((a,b)^\dagger)$ if $(a,b)$ is \emph{Carter}, meaning the highest power of $2$ dividing the hook-length is constant down each column. Walker further showed that if $a>b$ and $(a,b)$ is not Carter, then $s^{(2)}_{(a,b)}$ is not the character of a simple $\GL_n$-representation for $n\gg 0$. Thus, for many $a\geq b\geq d$, one knows that $(\overline{I}^d/\overline{I}^{d+1})_{(a,b)}$ is not simple.

\subsection{Exterior algebras}\label{subsec:exterior}

Cauchy's identity states that the exterior algebra $\bigwedge(\bk^2\boxtimes\bk^n)$ has a filtration by $(\GL_2\times\GL_n)$-representations $\bfS_\lambda(\bk^2)\boxtimes\bfS_{\lambda^\dagger}(\bk^n)$. This identity holds for any $\char(\bk)$, and it gives a decomposition into simple representations if $\char(\bk)=0$.

In this paper, the ring $\overline{R}$ is equal to the exterior algebra since $\char(\bk)=2$. The equality of symmetric polynomials $s^{(2)}_{(a,b)}=s_{(a,b)^\dagger}$ shows that for $a>b$, the characters of our filtration of $\overline{R}_{(a,b)}$ agrees with those of the filtration given by Cauchy's identity. 

It would be interesting to try to extend this line of work to any $\char(\bk)$. Developing standard monomial theory for exterior algebras could lead to interesting algebraic and geometric results, as in the case for classical standard monomial theory and symmetric algebras.

\bibliographystyle{alpha}
\bibliography{ref}

\end{document}